\newdimen\unit\newdimen\psep\newcount\nd\newcount\ndx\newbox\dotb\newbox\ptbox
\newdimen\dx\newdimen\dy\newdimen\dxx\newdimen\dyy\newdimen\hgt
\newdimen\xoff\newdimen\yoff
\newcommand\clap[1]{\hbox to 0pt{\hss{#1}\hss}}
\newcommand\vdisk[1]{{\font\dotf=cmr10 scaled #1\dotf.}}
\newcommand\varline[2]{\setbox\dotb\hbox{\vdisk{#1}}\xoff=-.5\wd\dotb
\wd\dotb=0pt\yoff=-.5\ht\dotb\psep=#2\ht\dotb}
\newcommand\varpt[1]{\setbox\ptbox\clap{\vdisk{#1}}\setbox\ptbox
\hbox{\raise-.5\ht\ptbox\box\ptbox}}
\newcommand\cpt{\copy\ptbox}
\newcommand\point[3]{\rlap{\kern#1\unit\raise#2\unit\hbox{#3}}}
\newcommand\setnd[4]{\dx=#3\unit\advance\dx-#1\unit\divide\dx by\psep
\dy=#4\unit\advance\dy-#2\unit\divide\dy by\psep \multiply\dx
by\dx\multiply\dy by\dy\advance\dx\dy\nd=1\advance\dx-1sp
\loop\ifnum\dx>0\advance\dx-\nd sp\advance\nd1\advance\dx-\nd
sp\repeat}
\newcommand\dl[4]{{\setnd{#1}{#2}{#3}{#4}\dline{#1}{#2}{#3}{#4}\nd}}
\newcommand\dline[5]{{\nd=#5\hgt=#2\unit\dx=#3\unit\advance\dx-#1\unit
\divide\dx by\nd\dy=#4\unit\advance\dy-#2\unit\divide\dy by\nd
\advance\hgt\yoff\rlap{\kern#1\unit\kern\xoff\loop\ifnum\nd>1\advance\nd-1
\advance\hgt\dy\kern\dx\raise\hgt\copy\dotb\repeat}}}
\newcommand\ellipse[4]{\qellip{#1}{#2}{#3}{#4}\qellip{#1}{#2}{#3}{-#4}%
\qellip{#1}{#2}{-#3}{#4}\qellip{#1}{#2}{-#3}{-#4}}
\newcommand\qellip[4]{{\setnd{0}{0}{#3}{#4}\dx=\unit\dy=0pt\raise\yoff\rlap{%
\kern#1\unit\kern\xoff\raise#2\unit\hbox{\loop\ifnum\dx>0\rlap{\kern#3\dx
\raise#4\dy\copy\dotb}\hgt=\dx\divide\hgt
by\nd\advance\dy\hgt\hgt=\dy \divide\hgt
by\nd\advance\dx-\hgt\repeat\rlap{\raise#4\dy\copy\dotb}}}}}
\newcommand\bez[6]{{\setnd{#1}{#2}{#3}{#4}\ndx=\nd\setnd{#3}{#4}{#5}{#6}
\ifnum\ndx>\nd\nd=\ndx\fi\dx=#3\unit\advance\dx-#1\unit\dy=#4\unit
\advance\dy-#2\unit\dxx=#5\unit\advance\dxx-#1\unit\dyy=#6\unit\advance
\dyy-#2\unit\advance\dxx-2\dx\advance\dyy-2\dy\divide\dxx
by\nd\divide\dyy
by\nd\advance\dx.25\dxx\advance\dy.25\dyy\divide\dx
by\nd\divide\dy by\nd \multiply\nd
by2\dx=100\dx\dy=100\dy\dxx=100\dxx\dyy=100\dyy\divide\dxx by\nd
\divide\dyy
by\nd\hgt=#2\unit\raise\yoff\rlap{\kern#1\unit\kern\xoff
\raise\hgt\copy\dotb\loop\ifnum\nd>0\advance\nd-1\advance\hgt0.01\dy
\kern0.01\dx\raise\hgt\copy\dotb\advance\dx\dxx\advance\dy\dyy\repeat}}}
\newcommand\ptu[3]{\point{#1}{#2}{\cpt\raise1ex\clap{$\scriptstyle{#3}$}}}
\newcommand\ptd[3]{\point{#1}{#2}{\cpt\raise-1.8ex\clap{$\scriptstyle{#3}$}}}
\newcommand\ptr[3]{\point{#1}{#2}{\cpt\raise-.4ex\rlap{$\ \scriptstyle{#3}$}}}
\newcommand\ptl[3]{\point{#1}{#2}{\cpt\raise-.4ex\llap{$\scriptstyle{#3}\ $}}}
\newcommand\ptlu[3]{\point{#1}{#2}{\raise.8ex\clap{$\scriptstyle{#3}$}}}
\newcommand\ptld[3]{\point{#1}{#2}{\raise-1.6ex\clap{$\scriptstyle{#3}$}}}
\newcommand\ptlr[3]{\point{#1}{#2}{\raise-.4ex\rlap{$\,\scriptstyle{#3}$}}}
\newcommand\ptll[3]{\point{#1}{#2}{\raise-.4ex\llap{$\scriptstyle{#3}\,$}}}
\newcommand\pt[2]{\point{#1}{#2}{\cpt}}
\newcommand\thnline{\varline{400}{.6}}
\newcommand\dotline{\varline{1000}{4}}
\newtheorem{theorem}                   {Theorem}
\newtheorem{thm}             [theorem] {Theorem}
\newtheorem{lem}             [theorem] {Lemma}
\newtheorem{cor}             [theorem] {Corollary}
\newtheorem{prop}             [theorem] {Proposition}
\newtheorem{prob}            [theorem] {Problem}
\def\lrF{\overset{\textup{\hspace{0.05cm}\tiny$\leftrightarrow$}}{\phantom{\in}}\hspace{-0.34cm}F}
\def\lrG{\overset{\textup{\hspace{0.02cm}\tiny$\leftrightarrow$}}{\phantom{\in}}\hspace{-0.3cm}G}
\def\lrH{\overset{\textup{\hspace{0.05cm}\tiny$\leftrightarrow$}}{\phantom{\in}}\hspace{-0.34cm}H}
\def\lrWn{\overset{\textup{\hspace{0.09cm}\tiny$\leftrightarrow$}}{\phantom{\in}}\hspace{-0.34cm}W_n}
\def\rC{\overset{\textup{\hspace{0.05cm}\tiny$\rightarrow$}}{\phantom{\in}}\hspace{-0.32cm}C}
\def\rP{\overset{\textup{\hspace{0.05cm}\tiny$\rightarrow$}}{\phantom{\in}}\hspace{-0.32cm}P}
\def\lrP{\overset{\textup{\hspace{0.05cm}\tiny$\leftrightarrow$}}{\phantom{\in}}\hspace{-0.32cm}P}
\def\lrK{\overset{\textup{\hspace{0.08cm}\tiny$\leftrightarrow$}}{\phantom{\in}}\hspace{-0.34cm}K}
\def\lrC{\overset{\textup{\hspace{0.05cm}\tiny$\leftrightarrow$}}{\phantom{\in}}\hspace{-0.32cm}C}
\def\lrPten{\overset{\textup{\tiny$\leftrightarrow$}}{\phantom{\in}}\hspace{-0.25cm}{\sf P}_{10}}
\title{Total rainbow connection of digraphs}
\author{
Hui Lei$^1$, Henry Liu$^2$\footnote{Corresponding author}\,\:, Colton Magnant$^3$, Yongtang Shi$^1$\\
\\
\normalsize $^1$Center for Combinatorics and LPMC\\
\normalsize Nankai University, Tianjin 300071, China\\
\normalsize leihui0711@163.com, shi@nankai.edu.cn\\
\\
\normalsize $^2$School of Mathematics\\
\normalsize Sun Yat-sen University, Guangzhou 510275, China\\
\normalsize liaozhx5@mail.sysu.edu.cn\\
\\
\normalsize $^3$Department of Mathematical Sciences\\
\normalsize Georgia Southern University, Statesboro, GA 30460-8093, USA\\
\normalsize cmagnant@georgiasouthern.edu\\
}
\date{2 November 2017}
\begin{document}
\maketitle

\begin{abstract}
\noindent An edge-coloured path is \emph{rainbow} if its edges have distinct colours. For a connected graph $G$, the \emph{rainbow connection number} (resp.~\emph{strong rainbow connection number}) of $G$ is the minimum number of colours required to colour the edges of $G$ so that any two vertices of $G$ are connected by a rainbow path (resp.~rainbow geodesic). These two graph parameters were introduced by Chartrand, Johns, McKeon, and Zhang in 2008. Krivelevich and Yuster generalised this concept to the vertex-coloured setting. Similarly, Liu, Mestre, and Sousa introduced the version which involves total-colourings.

Dorbec, Schiermeyer, Sidorowicz, and Sopena extended the concept of the rainbow connection to digraphs. In this paper, we consider the (strong) total rainbow connection number of digraphs. Results on the (strong) total rainbow connection number of biorientations of graphs, tournaments, and cactus digraphs are presented.
\\
\\
{\bf Keywords:} Total rainbow connection; digraph; tournament; cactus digraph; biorientation
\end{abstract}

\section{Introduction}

All graphs and digraphs considered in this paper are finite and simple. That is, we do not allow the existence of loops, multiple edges (for graphs), and multiple directed arcs (for digraphs). We follow the terminology and notation of Bollob\'as \cite{BB98} for those not defined here.

The concept of rainbow connection in graphs was introduced by Chartrand, Johns, McKeon, and Zhang \cite{CJMZ2008}. An edge-coloured path is \emph{rainbow} if its edges have distinct colours. An edge-colouring of a connected graph $G$ is \emph{rainbow connected} if any two vertices of $G$ are connected by a rainbow path. The \emph{rainbow connection number} of $G$, denoted by $rc(G)$, is the minimum number of colours in a rainbow connected edge-colouring of $G$. An edge-colouring of $G$ is \emph{strongly rainbow connected} if for every pair of vertices $u$ and $v$, there exists a rainbow $u-v$ geodesic, i.e., a path of length equal to the distance between $u$ and $v$. The minimum number of colours in a strongly rainbow connected edge-colouring of $G$ is the \emph{strong rainbow connection number} of $G$, denoted by $src(G)$.

As a natural counterpart to the rainbow connection of edge-coloured graphs, Krivelevich and Yuster \cite{KY2010}; and Li, Mao, and Shi \cite{LMS2012}, proposed the concept of (strong) rainbow vertex-connection. A vertex-coloured path is \emph{vertex-rainbow} if its internal vertices have distinct colours. A vertex-colouring of a connected graph $G$ is \emph{rainbow vertex-connected} (resp.~\emph{strongly rainbow vertex-connected}) if any two vertices of $G$ are connected by a vertex-rainbow path (resp.~geodesic). The \emph{rainbow vertex-connection number} of $G$, denoted by $rvc(G)$, is the minimum number of colours in a rainbow vertex-connected vertex-colouring of $G$. The \emph{strong rainbow vertex-connection number} of $G$, denoted by $srvc(G)$, is the minimum number of colours in a strongly rainbow vertex-connected vertex-colouring of $G$. 
We refer the reader to the survey \cite{LSS2013} and the monograph \cite{LS2012} on the subject of rainbow connection in graphs.

Liu, Mestre, and Sousa \cite{LMS2014}; and Chen, Li, Liu, and Liu \cite{CLLL2016}, proposed the concept of (strong) total rainbow connection. A total-coloured path is \emph{total-rainbow} if its edges and internal vertices have distinct colours. A total-colouring of a connected graph $G$ is \emph{total rainbow connected} (resp.~\emph{strongly total rainbow connected}) if any two vertices are connected by a total-rainbow path (resp.~geodesic). The \emph{total rainbow connection number} of $G$, denoted by $trc(G)$, is the minimum number of colours in a total rainbow connected total-colouring of $G$. The \emph{strong total rainbow connection number} of $G$, denoted by $strc(G)$, is the minimum number of colours in a strongly total rainbow connected total-colouring of $G$.

In \cite{PIE2014}, Dorbec, Schiermeyer, Sidorowicz, and Sopena introduced the concept of rainbow connection of digraphs. A \emph{directed path}, or simply a \emph{path} $P$, is a digraph consisting of a sequence of vertices $v_0,v_1,\dots,v_\ell$ and arcs $v_{i-1}v_i$  for $1\le i\le \ell$. We also say that $P$ is a \emph{$v_0-v_\ell$ path}, and its \emph{length} is the number of arcs $\ell$. A digraph $D$ is \emph{strongly connected} if for any ordered pair of vertices $(u,v)$ in $D$, there exists a $u-v$ path. An arc-coloured path is \emph{rainbow} if its arcs have distinct colours. Let $D$ be a strongly connected digraph. An arc-colouring of $D$ is \emph{rainbow connected} if for any ordered pair of vertices $(u,v)$ in $D$, there is a rainbow $u - v$ path. The \emph{rainbow connection number} of $D$, denoted by $\overset{\rightarrow}{rc}(D)$, is the minimum number of colours in a rainbow connected arc-colouring of $D$. Alva-Samos and Montellano-Ballesteros \cite{AM2016} then introduced the notion of strong rainbow connection of digraphs. An arc-colouring of $D$ is \emph{strongly rainbow connected} if for any ordered pair of vertices $(u, v)$, there is a rainbow $u-v$ geodesic, i.e., a rainbow $u-v$ path of minimum length. The \emph{strong rainbow connection number} of $D$, denoted by $\overset{\rightarrow}{src}(D)$, is the smallest possible number of colours in a strongly rainbow connected arc-colouring of $D$. We have $\textup{diam}(D)\le\overset{\rightarrow}{rc}(D) \le \overset{\rightarrow}{src}(D)$, where $\textup{diam}(D)$ denotes the diameter of $D$. Subsequently, there have been some results on this topic, which considered many different classes of digraphs \cite{AM2017, HMS2014,SS2016}. Very recently, Lei, Li, Liu, and Shi \cite{LLLS2017} introduced the (strong) rainbow vertex-connection of digraphs. A vertex-coloured directed path is \emph{vertex-rainbow} if its internal vertices have distinct colours.  A vertex-colouring of $D$ is \emph{rainbow vertex-connected} (resp.~\emph{strongly rainbow vertex-connected}) if for  any ordered pair of vertices $(u,v)$ in $D$, there exists a vertex-rainbow $u-v$ path (resp.~geodesic). The \emph{rainbow vertex-connection number} of $D$, denoted by $\overset{\rightarrow}{rvc}(D)$, is the minimum number of colours in a rainbow vertex-connected vertex-colouring of $D$. The \emph{strong rainbow vertex-connection number} of $D$, denoted by $\overset{\rightarrow}{srvc}(D)$, is the minimum number of colours in a strongly rainbow vertex-connected vertex-colouring of $D$. We have $\textup{diam}(D)-1\le\overset{\rightarrow}{rvc}(D) \le \overset{\rightarrow}{srvc}(D)$.

In this paper, we introduce the concept of total rainbow connection of digraphs. Let $D$ be a strongly connected digraph. A total-coloured directed path is \emph{total-rainbow} if its arcs and internal vertices have distinct colours. A total-colouring of $D$ is \emph{total rainbow connected} if for any ordered pair of vertices $(u,v)$ in $D$, there exists a total-rainbow $u-v$ path. The \emph{total rainbow connection number} $D$, denoted by $\overset{\rightarrow}{\smash{t}rc}(D)$, is the minimum number of colours in a total rainbow connected total-colouring of $D$. Likewise, a total-colouring of $D$ is \emph{strongly total rainbow connected} if for any ordered pair of vertices $(u,v)$, there exists a total-rainbow $u-v$ geodesic. The \emph{strong total rainbow connection number} of $D$, denoted by $\overset{\rightarrow}{s\smash{t}rc}(D)$, is the minimum number of colours in a strongly total rainbow connected total-colouring of $D$. 

This paper is organised as follows. In Section \ref{gensect}, we present several general results about the parameters $\overset{\rightarrow}{\smash{t}rc}(D)$ and $\overset{\rightarrow}{s\smash{t}rc}(D)$, as well as their relationships to the parameters $\overset{\rightarrow}{rc}(D),\overset{\rightarrow}{src}(D),\overset{\rightarrow}{rvc}(D)$, and $\overset{\rightarrow}{srvc}(D)$. In Section \ref{specsect}, we compute the parameters $\overset{\rightarrow}{\smash{t}rc}(D)$ and $\overset{\rightarrow}{s\smash{t}rc}(D)$ for some specific digraphs $D$. In Section \ref{tournamentssect}, we study the parameters $\overset{\rightarrow}{\smash{t}rc}(T)$ and $\overset{\rightarrow}{s\smash{t}rc}(T)$ for tournaments $T$. Finally in Section \ref{cactussect}, we consider the parameters $\overset{\rightarrow}{rvc}(Q)$ and $\overset{\rightarrow}{\smash{t}rc}(Q)$ for cactus digraphs $Q$.

\section{Definitions, remarks, and results for general digraphs}\label{gensect}

We begin with some definitions about digraphs. For a digraph $D$, its vertex and arc sets are denoted by $V(D)$ and $A(D)$. For an arc $uv\in A(D)$, we say that $v$ is an \emph{out-neighbour} of $u$, and $u$ is an \emph{in-neighbour} of $v$. Moreover, we call $uv$ an \emph{in-arc} of $v$ and an \emph{out-arc} of $u$. We denote the set of \emph{out-neighbours} (resp.~\emph{in-neighbours}) of $u$ in $D$ by $\Gamma^+(u)$ (resp.~$\Gamma^-(u)$). Let $\Gamma[u]=\Gamma^+(u)\cup \Gamma^-(u)\cup\{u\}$. For a strongly connected digraph $D$, and $u,v\in V(D)$, the distance from $u$ to $v$ (i.e., the length of a shortest $u - v$ path) in $D$ is denoted by $d(u,v)$, or $d_D(u,v)$ if we wish to emphasise that the distance is taken in the digraph $D$. Let $\textup{diam}(D)$ denote the diameter of $D$. 

If $uv, vu\in A(D)$, then we say that $uv $ and $vu$ are \emph{symmetric arcs}. If $uv\in A(D)$ and $vu\notin A(D)$, then $uv$ is an {\it asymmetric arc}. The digraph $D$ is an \emph{oriented graph} if every arc of $D$ is asymmetric. A \emph{tournament} is an oriented graph where every two vertices have one asymmetric arc joining them. A \emph{cactus} is a strongly connected oriented graph where each arc belongs to exactly one directed cycle. Given a graph $G$, its \emph{biorientation} is the digraph $\lrG$ obtained by replacing each edge $uv$ of $G$ by the pair of symmetric arcs $uv$ and $vu$. Let $\rP_n$ and $\rC_n$ denote the directed path and directed cycle of order $n$, respectively (where $n\ge 3$ for $\rC_n$), i.e., we may let $V(\rP_n)=V(\rC_n)=\{v_{0},\ldots,v_{n-1}\}$, and $A(\rP_n)=\{v_{0}v_{1}, v_{1}v_{2}\ldots, v_{n-2}v_{n-1}\}$ and $A(\rC_n)=A(\rP_n)\cup v_{n-1}v_{0}$. If $C$ is a directed cycle and $u,v\in V(C)$, we write $uCv$ for the unique $u-v$ directed path in $C$.

For a subset $X\subset V(D)$, we denote by $D[X]$ the subdigraph of $D$ induced by $X$. Given two digraphs $D$ and $H$, and $u\in V(D)$, we define $D_{u\to H}$ to be the digraph obtained from $D$ and $H$ by replacing the vertex $u$ by a copy of $H$, and replacing each arc $xu$ (resp.~$ux$) in $D$ by all the arcs $xv$ (resp.~$vx$) for $v\in V(H)$. We say that $D_{u\to H}$ is \emph{obtained from $D$ by expanding $u$ into $H$}. Note that the digraph obtained from $D$ by expanding every vertex into $H$ is also known as the \emph{lexicographic product $D\circ H$}.

Now, we shall present some remarks and basic results for the total rainbow connection and strong total rainbow connection numbers, for general digraphs and biorientations of graphs. We first note that in a total rainbow connected colouring of a strongly connected digraph $D$, there must be a path between some two vertices with at least $2\,\textup{diam}(D)-1$ colours. Thus, we have the following proposition.

\begin{prop}\label{prop1}
Let $D$ be a strongly connected digraph with $n$ vertices and $m$ arcs. Then
\begin{equation}
2\,\textup{diam}(D)-1 \leq \overset{\rightarrow}{\smash{t}rc}(D)\leq s\overset{\rightarrow}{\smash{t}rc}(D)\leq n+m.\label{eq}
\end{equation}
\end{prop}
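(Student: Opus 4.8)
The plan is to establish the chain of three inequalities in \eqref{eq} separately, working from the outside in. The rightmost inequality will come from an explicit (wasteful) colouring, the middle one is purely definitional, and the leftmost will follow from a counting argument on a diameter-realising path.

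For the lower bound $2\,\textup{diam}(D)-1 \leq \overset{\rightarrow}{\smash{t}rc}(D)$, I would fix any total rainbow connected total-colouring of $D$ and choose an ordered pair $(u,v)$ realising the diameter, so that $d(u,v)=\textup{diam}(D)$. By hypothesis there is a total-rainbow $u-v$ path $P$, and since every $u-v$ path has length at least $\textup{diam}(D)$, the path $P$ has some length $\ell\geq\textup{diam}(D)$. A directed path of length $\ell$ has exactly $\ell$ arcs and $\ell-1$ internal vertices, and total-rainbowness forces all of these $2\ell-1$ elements to receive pairwise distinct colours. Hence the colouring uses at least $2\ell-1\geq 2\,\textup{diam}(D)-1$ colours, which gives the bound.

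The middle inequality $\overset{\rightarrow}{\smash{t}rc}(D)\leq s\overset{\rightarrow}{\smash{t}rc}(D)$ is immediate: a geodesic is in particular a path, so any strongly total rainbow connected colouring is also total rainbow connected, and therefore the minimum number of colours for the weaker requirement cannot exceed that for the stronger one. For the upper bound $s\overset{\rightarrow}{\smash{t}rc}(D)\leq n+m$, I would simply assign $n+m$ distinct colours to the $n$ vertices and $m$ arcs of $D$, one colour per element. Under this colouring every directed path has all of its arcs and internal vertices coloured distinctly, so in particular every geodesic is total-rainbow; the colouring is strongly total rainbow connected, yielding the claim.

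None of the three steps presents a serious obstacle; the only point requiring care is the bookkeeping in the lower bound, namely correctly recording that a path of length $\ell$ contributes $\ell$ arc-colours together with $\ell-1$ internal-vertex-colours (and \emph{not} $\ell+1$ vertex-colours), so that the total is $2\ell-1$ rather than $2\ell+1$. I would sanity-check this against the two extreme small cases $\ell=1$ (a single arc, no internal vertices, one colour) and $\ell=2$ (two arcs, one internal vertex, three colours).
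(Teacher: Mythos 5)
Your proof is correct and follows essentially the same route as the paper, which justifies the proposition by the same observation: a total-rainbow path between a diameter-realising pair has length $\ell\ge\textup{diam}(D)$ and hence forces $2\ell-1\ge 2\,\textup{diam}(D)-1$ distinct colours, while the middle and right inequalities are definitional and follow from the all-distinct colouring, respectively. Your explicit bookkeeping ($\ell$ arcs plus $\ell-1$ internal vertices) is exactly the count the paper leaves implicit.
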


It is easy to see that the bioriented paths $\lrP_n$, for $n\geq2$, form an infinite family of graphs where we have equalities in the first two inequalities in (\ref{eq}). Also, it is not difficult to see that for every directed cycle $\rC_n$ with $n\geq5$, we have equalities in the last two inequalities in (\ref{eq}). These two results will be included in Theorems \ref{thm2} and \ref{thm3}.

In the next result, we give equivalences and implications between several conditions, when the rainbow connection parameters are small.

\begin{thm}\label{pro2}
Let $D$ be a non-trivial, strongly connected digraph.
\begin{enumerate}
\item[(a)] The following are equivalent.
\begin{enumerate}
\item[(i)] $D$ is a bioriented complete graph.
\item[(ii)] \textup{diam}$(D)=1$.
\item[(iii)] $\overset{\rightarrow}{rc}(D)=1$.
\item[(iv)] $\overset{\rightarrow}{src}(D)=1$.
\item[(v)] $\overset{\rightarrow}{rvc}(D)=0$.
\item[(vi)] $\overset{\rightarrow}{srvc}(D)=0$.
\item[(vii)] $\overset{\rightarrow}{\smash{t}rc}(D)=1$.
\item[(viii)] $\overset{\rightarrow}{s\smash{t}rc}(D)=1$.
\end{enumerate}
\item[(b)] $\overset{\rightarrow}{s\smash{t}rc}(D)\ge\overset{\rightarrow}{\smash{t}rc}(D)\ge 3$ if and only if $D$ is not a bioriented complete graph.
\item[(c)] 
\begin{enumerate}
\item[(i)] $\overset{\rightarrow}{rc}(D)=2$ if and only if $\overset{\rightarrow}{src}(D)=2$.
\item[(ii)] $\overset{\rightarrow}{rvc}(D) = 1$, if and only if $\overset{\rightarrow}{srvc}(D) = 1$, if and only if \textup{diam}$(D)=2$. 
\item[(iii)] $\overset{\rightarrow}{rvc}(D) = 2$ if and only if $\overset{\rightarrow}{srvc}(D) = 2$.
\item[(iv)] $\overset{\rightarrow}{\smash{t}rc}(D)=3$ if and only if $\overset{\rightarrow}{s\smash{t}rc}(D)=3$.
\item[(v)] $\overset{\rightarrow}{\smash{t}rc}(D)=4$ if and only if $\overset{\rightarrow}{s\smash{t}rc}(D)=4$.
\end{enumerate}
Moreover, any of the conditions in (i) implies any of the conditions in (iv), and any of the conditions in (i), (iv) and (v) implies any of the conditions in (ii).
\end{enumerate}
\end{thm}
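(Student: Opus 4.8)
The plan is to prove Theorem~\ref{pro2} by handling the three parts in order, exploiting the equivalences already available and reducing each new claim to a statement about the structure of $D$ near its extremal vertices.

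\textbf{Part (a).} I would prove a cycle of implications through the geometric condition $\mathrm{diam}(D)=1$. The equivalence (i)$\Leftrightarrow$(ii) is immediate: $\mathrm{diam}(D)=1$ means every ordered pair $(u,v)$ has a $u-v$ arc, which for a simple digraph forces both $uv$ and $vu$ to be present for all pairs, i.e.\ $D$ is the biorientation of a complete graph. For the remaining conditions I would observe that each lower bound in Proposition~\ref{prop1} and in the displayed bounds $\mathrm{diam}(D)\le\overset{\rightarrow}{rc}(D)\le\overset{\rightarrow}{src}(D)$ and $\mathrm{diam}(D)-1\le\overset{\rightarrow}{rvc}(D)\le\overset{\rightarrow}{srvc}(D)$ pins the parameter to its minimum exactly when $\mathrm{diam}(D)=1$. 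Concretely, if $\mathrm{diam}(D)=1$ then colouring every arc (and every vertex) with a single colour makes every length-$1$ path trivially rainbow, giving the upper direction for (iii),(iv),(vii),(viii); and the vertex parameters are $0$ because no path has an internal vertex to recolour. Conversely each of (iii)--(viii) forces $\mathrm{diam}(D)=1$: for the edge-parameters this is the bound $\mathrm{diam}(D)\le\overset{\rightarrow}{rc}(D)$, and for $\overset{\rightarrow}{rvc}(D)=0$ one notes that a colouring with zero vertex colours can only rainbow-connect pairs whose geodesics have no internal vertices, i.e.\ pairs at distance~$1$. So all eight conditions are equivalent.

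\textbf{Part (b).} This follows almost directly from part~(a) together with Proposition~\ref{prop1}. The inequality $\overset{\rightarrow}{s\smash{t}rc}(D)\ge\overset{\rightarrow}{\smash{t}rc}(D)$ is the middle inequality of~(\ref{eq}). If $D$ is not a bioriented complete graph, then by part~(a) we have $\overset{\rightarrow}{\smash{t}rc}(D)\neq 1$; but I must rule out the value $2$ as well. Here I would argue that any total-rainbow $u-v$ path of length~$2$ uses two arcs and one internal vertex, all needing distinct colours, so a path realising a distance-$2$ pair already requires $3$ colours; since $D\neq\lrK$ forces $\mathrm{diam}(D)\ge2$ by part~(a), some pair is at distance $\ge2$ and hence $\overset{\rightarrow}{\smash{t}rc}(D)\ge 2\,\mathrm{diam}(D)-1\ge 3$ by Proposition~\ref{prop1}. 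The reverse direction is the contrapositive of part~(a)(vii).

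\textbf{Part (c).} The five biconditionals each assert that a ``weak'' rainbow parameter equals a small value iff its ``strong'' counterpart does; the nontrivial direction is always showing that the weak condition forces the strong one. My approach is: from the hypothesis that the weak parameter is small, deduce a strong diameter bound, and then exhibit an explicit colouring under which \emph{every} geodesic is rainbow. For (c)(ii) this is cleanest, as all three conditions are equivalent to $\mathrm{diam}(D)=2$ via the bounds $\mathrm{diam}(D)-1\le\overset{\rightarrow}{rvc}(D)\le\overset{\rightarrow}{srvc}(D)$: a single vertex colour suffices exactly when every geodesic has at most one internal vertex, i.e.\ length $\le2$. For (c)(i),(iii),(iv),(v) I would argue that when the parameter value is this small, the relevant distances are forced to be tiny (diameter $2$ or $3$), so that the number of \emph{distinct} geodesics between any pair, and the number of arcs/vertices they involve, is bounded; one then checks that the colouring witnessing the weak bound can be taken to be \emph{canonical} enough that geodesics are automatically rainbow, yielding the strong bound with the same number of colours. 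The final ``Moreover'' implications ((i)$\Rightarrow$(iv), and (i),(iv),(v)$\Rightarrow$(ii)) I would read off by comparing the forced diameters: the $\overset{\rightarrow}{\smash{t}rc}=3$ or $4$ conditions force $\mathrm{diam}(D)=2$ via Proposition~\ref{prop1}'s lower bound $2\,\mathrm{diam}(D)-1$, which is precisely condition (ii), and $\overset{\rightarrow}{rc}(D)=2$ forces $\mathrm{diam}(D)=2$ as well, from which the total-rainbow colouring can be built using one extra colour for the single internal vertex.

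\textbf{Main obstacle.} The genuinely delicate part is the forward direction of (c)(i),(iii),(iv),(v): knowing that some (possibly cleverly chosen) colouring rainbow-connects $D$ with few colours does not \emph{a priori} tell us that geodesics are rainbow. The crux will be a structural lemma showing that when the parameter is pinned to its small value, the extremal colourings are rigid enough—forced by the scarcity of colours relative to path lengths—that one may assume the connecting paths are themselves geodesics, or may recolour to make them so without increasing the palette. I expect to spend most of the effort establishing this rigidity case-by-case on the small diameter values, and in carefully ordering the implications so that the already-proven parts (a) and (b) and the earlier graph-theoretic biconditionals can be invoked to shortcut the harder total-rainbow cases.
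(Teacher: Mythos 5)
Your parts (a) and (b) are fine and essentially coincide with the paper's argument: (i)$\Leftrightarrow$(ii) is elementary, the small values are pinned by the lower bounds $\textup{diam}(D)\le\overset{\rightarrow}{rc}(D)$, $\textup{diam}(D)-1\le\overset{\rightarrow}{rvc}(D)$, and $2\,\textup{diam}(D)-1\le\overset{\rightarrow}{\smash{t}rc}(D)$, and (b) follows from (a) together with Proposition \ref{prop1}. The problem is part (c), which is where the new content of the theorem lies (the paper simply quotes (c)(i)--(iii) from earlier work and proves (c)(iv),(v)). For the directions $\overset{\rightarrow}{\smash{t}rc}(D)=3\Rightarrow\overset{\rightarrow}{s\smash{t}rc}(D)=3$ and $\overset{\rightarrow}{\smash{t}rc}(D)=4\Rightarrow\overset{\rightarrow}{s\smash{t}rc}(D)=4$ you defer to a ``structural rigidity'' lemma --- that extremal colourings can be assumed, or recoloured, so that geodesics become rainbow --- and you explicitly flag this lemma as the main unresolved obstacle. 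That lemma is never proved in your proposal, and it is also the wrong target: no rigidity of the colouring and no recolouring are needed at all.

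The missing observation is a one-line count. A total-rainbow directed path of length $\ell$ has $\ell$ arcs and $\ell-1$ internal vertices, all of distinct colours, so it uses $2\ell-1$ colours; hence under \emph{any} total-colouring with at most $4$ colours, every total-rainbow path has length at most $2$. Consequently, if $c$ is an arbitrary total rainbow connected colouring of $D$ with $3$ (or $4$) colours, then for each ordered pair $(x,y)$ either $xy\in A(D)$ (and the arc $xy$ is a total-rainbow geodesic), or $xy\notin A(D)$, in which case $d(x,y)\ge 2$ and the guaranteed total-rainbow $x-y$ path has length exactly $2$, so it \emph{is} a geodesic. Thus the same colouring $c$ witnesses $\overset{\rightarrow}{s\smash{t}rc}(D)\le 3$ (resp.\ $\le 4$); the connecting paths are automatically geodesics, rather than geodesics being made rainbow. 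The reverse directions then follow as you and the paper both note, from (a), (b) and $\overset{\rightarrow}{\smash{t}rc}\le\overset{\rightarrow}{s\smash{t}rc}$ (for (v) one also invokes the already-proved (iv) to exclude $\overset{\rightarrow}{\smash{t}rc}(D)=3$). Interestingly, you do use exactly this counting idea implicitly in your ``Moreover'' paragraph, where a rainbow connected arc-colouring with $2$ colours plus one vertex colour is claimed to be total rainbow connected because the paths have a single internal vertex; had you applied the same reasoning to (c)(iv),(v), the ``main obstacle'' you describe would have disappeared. As written, however, the proposal leaves the central implications of (c) resting on an unproved and misdirected lemma, so it is not a complete proof.
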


\begin{proof}
(a) In \cite{AM2016}, Theorem 2; and \cite{LLLS2017}, Theorem 2, it was proved that (i) to (vi) are equivalent. Now clearly, we have (i) $\Rightarrow$ (viii), and by (\ref{eq}), it is easy to see that (viii) $\Rightarrow$ (vii) $\Rightarrow$ (ii).\\[1ex]
\indent (b) If $D$ is not a bioriented complete graph, then $\textup{diam}(D)\ge 2$, and $\overset{\rightarrow}{s\smash{t}rc}(D)\ge\overset{\rightarrow}{\smash{t}rc}(D)\ge 3$ follows from (\ref{eq}). The converse clearly holds by (a).\\[1ex]
\indent (c) Part (i) was proved in \cite{AM2016}, Theorem 2; and parts (ii) and (iii) were proved in \cite{LLLS2017}, Theorem 2. We prove parts (iv) and (v). Suppose first that $\overset{\rightarrow}{\smash{t}rc}(D) = 3$. Then (\ref{eq}) implies $\overset{\rightarrow}{s\smash{t}rc}(D) \ge 3$. Also, there exists a total rainbow connected colouring for $D$, using 
$\overset{\rightarrow}{\smash{t}rc}(D) = 3$ colours. In such a total-colouring, for any $x,y\in V(D)$, either $xy\in A(D)$, or $xy\not\in A(D)$ and there is a total-rainbow $x-y$ path of length $2$, which is also a total-rainbow $x-y$ geodesic. Thus $\overset{\rightarrow}{s\smash{t}rc}(D) \le 3$, so that $\overset{\rightarrow}{s\smash{t}rc}(D) = 3$, and the first condition of (iv) implies the second. The same argument, with the role of ``$3$'' being replaced by ``$4$'', gives that the first condition of (v) implies the second.

Next, suppose that $\overset{\rightarrow}{s\smash{t}rc}(D) = 3$. Then by (a), we know that $D$ is not a bioriented complete graph. By (b), we have $3\le \overset{\rightarrow}{\smash{t}rc}(D)\le\overset{\rightarrow}{s\smash{t}rc}(D)=3$, and hence $\overset{\rightarrow}{\smash{t}rc}(D) = 3$, and (iv) is proved. Finally, suppose that $\overset{\rightarrow}{s\smash{t}rc}(D) = 4$. By (a) and (b), we have $3\le \overset{\rightarrow}{\smash{t}rc}(D)\le \overset{\rightarrow}{s\smash{t}rc}(D) = 4$. By (c)(iv), we have $\overset{\rightarrow}{\smash{t}rc}(D) \neq 3$, so that $\overset{\rightarrow}{\smash{t}rc}(D) = 4$. This completes the proof of (v).

Now, we consider the final part of (c). Firstly, suppose that either condition in (i) holds, so that $\overset{\rightarrow}{rc}(D)=2$. Then (a) implies $\textup{diam}(D)\ge 2$, and (\ref{eq}) implies $\overset{\rightarrow}{\smash{t}rc}(D) \ge 3$. Moreover, there exists a rainbow connected arc-colouring for $D$, using $\overset{\rightarrow}{rc}(D) = 2$ colours. Clearly by colouring all vertices of $D$ with a third colour, we have a total rainbow connected colouring for $D$, using $3$ colours. Thus, $\overset{\rightarrow}{\smash{t}rc}(D) \le 3$. We have $\overset{\rightarrow}{\smash{t}rc}(D) = 3$, and thus both conditions of (iv) hold. Secondly, suppose that either of the conditions in (i) holds. Then we have $\textup{diam}(D)\le \overset{\rightarrow}{rc}(D)=2$, and (a) implies that $\textup{diam}(D) = 2$. Similarly, if any of the conditions in (iv) or (v) holds, then it is easy to use (\ref{eq}) and (a) to again obtain $\textup{diam}(D) = 2$. Thus, any of the conditions in (i), (iv) and (v) implies any of the conditions in (ii).
\end{proof}

We remark that in Theorem \ref{pro2}(c), no other implication exists between the conditions of (i) to (v). Obviously, the conditions of (ii) and those of (iii) are mutually exclusive. Thus by the last part of (c), the conditions of (iii) are mutually exclusive to those of (i), (iv) and (v). Similarly, the conditions of (iv) and those of (v) are mutually exclusive, and thus the conditions of (i) and those of (v) are also mutually exclusive, since the conditions of (i) imply those of (iv). Clearly, the example of the bioriented stars $\lrK_{1,n}$ shows that there are infinitely many digraphs where the conditions of (ii) hold, but those of (v) do not hold. Indeed, for $n\ge 2$, we have $\overset{\rightarrow}{rvc}(\lrK_{1,n})=1$, while $\overset{\rightarrow}{\smash{t}rc}(\lrK_{1,n})=3$. Also, there are infinitely many digraphs $D$ such that the conditions of (ii) and (iv) hold, but those of (i) do not hold. For example, let $u$ be a vertex of the directed cycle $\rC_3$, and let $D$ be a digraph obtained by expanding $u$ into a bioriented clique $K$. That is, $D=(\rC_3)_{u\to K}$. Then, we have $\overset{\rightarrow}{rvc}(D)=1$ and $\overset{\rightarrow}{\smash{t}rc}(D)=3$, but $\overset{\rightarrow}{rc}(D)=3$. In the following lemma, we will see that there are infinitely many examples of digraphs $D$ where the conditions of (ii) hold, but the conditions of (iv) do not hold.

\begin{lem}\label{petersenlem}
There exist infinitely many digraphs $D$ with $\textup{diam}(D)=2$ and $\overset{\rightarrow}{\smash{t}rc}(D)=\overset{\rightarrow}{s\smash{t}rc}(D)=4$.
\end{lem}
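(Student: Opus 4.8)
The plan is to realise the family as biorientations of the Petersen graph with one vertex expanded into a bioriented clique. Write $P$ for the Petersen graph and $\lrPten$ for its biorientation, fix a vertex $u$, and for $t\ge 1$ set $D_t=(\lrPten)_{u\to K_t}$, where $K_t$ is the bioriented complete graph on $t$ vertices. These have $9+t$ vertices, hence are pairwise non-isomorphic, so it suffices to prove $\textup{diam}(D_t)=2$ and $\overset{\rightarrow}{\smash{t}rc}(D_t)=\overset{\rightarrow}{s\smash{t}rc}(D_t)=4$ for all $t$ (the case $t=1$ being $\lrPten$ itself). Since $P$ has diameter $2$ and the expansion only introduces clique vertices duplicating the adjacencies of $u$, no distance can increase while non-adjacent original vertices still witness distance $2$; thus $\textup{diam}(D_t)=2$, and Proposition \ref{prop1} already gives $\overset{\rightarrow}{\smash{t}rc}(D_t)\ge 3$.

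For the upper bound I would exploit that $P$ has chromatic index $4$. Fix a proper $4$-edge-colouring $\phi$ of $P$; colour both arcs of each edge $xy$ with $\phi(xy)$, colour both arcs between each clique vertex and each neighbour $w\in N_P(u)$ with $\phi(uw)$, and colour each original vertex $w\ne u$ with the unique colour of $\{1,2,3,4\}$ absent from the three edges at $w$, each clique vertex receiving the colour missing at $u$; the few clique-internal arcs may be coloured arbitrarily. Because $P$ is triangle-free with $\mu=1$, every ordered distance-$2$ pair is joined by length-$2$ geodesics $x\to w\to y$ through common neighbours $w$, and along any such path the arc-colours $\phi(xw),\phi(wy)$ are distinct by properness at $w$ and both differ from $c(w)$ by the choice of the missing colour; pairs of neighbours of $u$ route through any clique vertex in the same way. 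Hence every geodesic used is total-rainbow, giving $\overset{\rightarrow}{s\smash{t}rc}(D_t)\le 4$.

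The heart of the matter is the lower bound $\overset{\rightarrow}{\smash{t}rc}(D_t)\ge 4$. Suppose a total rainbow connected $3$-colouring existed. A total-rainbow path of length $\ell$ carries $2\ell-1$ distinctly coloured elements, so $\ell\le 2$, and every distance-$2$ pair must be joined by a total-rainbow length-$2$ path through a common neighbour. I would first show that every original vertex $w\ne u$ satisfies: all out-arcs at $w$ share one colour $o_w$, all in-arcs share a second colour $i_w$, and $c(w)$ is the third. This uses that for a non-adjacent pair inside $N_P(w)$ the strong regularity ($\lambda=0$, $\mu=1$) forces a unique common neighbour, so the corresponding length-$2$ paths through $w$ are total-rainbow; a pigeonhole over three colours then collapses the arc-colours as claimed. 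Each original edge $xy$ then yields $o_x=i_y$ and $o_y=i_x$, so $x$ and $y$ share the pair $\{o_x,o_y\}$ and hence $c(x)=c(y)$; as $P-u$ is connected, $c$ is constant on the nine original vertices and $o$ is a proper $2$-colouring of $P-u$. But $P-u$ still contains a $5$-cycle, so it is not bipartite, a contradiction. Combining the two bounds gives $\overset{\rightarrow}{\smash{t}rc}(D_t)=\overset{\rightarrow}{s\smash{t}rc}(D_t)=4$.

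I expect the main obstacle to be propagating this local rigidity to the three neighbours of $u$: after the expansion their neighbourhoods contain the whole clique, and pairs of the form (neighbour of $u$, neighbour of $u$) now have $t$ common neighbours, so they no longer force anything through a single vertex. The fix is to isolate the pairs that \emph{do} still route uniquely, namely the original pairs inside $N_P(w)$ together with the pairs $(x,u_i)$ for $x\in N_P(w)$, and to check these indeed have their unique common neighbour at $w$; this secures the monochromaticity $o_w,i_w$ even on the arcs joining $w$ to the clique. Once this finite verification on the neighbourhoods of $u$ in $P$ is in place, the non-bipartiteness of $P-u$ closes the contradiction uniformly in $t$, and the residual freedom at the clique vertices is irrelevant since the contradiction already lives on the nine original vertices.
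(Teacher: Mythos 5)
Your proposal is correct, and although it uses the same construction as the paper (the bioriented Petersen graph with one vertex expanded into a bioriented clique), both bounds are executed by genuinely different means. For the lower bound, the paper runs the same three-colour pigeonhole you use, but only at the single vertex $u_1$, and then propagates the forced colours around the outer pentagon $u_0u_1u_2u_3u_4$ until the pair $u_1,u_4$ admits no total-rainbow path; your version establishes the in-arc/out-arc monochromaticity at \emph{every} original vertex and then invokes non-bipartiteness of ${\sf P}_{10}-u$. This is the same parity obstruction (the paper's propagation around the odd outer cycle is exactly a proper $2$-colouring failing on a $5$-cycle), but your global formulation has a concrete advantage: it handles the clique expansion uniformly. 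The paper merely asserts that ``we may again apply the first argument above'' to $D_n$, whereas you explicitly isolate the pairs $(x,u_i)$ with $x\in N_P(w)$, $w\in N_P(u)$, that still route uniquely through $w$ after the expansion — which is precisely the point that needs checking, since pairs inside $N_P(u)$ acquire $t$ common neighbours and force nothing. For the upper bound, the paper exhibits an ad hoc strongly total rainbow connected colouring (colours $1,2,3$ on the two pentagons, colour $4$ on the spokes, Figure 1(b)) and leaves verification to the reader; your colouring via a proper $4$-edge-colouring $\phi$ of ${\sf P}_{10}$ (which exists as ${\sf P}_{10}$ is class 2 and cubic) together with the ``missing colour'' at each vertex is more conceptual: properness of $\phi$ at the middle vertex of any length-$2$ geodesic, plus the choice of the missing colour there, makes total-rainbowness automatic, and the scheme extends to the clique vertices with no extra work since clique-internal arcs never occur on the required geodesics. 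Both routes are sound; yours trades the paper's explicit figure for an invariant-based verification that is easier to check and scales transparently to the whole infinite family.
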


\begin{proof}
We will construct the digraphs $D$ from the Petersen graph ${\sf P}_{10}$. We first consider the digraph $\lrPten$. See Figure 1(a), where we have $V({\sf P}_{10})=V(\lrPten)=\{u_i, v_i:0\le i\le 4\}$. Clearly, we have $\textup{diam}(\lrPten)=2$. We will show that $\overset{\rightarrow}{\smash{t}rc}(\lrPten)=\overset{\rightarrow}{s\smash{t}rc}(\lrPten)=4$. By (\ref{eq}), it suffices to prove that $\overset{\rightarrow}{\smash{t}rc}(\lrPten)\ge 4$ and $\overset{\rightarrow}{s\smash{t}rc}(\lrPten)\le 4$.\\[1ex]
\[ \unit = 1cm
\pt{ -3 }{ 2 }
\pt{ -1.09788696741 }{ 0.61803398875 }
\pt{ -1.82442949542 }{ -1.61803398875 }
\pt{ -4.17557050458 }{ -1.61803398875 }
\pt{ -4.90211303259 }{ 0.61803398875 }
\pt{ -3 }{ 1 }
\pt{ -2.0489434837 }{ 0.309016994375 }
\pt{ -2.41221474771 }{ -0.809016994375 }
\pt{ -3.58778525229 }{ -0.809016994375 }
\pt{ -3.9510565163 }{ 0.309016994375 }
\dl{ -3 }{ 2 }{ -1.09788696741 }{ 0.61803398875 }
\dl{ -1.09788696741 }{ 0.61803398875 }{ -1.82442949542 }{ -1.61803398875 }
\dl{ -1.82442949542 }{ -1.61803398875 }{ -4.17557050458 }{ -1.61803398875 }
\dl{ -4.17557050458 }{ -1.61803398875 }{ -4.90211303259 }{ 0.61803398875 }
\dl{ -4.90211303259 }{ 0.61803398875 }{ -3 }{ 2 }
\dl{ -3 }{ 2 }{ -3 }{ 1 }
\dl{ -1.09788696741 }{ 0.61803398875 }{ -2.0489434837 }{ 0.309016994375 }
\dl{ -1.82442949542 }{ -1.61803398875 }{ -2.41221474771 }{ -0.809016994375 }
\dl{ -4.17557050458 }{ -1.61803398875 }{ -3.58778525229 }{ -0.809016994375 }
\dl{ -4.90211303259 }{ 0.61803398875 }{ -3.9510565163 }{ 0.309016994375 }
\dl{ -3 }{ 1 }{ -2.41221474771 }{ -0.809016994375 }
\dl{ -2.41221474771 }{ -0.809016994375 }{ -3.9510565163 }{ 0.309016994375 }
\dl{ -3.9510565163 }{ 0.309016994375 }{ -2.0489434837 }{ 0.309016994375 }
\dl{ -2.0489434837 }{ 0.309016994375 }{ -3.58778525229 }{ -0.809016994375 }
\dl{ -3.58778525229 }{ -0.809016994375 }{ -3 }{ 1 }
\point{-3.24}{-2.5}{\small (a)}
\point{-3.19}{2.17}{\small $u_0$}\point{-1}{0.57}{\small $u_1$}\point{-2}{-1.9}{\small $u_2$}\point{-4.37}{-1.9}{\small $u_3$}\point{-5.35}{0.57}{\small $u_4$}
\point{-2.93}{1.05}{\small $v_0$}\point{-2.1}{0.03}{\small $v_1$}\point{-2.7}{-1.03}{\small $v_2$}\point{-3.56}{-1.03}{\small $v_3$}\point{-4.18}{0.03}{\small $v_4$}
\pt{ 3 }{ 2 }
\pt{ 4.90211303259 }{ 0.61803398875 }
\pt{ 4.17557050458 }{ -1.61803398875 }
\pt{ 1.82442949542 }{ -1.61803398875 }
\pt{ 1.09788696741 }{ 0.61803398875 }
\pt{ 3 }{ 1 }
\pt{ 3.9510565163 }{ 0.309016994375 }
\pt{ 3.58778525229 }{ -0.809016994375 }
\pt{ 2.41221474771 }{ -0.809016994375 }
\pt{ 2.0489434837 }{ 0.309016994375 }
\dl{ 3 }{ 2 }{ 4.90211303259 }{ 0.61803398875 }
\dl{ 4.90211303259 }{ 0.61803398875 }{ 4.17557050458 }{ -1.61803398875 }
\dl{ 4.17557050458 }{ -1.61803398875 }{ 1.82442949542 }{ -1.61803398875 }
\dl{ 1.82442949542 }{ -1.61803398875 }{ 1.09788696741 }{ 0.61803398875 }
\dl{ 1.09788696741 }{ 0.61803398875 }{ 3 }{ 2 }
\dl{ 3 }{ 2 }{ 3 }{ 1 }
\dl{ 4.90211303259 }{ 0.61803398875 }{ 3.9510565163 }{ 0.309016994375 }
\dl{ 4.17557050458 }{ -1.61803398875 }{ 3.58778525229 }{ -0.809016994375 }
\dl{ 1.82442949542 }{ -1.61803398875 }{ 2.41221474771 }{ -0.809016994375 }
\dl{ 1.09788696741 }{ 0.61803398875 }{ 2.0489434837 }{ 0.309016994375 }
\dl{ 3 }{ 1 }{ 3.58778525229 }{ -0.809016994375 }
\dl{ 3.58778525229 }{ -0.809016994375 }{ 2.0489434837 }{ 0.309016994375 }
\dl{ 2.0489434837 }{ 0.309016994375 }{ 3.9510565163 }{ 0.309016994375 }
\dl{ 3.9510565163 }{ 0.309016994375 }{ 2.41221474771 }{ -0.809016994375 }
\dl{ 2.41221474771 }{ -0.809016994375 }{ 3 }{ 1 }
\point{2.8}{-2.5}{\small (b)}
\ptlu{3}{2}{1}\ptlr{4.93}{0.61}{3}\ptld{4.2}{-1.65}{2}\ptld{1.8}{-1.65}{2}\ptll{1.04}{0.61}{2}
\ptlr{3.01}{1.05}{1}\ptld{3.97}{0.28}{2}\ptld{3.48}{-0.78}{3}\ptld{2.52}{-0.78}{2}\ptld{2.03}{0.28}{2}
\ptlu{3}{0.23}{3}\ptll{3.54}{0.1}{2}\ptlr{2.46}{0.1}{3}\ptlu{3.17}{-0.3}{1}\ptlu{2.83}{-0.3}{1}
\ptlu{4.05}{1.25}{2}\ptlu{1.95}{1.25}{3}\ptlr{4.55}{-0.58}{1}\ptll{1.45}{-0.58}{1}\ptld{3}{-1.65}{3}
\ptll{2.99}{1.45}{4}\ptlu{4.3}{0.38}{4}\ptlu{1.7}{0.38}{4}\ptlr{3.85}{-1.1}{4}\ptll{2.15}{-1.1}{4}
\ptlu{0}{-3.5}{\textup{\small Figure 1. (a) The Petersen graph ${\sf P}_{10}$; (b) A strongly total rainbow connected colouring for ${\sf P}_{10}$.}}
\]\\[-3ex]

Suppose first that there is a total rainbow connected colouring $c$ of $\lrPten$, using at most three colours, say $1,2,3$. Then whenever we have $x,y\in V(\lrPten)$ and $xy\not\in A(\lrPten)$, there must be a total-rainbow $x-y$ path of length $2$ in $\lrPten$. By considering the unique paths of length $2$ between the vertices $u_0,u_2,v_1$, we may assume that $c(u_0u_1)=c(u_2u_1)=c(v_1u_1)=1$, $c(u_1u_0)=c(u_1u_2)=c(u_1v_1)=2$, and $c(u_1)=3$. Then by considering the unique paths of length $2$ between $u_1,u_3$, we must have $c(u_2u_3)=1$, $c(u_3u_2)=2$, and $c(u_2)=3$. Similarly by considering the pairs $u_2,u_4$, then $u_3,u_0$, we must have $c(u_4u_3)=c(u_4u_0)=1$, $c(u_3u_4)=c(u_0u_4)=2$, and $c(u_3)=c(u_4)=3$. But then, we do not have total-rainbow paths between $u_1,u_4$, which is a contradiction. Therefore, $\overset{\rightarrow}{\smash{t}rc}(\lrPten)\ge 4$.

Now we define a total-colouring of ${\sf P}_{10}$ as follows. We assign strongly total rainbow connected colourings for the two cycles $u_0u_1u_2u_3u_4u_0$ and $v_0v_2v_4v_1v_3v_0$ with colours $1,2,3$, and then colour $4$ to the edges $u_iv_i$, for $0\le i\le 4$. For example, see Figure 1(b). We can then extend this to a total-colouring $c'$ of $\lrPten$ where two symmetric arcs of $\lrPten$ both receive the same colour as the corresponding edge in ${\sf P}_{10}$. Then it is easy to check that $c'$ is a strongly total rainbow connected colouring for $\lrPten$. Therefore, $\overset{\rightarrow}{s\smash{t}rc}(\lrPten)\le 4$.

Finally for every $n\ge 11$, we can obtain the digraph $D_n$ on $n$ vertices from $\lrPten$ by expanding $v_0$ into $K\cong\lrK_{n-9}$. That is, $D_n=(\lrPten)_{v_0\to K}$. Then note that $\textup{diam}(D_n)=2$. We may again apply the first argument above to obtain $\overset{\rightarrow}{\smash{t}rc}(D_n)\ge 4$. Moreover, we can extend the total-colouring $c'$ on $\lrPten$ to a total-colouring $c''$ on $D_n$ where $c''(uv)=c'(uv)$ if $uv\not\in A(K)$; $c''(uv)=c'(v_0v)$ if $u\in V(K)$ and $v\not\in V(K)$; $c''(uv)=c'(uv_0)$ if $u\not\in V(K)$ and $v\in V(K)$; $c''(uv)=1$ if $uv\in A(K)$; $c''(w)=c'(w)$ if $w\not\in V(K)$; and $c''(w)=1$ if $w\in V(K)$. Then it is easy to see that $c''$ is a strongly total rainbow connected colouring for $D_n$. Indeed, if $x,y\in V(D_n)$ and $xy\not\in A(D_n)$, then at most one of $x,y$ is in $V(K)$, and $x,y$ correspond to distinct vertices $x',y'$ in $\lrPten$. Then a total-rainbow $x-y$ geodesic of length $2$ in $D_n$ corresponds to a total-rainbow $x'-y'$ geodesic of length $2$ in $\lrPten$. Therefore, $\overset{\rightarrow}{s\smash{t}rc}(D_n)\le 4$, and $\overset{\rightarrow}{\smash{t}rc}(D_n)=\overset{\rightarrow}{s\smash{t}rc}(D_n)=4$.
\end{proof}

We propose the following problem.

\begin{prob}
Among all digraphs $D$ with diameter $2$, are the parameters $\overset{\rightarrow}{rc}(D)$, $\overset{\rightarrow}{src}(D)$, $\overset{\rightarrow}{\smash{t}rc}(D)$, and $\overset{\rightarrow}{s\smash{t}rc}(D)$ unbounded?
\end{prob}

\indent Alva-Samos and Montellano-Ballesteros \cite{AM2016} showed that for a connected graph $G$, 
\begin{equation}
\overset{\rightarrow}{rc}(\lrG)\le rc(G)\quad\textup{and}\quad\overset{\rightarrow}{src}(\lrG)\le src(G).\label{ASMBrmk}
\end{equation}
Furthermore, for each inequality, there is an infinite family of graphs where equality holds, and also with the difference between the two parameters unbounded. For example, from \cite{AM2016,CJMZ2008}, we have $rc(C_n)=\overset{\rightarrow}{rc}(\lrC_n)=src(C_n)=\overset{\rightarrow}{src}(\lrC_n)=\lceil\frac{n}{2}\rceil$ for $n\ge 4$. Also, we have $\overset{\rightarrow}{rc}(\lrK_{1,n})=\overset{\rightarrow}{src}(\lrK_{1,n})=2$ and $rc(K_{1,n})=src(K_{1,n})=n$ for $n\ge 2$, where $K_{1,n}$ is the star with $n$ edges. On the other hand, for rainbow vertex-connection, Lei, Li, Liu, and Shi \cite{LLLS2017} showed that
\begin{equation}
\overset{\rightarrow}{rvc}(\lrG)=rvc(G)\quad\textup{and}\quad \overset{\rightarrow}{srvc}(\lrG)=srvc(G).\label{LLLSrmk}
\end{equation}

For total rainbow connection, we have the analogous inequalities to (\ref{ASMBrmk}).
\begin{prop}\label{pro3}
For a connected graph $G$, we have 
\begin{equation}
\overset{\rightarrow}{\smash{t}rc}(\lrG)\le trc(G)\quad\textup{and}\quad \overset{\rightarrow}{s\smash{t}rc}(\lrG)\le strc(G).\label{prop3eq}
\end{equation}
\end{prop}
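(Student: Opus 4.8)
The plan is to take optimal total-colourings of $G$ and lift them to $\lrG$ by giving each pair of symmetric arcs the colour of the underlying edge. First I would fix a total rainbow connected total-colouring $c$ of $G$ using $trc(G)$ colours (respectively, a strongly total rainbow connected total-colouring of $G$ using $strc(G)$ colours). I then define a total-colouring $\hat c$ of $\lrG$ by setting $\hat c(uv)=\hat c(vu)=c(uv)$ for each edge $uv$ of $G$, and $\hat c(w)=c(w)$ for each vertex $w$. This uses exactly the same number of colours as $c$, so it suffices to show that $\hat c$ is total rainbow connected (respectively, strongly total rainbow connected); this would yield $\overset{\rightarrow}{\smash{t}rc}(\lrG)\le trc(G)$ (respectively, $\overset{\rightarrow}{s\smash{t}rc}(\lrG)\le strc(G)$).

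For the first inequality, consider any ordered pair $(u,v)$ of vertices of $\lrG$. Since $c$ is total rainbow connected in $G$, there is a total-rainbow $u-v$ path $P$ in $G$. Orienting each edge of $P$ in the direction from $u$ towards $v$ yields a directed $u-v$ path $\hat P$ in $\lrG$, since $\lrG$ contains both orientations of every edge. By construction, $\hat c$ assigns to each arc of $\hat P$ the colour that $c$ gave to the corresponding edge of $P$, and to each internal vertex of $\hat P$ the colour that $c$ gave to that same vertex. As the edges and internal vertices of $P$ received distinct colours under $c$, the arcs and internal vertices of $\hat P$ receive distinct colours under $\hat c$, so $\hat P$ is total-rainbow. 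Hence $\hat c$ is total rainbow connected.

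For the second inequality, the only additional point is that the lifted path must remain a geodesic. Here I would first record that $d_{\lrG}(u,v)=d_G(u,v)$ for all $u,v$: any directed $u-v$ path in $\lrG$ projects to a $u-v$ walk in $G$ of the same length, giving $d_{\lrG}(u,v)\ge d_G(u,v)$, while orienting a shortest $u-v$ path of $G$ produces a directed $u-v$ path in $\lrG$ of length $d_G(u,v)$, giving the reverse inequality. Now, starting from a total-rainbow $u-v$ geodesic $P$ of $G$, which has length $d_G(u,v)$, the oriented path $\hat P$ is total-rainbow as above and has length $d_G(u,v)=d_{\lrG}(u,v)$; thus $\hat P$ is a total-rainbow $u-v$ geodesic in $\lrG$. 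Hence $\hat c$ is strongly total rainbow connected.

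I expect no serious obstacle: the argument is a direct lifting of the colouring, entirely parallel to the corresponding results (\ref{ASMBrmk}) and (\ref{LLLSrmk}) for the edge- and vertex-coloured versions. The only step requiring a moment of care is the distance identity $d_{\lrG}(u,v)=d_G(u,v)$ used in the strong case, but this is immediate from the symmetry of the biorientation.
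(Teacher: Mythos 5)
Your proof is correct and takes essentially the same approach as the paper: both lift an optimal total-colouring of $G$ to $\lrG$ by giving each pair of symmetric arcs the colour of the underlying edge and copying the vertex colours. The paper leaves the verification (including the distance identity $d_{\lrG}(u,v)=d_G(u,v)$ needed in the strong case) as ``not hard to see,'' which you have simply written out in full.
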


\begin{proof}
Given a (strongly) total rainbow connected colouring of $G$, it is not hard to see that the total-colouring of $\lrG$, obtained by assigning the colour of the edge $uv$ to both arcs $uv,vu$, and the colour of the vertex $u$ of $G$ to the corresponding vertex $u$ of $\lrG$, is a (strongly) total rainbow connected colouring of $\lrG$. 
\end{proof}

As in the case for rainbow connection, for each inequality of (\ref{prop3eq}), there is an infinite family of graphs where equality holds, and also with the difference between the two parameters unbounded. We simply use the same examples. For $n\ge 3$, we have $trc(C_n)=\overset{\rightarrow}{\smash{t}rc}(\lrC_n)=strc(C_n)=\overset{\rightarrow}{s\smash{t}rc}(\lrC_n)$, which we will see in Theorem \ref{thm2} later. Also, for $n\ge 2$, we have $\overset{\rightarrow}{\smash{t}rc}(\lrK_{1,n})=\overset{\rightarrow}{s\smash{t}rc}(\lrK_{1,n})=3$ and $trc(K_{1,n})=strc(K_{1,n})=n+1$.

For the relationship between the total rainbow connection numbers of a digraph and its spanning subdigraphs, it is not hard to see that the following holds.
\begin{prop}
Let $D$ and $H$ be strongly connected digraphs such that, $H$ is a spanning subdigraph of $D$. Then $\overset{\rightarrow}{\smash{t}rc}(D)\leq \overset{\rightarrow}{\smash{t}rc}(H)$.
\end{prop}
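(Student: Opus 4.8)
The plan is to prove this by a direct monotonicity argument: an optimal total rainbow connected colouring of $H$ can be transferred, essentially verbatim, to $D$. Set $k=\overset{\rightarrow}{\smash{t}rc}(H)$ and fix a total rainbow connected total-colouring $c$ of $H$ using $k$ colours; this exists because $H$ is strongly connected. Since $H$ is a spanning subdigraph of $D$, we have $V(H)=V(D)$ and $A(H)\subseteq A(D)$. First I would define a total-colouring $c'$ of $D$ by setting $c'(w)=c(w)$ for every vertex $w\in V(D)=V(H)$, setting $c'(e)=c(e)$ for every arc $e\in A(H)$, and assigning, say, colour $1$ to every arc in $A(D)\setminus A(H)$. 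Then $c'$ uses exactly $k$ colours and agrees with $c$ on all vertices and on all arcs of $H$.

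The key step is to observe that total-rainbow paths of $H$ survive in $D$ under $c'$. Let $(u,v)$ be any ordered pair of distinct vertices of $D$. Since $c$ is total rainbow connected on $H$, there is a total-rainbow $u-v$ path $P$ in $H$. Every arc and every internal vertex of $P$ lies in $H$, so $c'$ assigns to them exactly the colours that $c$ does; hence $P$ is also a total-rainbow $u-v$ path in $D$ with respect to $c'$. As $(u,v)$ was arbitrary, $c'$ is a total rainbow connected total-colouring of $D$, whence $\overset{\rightarrow}{\smash{t}rc}(D)\le k=\overset{\rightarrow}{\smash{t}rc}(H)$.

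There is essentially no hard step here: the only point worth flagging is that, in this setting, a ``total-colouring'' carries no propriety requirement (adjacent vertices and arcs may share colours), so the arbitrary assignment of colour $1$ to the arcs in $A(D)\setminus A(H)$ is legitimate and cannot interfere with the argument---those extra arcs are simply never needed by the paths $P$ inherited from $H$. One should also note that $\overset{\rightarrow}{\smash{t}rc}(H)$ is well-defined precisely because $H$ is assumed to be strongly connected, which is what guarantees the existence of the colouring $c$ in the first place.
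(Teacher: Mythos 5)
Your proof is correct and is exactly the argument the paper has in mind (the paper states this proposition without proof, remarking only that "it is not hard to see"): extend an optimal total rainbow connected colouring of $H$ arbitrarily to the arcs of $A(D)\setminus A(H)$, and note that every total-rainbow path of $H$ persists, with the same colours on its arcs and internal vertices, as a total-rainbow path in $D$. Nothing further is needed.
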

However, this is not true for the strong total rainbow connection number, as we will see in the next lemma.
\begin{lem}\label{lem1}
There are strongly connected digraphs $D$ and $H$ such that, $H$ is a spanning
subdigraph of $D$, and $\overset{\rightarrow}{s\smash{t}rc}(D)> \overset{\rightarrow}{s\smash{t}rc}(H)$.
\end{lem}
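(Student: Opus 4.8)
The plan is to exploit the fact that, unlike $\overset{\rightarrow}{\smash{t}rc}$ (which is monotone under taking spanning superdigraphs, by the preceding Proposition), the strong parameter $\overset{\rightarrow}{s\smash{t}rc}$ is sensitive to \emph{which} geodesics are available, not merely to the distances. I would therefore search for an explicit pair $H\subset D$, with $H$ a spanning subdigraph obtained by deleting a single arc $e$, chosen so that in $D$ some ordered pair $(u,v)$ has a \emph{unique} shortest path $P$, whereas in $H=D-e$ the same pair acquires \emph{several} shortest paths. A unique geodesic is rigid: all of its arcs and internal vertices must be pairwise distinctly coloured, and it imposes colour constraints on the geodesics that overlap it; by contrast, a pair with several shortest paths only needs \emph{one} of them to be total-rainbow, which leaves room to reuse a colour. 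Before constructing anything I note a constraint showing the example cannot be small: by the monotonicity of $\overset{\rightarrow}{\smash{t}rc}$ together with Theorem \ref{pro2}(c)(iv)--(v), if $\overset{\rightarrow}{s\smash{t}rc}(H)\in\{3,4\}$ then $\overset{\rightarrow}{s\smash{t}rc}(D)=\overset{\rightarrow}{s\smash{t}rc}(H)$; hence any strict increase forces $\overset{\rightarrow}{s\smash{t}rc}(H)\ge 5$, and so $\overset{\rightarrow}{s\smash{t}rc}(D)\ge 6$.

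For the construction I would build $D$ to be strongly connected with small diameter (to keep both the bookkeeping and the value of $\overset{\rightarrow}{s\smash{t}rc}$ under control), containing a rigid gadget in the spirit of the one used for $\lrPten$ in Lemma \ref{petersenlem}: a family of ordered pairs whose unique short geodesics overlap, so that their arcs and internal vertices are pairwise co-geodesic and thus pin $\overset{\rightarrow}{s\smash{t}rc}(D)$ at some value $N$. The deleted arc $e$ would be placed on the geodesic of a critical pair $(u,v)$ so that (i) $D-e$ is still strongly connected, and (ii) in $H=D-e$ the distance $d(u,v)$ strictly increases and is realised by at least two internally different shortest paths. I would then exhibit an explicit total-colouring of $H$ with $N-1$ colours: on each affected pair I use the newly gained freedom to make a single geodesic total-rainbow while letting its alternative repeat a colour, recovering exactly one colour relative to the forced colouring of $D$. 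Checking that this is a strongly total rainbow connected colouring of $H$ is routine once the distances in $H$ are recomputed.

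The lower bound $\overset{\rightarrow}{s\smash{t}rc}(D)\ge N$ is where the real work lies, and it is the step I expect to be the main obstacle. Following the template of Lemma \ref{petersenlem}, I would assume a strongly total rainbow connected colouring of $D$ with only $N-1$ colours and use the \emph{uniqueness} of the geodesics in the gadget to propagate forced colour equalities and inequalities from pair to pair until two requirements collide, exactly as the argument on $u_0,u_2,v_1$ cascades around the Petersen graph. The delicate point is to rule out every ``escape'': since adding an arc creates shortcuts that tend to \emph{relax} the long geodesics, one must verify that a colour apparently freed by such a shortcut cannot in fact be reused at the bottleneck, i.e.\ that in $D$ the forced geodesics collectively exhaust all $N-1$ colours there. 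This is a finite but careful case analysis on the gadget. Once both bounds are in place, $\overset{\rightarrow}{s\smash{t}rc}(D)\ge N>N-1\ge\overset{\rightarrow}{s\smash{t}rc}(H)$, which is the desired conclusion; if one wished for more, expanding a vertex into a growing bioriented clique (as in the last paragraph of Lemma \ref{petersenlem}) should promote this single example to an infinite family.
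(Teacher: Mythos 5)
Your strategy is exactly the one the paper itself follows: its $H$ is the digraph formed by the solid arcs of Figure 2, and $D=H+xy$, so $H=D-e$ for a single arc $e=xy$; in $H$ the distance from $x$ to $y$ is $2$ and is realised by four internally different geodesics (through the four middle vertices), which is precisely the slack you want to exploit, while in $D$ the arc $xy$ makes geodesics such as the $u_1$--$v_3$ geodesic $u_1v_1w_1xyw_3u_3v_3$ unique and rigid. However, as written your proposal has a genuine gap: the lemma is an existence statement, and you never exhibit the digraphs, never give the $(N-1)$-colouring of $H$, and never carry out the lower-bound argument for $D$. Every step that carries mathematical content is announced (``I would search'', ``I would build'', ``I would exhibit'') rather than executed, and you yourself flag the forcing argument as an unresolved obstacle. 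A plan that correctly locates the difficulty is not a proof of an existence claim; until a concrete gadget is produced and both bounds are verified, nothing is established.

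For comparison, here is how the paper executes your plan, and the lower bound is indeed where all the work sits. The colouring of $H$ shown in Figure 2 uses $16$ colours (each of the eight symmetric pairs of middle arcs repeats a colour), giving $\overset{\rightarrow}{s\smash{t}rc}(H)\le 16$. For $D$, assume a strongly total rainbow connected colouring with at most $16$ colours: the six cut-vertices $w_1,w_2,w_3,w_4,x,y$ must get distinct colours, say $1,\dots,6$; the four arcs $u_iv_i$ must get four further distinct colours $7,\dots,10$, and a list of other elements near the corners is barred from colours $1,\dots,10$; the unique $u_1$--$v_2$ geodesic $u_1v_1w_1xw_2u_2v_2$ then forces six new colours $11,\dots,16$ onto $v_1$, $v_1w_1$, $w_1x$, $xw_2$, $w_2u_2$, $u_2$; finally the unique $u_1$--$v_3$ geodesic $u_1v_1w_1xyw_3u_3v_3$ still has four uncoloured elements ($xy$, $yw_3$, $w_3u_3$, $u_3$) but only three admissible colours remain, a contradiction. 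Hence $\overset{\rightarrow}{s\smash{t}rc}(D)\ge 17>16\ge\overset{\rightarrow}{s\smash{t}rc}(H)$. One small correction to your preliminary observation: from the monotonicity of $\overset{\rightarrow}{\smash{t}rc}$ and Theorem~\ref{pro2}(c)(iv)--(v) you may conclude that no strict increase is possible when $\overset{\rightarrow}{s\smash{t}rc}(H)\in\{3,4\}$ (which is all you need), but not the equality $\overset{\rightarrow}{s\smash{t}rc}(D)=\overset{\rightarrow}{s\smash{t}rc}(H)$ you state, since $D$ could be a bioriented complete graph with $\overset{\rightarrow}{s\smash{t}rc}(D)=1$.
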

\[ \unit = 0.6cm
\pt{0}{0.8}\pt{0}{-0.8}\pt{0}{2.4}\pt{0}{-2.4}
\pt{3.9}{0}\pt{-3.9}{0}\pt{6.15}{1.3}\pt{6.15}{-1.3}\pt{-6.15}{1.3}\pt{-6.15}{-1.3}
\pt{8.28}{2.79}\pt{8.62}{0.5}\pt{8.28}{-2.79}\pt{8.62}{-0.5}\pt{-8.28}{2.79}\pt{-8.62}{0.5}\pt{-8.28}{-2.79}\pt{-8.62}{-0.5}
\varline{500}{0.6}
\bez{0}{2.4}{2.17}{1.88}{3.9}{0}\bez{0}{2.4}{1.83}{0.92}{3.9}{0}
\bez{0}{0.8}{2.06}{0.9}{3.9}{0}\bez{0}{0.8}{1.94}{0.01}{3.9}{0}
\bez{0}{-2.4}{2.17}{-1.88}{3.9}{0}\bez{0}{-2.4}{1.83}{-0.92}{3.9}{0}
\bez{0}{-0.8}{2.06}{-0.9}{3.9}{0}\bez{0}{-0.8}{1.94}{-0.01}{3.9}{0}
\bez{0}{2.4}{-2.17}{1.88}{-3.9}{0}\bez{0}{2.4}{-1.83}{0.92}{-3.9}{0}
\bez{0}{0.8}{-2.06}{0.9}{-3.9}{0}\bez{0}{0.8}{-1.94}{0.01}{-3.9}{0}
\bez{0}{-2.4}{-2.17}{-1.88}{-3.9}{0}\bez{0}{-2.4}{-1.83}{-0.92}{-3.9}{0}
\bez{0}{-0.8}{-2.06}{-0.9}{-3.9}{0}\bez{0}{-0.8}{-1.94}{-0.01}{-3.9}{0}
%
\bez{3.9}{0}{4.83}{1}{6.15}{1.3}\bez{3.9}{0}{5.23}{0.3}{6.15}{1.3}
\bez{3.9}{0}{4.83}{-1}{6.15}{-1.3}\bez{3.9}{0}{5.23}{-0.3}{6.15}{-1.3}
\bez{-3.9}{0}{-4.83}{1}{-6.15}{1.3}\bez{-3.9}{0}{-5.23}{0.3}{-6.15}{1.3}
\bez{-3.9}{0}{-4.83}{-1}{-6.15}{-1.3}\bez{-3.9}{0}{-5.23}{-0.3}{-6.15}{-1.3}
\dl{8.28}{2.79}{8.62}{0.5}\dl{8.28}{2.79}{6.15}{1.3}\dl{8.62}{0.5}{6.15}{1.3}
\dl{-8.28}{2.79}{-8.62}{0.5}\dl{-8.28}{2.79}{-6.15}{1.3}\dl{-8.62}{0.5}{-6.15}{1.3}
\dl{8.28}{-2.79}{8.62}{-0.5}\dl{8.28}{-2.79}{6.15}{-1.3}\dl{8.62}{-0.5}{6.15}{-1.3}
\dl{-8.28}{-2.79}{-8.62}{-0.5}\dl{-8.28}{-2.79}{-6.15}{-1.3}\dl{-8.62}{-0.5}{-6.15}{-1.3}
\varline{850}{5.5}
\bez{-3.9}{0}{0}{-5.84}{3.9}{0}
\thnline
\dl{2.22}{1.44}{2.07}{1.67}\dl{2.18}{1.47}{2.07}{1.67}\dl{2.15}{1.49}{2.07}{1.67}\dl{2.12}{1.51}{2.07}{1.67}\dl{2.1}{1.53}{2.07}{1.67}\dl{2.08}{1.54}{2.07}{1.67}\dl{2.06}{1.55}{2.07}{1.67}
\dl{2.22}{1.44}{1.95}{1.48}\dl{2.18}{1.47}{1.95}{1.48}\dl{2.15}{1.49}{1.95}{1.48}\dl{2.12}{1.51}{1.95}{1.48}\dl{2.1}{1.53}{1.95}{1.48}\dl{2.08}{1.54}{1.95}{1.48}\dl{2.06}{1.55}{1.95}{1.48}
\dl{1.78}{1.12}{1.95}{0.9}\dl{1.82}{1.09}{1.95}{0.9}\dl{1.86}{1.07}{1.95}{0.9}\dl{1.89}{1.05}{1.95}{0.9}\dl{1.92}{1.04}{1.95}{0.9}\dl{1.94}{1.03}{1.95}{0.9}\dl{1.96}{1.02}{1.95}{0.9}
\dl{1.78}{1.12}{2.05}{1.09}\dl{1.82}{1.09}{2.05}{1.09}\dl{1.86}{1.07}{2.05}{1.09}\dl{1.89}{1.05}{2.05}{1.09}\dl{1.92}{1.04}{2.05}{1.09}\dl{1.94}{1.03}{2.05}{1.09}\dl{1.96}{1.02}{2.05}{1.09}\dl{1.96}{1.02}{2.05}{1.09}
\dl{2.12}{0.63}{1.89}{0.8}\dl{2.07}{0.64}{1.89}{0.8}\dl{2.03}{0.65}{1.89}{0.8}\dl{2}{0.65}{1.89}{0.8}\dl{1.97}{0.66}{1.89}{0.8}\dl{1.94}{0.66}{1.89}{0.8}\dl{1.92}{0.67}{1.89}{0.8}
\dl{2.12}{0.63}{1.84}{0.57}\dl{2.07}{0.64}{1.84}{0.57}\dl{2.03}{0.65}{1.84}{0.57}\dl{2}{0.65}{1.84}{0.57}\dl{1.97}{0.66}{1.84}{0.57}\dl{1.94}{0.66}{1.84}{0.57}\dl{1.92}{0.67}{1.84}{0.57}
\dl{1.81}{0.23}{2.04}{0.06}\dl{1.86}{0.22}{2.04}{0.06}\dl{1.9}{0.21}{2.04}{0.06}\dl{1.93}{0.21}{2.04}{0.06}\dl{1.96}{0.2}{2.04}{0.06}\dl{1.99}{0.2}{2.04}{0.06}\dl{2.01}{0.19}{2.04}{0.06}
\dl{1.81}{0.23}{2.09}{0.3}\dl{1.86}{0.22}{2.09}{0.3}\dl{1.9}{0.21}{2.09}{0.3}\dl{1.93}{0.21}{2.09}{0.3}\dl{1.96}{0.2}{2.09}{0.3}\dl{1.99}{0.2}{2.09}{0.3}\dl{2.01}{0.19}{2.09}{0.3}
\dl{2.03}{-0.18}{1.8}{-0.35}\dl{1.98}{-0.19}{1.8}{-0.35}\dl{1.94}{-0.2}{1.8}{-0.35}\dl{1.91}{-0.2}{1.8}{-0.35}\dl{1.88}{-0.21}{1.8}{-0.35}\dl{1.85}{-0.21}{1.8}{-0.35}\dl{1.83}{-0.22}{1.8}{-0.35}
\dl{2.03}{-0.18}{1.75}{-0.12}\dl{1.98}{-0.19}{1.75}{-0.12}\dl{1.94}{-0.2}{1.75}{-0.12}\dl{1.91}{-0.2}{1.75}{-0.12}\dl{1.88}{-0.21}{1.75}{-0.12}\dl{1.85}{-0.21}{1.75}{-0.12}\dl{1.83}{-0.22}{1.75}{-0.12}
\dl{1.88}{-0.68}{2.11}{-0.51}\dl{1.93}{-0.67}{2.11}{-0.51}\dl{1.97}{-0.66}{2.11}{-0.51}\dl{2}{-0.66}{2.11}{-0.51}\dl{2.03}{-0.65}{2.11}{-0.51}\dl{2.06}{-0.65}{2.11}{-0.51}\dl{2.08}{-0.64}{2.11}{-0.51}
\dl{1.88}{-0.68}{2.16}{-0.74}\dl{1.93}{-0.67}{2.16}{-0.74}\dl{1.97}{-0.66}{2.16}{-0.74}\dl{2}{-0.66}{2.16}{-0.74}\dl{2.03}{-0.65}{2.16}{-0.74}\dl{2.06}{-0.65}{2.16}{-0.74}\dl{2.08}{-0.64}{2.16}{-0.74}
\dl{2.03}{-1.57}{2.18}{-1.34}\dl{2.07}{-1.54}{2.18}{-1.34}\dl{2.1}{-1.52}{2.18}{-1.34}\dl{2.13}{-1.5}{2.18}{-1.34}\dl{2.15}{-1.48}{2.18}{-1.34}\dl{2.17}{-1.47}{2.18}{-1.34}\dl{2.19}{-1.46}{2.18}{-1.34}
\dl{2.03}{-1.57}{2.3}{-1.53}\dl{2.07}{-1.54}{2.3}{-1.53}\dl{2.1}{-1.52}{2.3}{-1.53}\dl{2.13}{-1.5}{2.3}{-1.53}\dl{2.15}{-1.48}{2.3}{-1.53}\dl{2.17}{-1.47}{2.3}{-1.53}\dl{2.19}{-1.46}{2.3}{-1.53}
\dl{1.96}{-1.02}{1.83}{-1.23}\dl{1.93}{-1.02}{1.83}{-1.23}\dl{1.9}{-1.04}{1.83}{-1.23}\dl{1.88}{-1.06}{1.83}{-1.23}\dl{1.86}{-1.08}{1.83}{-1.23}\dl{1.84}{-1.09}{1.83}{-1.23}\dl{1.82}{-1.1}{1.83}{-1.23}\dl{1.8}{-1.11}{1.83}{-1.23}
\dl{1.96}{-1.02}{1.71}{-1.04}\dl{1.93}{-1.02}{1.71}{-1.04}\dl{1.9}{-1.04}{1.71}{-1.04}\dl{1.88}{-1.06}{1.71}{-1.04}\dl{1.86}{-1.08}{1.71}{-1.04}\dl{1.84}{-1.09}{1.71}{-1.04}\dl{1.82}{-1.1}{1.71}{-1.04}\dl{1.8}{-1.11}{1.71}{-1.04}
%
\dl{5.04}{0.88}{4.87}{0.67}\dl{5}{0.86}{4.87}{0.67}\dl{4.96}{0.84}{4.87}{0.67}\dl{4.92}{0.82}{4.87}{0.67}\dl{4.89}{0.8}{4.87}{0.67}\dl{4.87}{0.78}{4.87}{0.67}
\dl{5.04}{0.88}{4.77}{0.85}\dl{5}{0.86}{4.77}{0.85}\dl{4.96}{0.84}{4.77}{0.85}\dl{4.92}{0.82}{4.77}{0.85}\dl{4.89}{0.8}{4.77}{0.85}\dl{4.87}{0.78}{4.77}{0.85}
\dl{5.01}{0.42}{5.18}{0.63}\dl{5.05}{0.44}{5.18}{0.63}\dl{5.09}{0.46}{5.18}{0.63}\dl{5.13}{0.48}{5.18}{0.63}\dl{5.16}{0.5}{5.18}{0.63}\dl{5.18}{0.52}{5.18}{0.63}
\dl{5.01}{0.42}{5.28}{0.45}\dl{5.05}{0.44}{5.28}{0.45}\dl{5.09}{0.46}{5.28}{0.45}\dl{5.13}{0.48}{5.28}{0.45}\dl{5.16}{0.5}{5.28}{0.45}\dl{5.18}{0.52}{5.28}{0.45}
\dl{5.24}{-0.54}{5.07}{-0.33}\dl{5.2}{-0.52}{5.07}{-0.33}\dl{5.16}{-0.5}{5.07}{-0.33}\dl{5.12}{-0.48}{5.07}{-0.33}\dl{5.09}{-0.46}{5.07}{-0.33}\dl{5.07}{-0.44}{5.07}{-0.33}
\dl{5.24}{-0.54}{4.97}{-0.51}\dl{5.2}{-0.52}{4.97}{-0.51}\dl{5.16}{-0.5}{4.97}{-0.51}\dl{5.12}{-0.48}{4.97}{-0.51}\dl{5.09}{-0.46}{4.97}{-0.51}\dl{5.07}{-0.44}{4.97}{-0.51}
\dl{4.81}{-0.76}{4.98}{-0.97}\dl{4.85}{-0.78}{4.98}{-0.97}\dl{4.89}{-0.8}{4.98}{-0.97}\dl{4.93}{-0.82}{4.98}{-0.97}\dl{4.96}{-0.84}{4.98}{-0.97}\dl{4.98}{-0.86}{4.98}{-0.97}
\dl{4.81}{-0.76}{5.08}{-0.79}\dl{4.85}{-0.78}{5.08}{-0.79}\dl{4.89}{-0.8}{5.08}{-0.79}\dl{4.93}{-0.82}{5.08}{-0.79}\dl{4.96}{-0.84}{5.08}{-0.79}\dl{4.98}{-0.86}{5.08}{-0.79}
%
\dl{7.31}{2.11}{7.05}{2.06}\dl{7.27}{2.08}{7.05}{2.06}\dl{7.24}{2.05}{7.05}{2.06}\dl{7.21}{2.03}{7.05}{2.06}\dl{7.18}{2.01}{7.05}{2.06}\dl{7.15}{2}{7.05}{2.06}
\dl{7.31}{2.11}{7.17}{1.89}\dl{7.27}{2.08}{7.17}{1.89}\dl{7.24}{2.05}{7.17}{1.89}\dl{7.21}{2.03}{7.17}{1.89}\dl{7.18}{2.01}{7.17}{1.89}\dl{7.15}{2}{7.17}{1.89}
\dl{7.26}{0.94}{7.52}{0.97}\dl{7.32}{0.92}{7.52}{0.97}\dl{7.36}{0.91}{7.52}{0.97}\dl{7.39}{0.9}{7.52}{0.97}\dl{7.42}{0.89}{7.52}{0.97}\dl{7.45}{0.88}{7.52}{0.97}
\dl{7.26}{0.94}{7.46}{0.77}\dl{7.32}{0.92}{7.46}{0.77}\dl{7.36}{0.91}{7.46}{0.77}\dl{7.39}{0.9}{7.46}{0.77}\dl{7.42}{0.89}{7.46}{0.77}\dl{7.45}{0.88}{7.46}{0.77}
\dl{8.44}{1.7}{8.53}{1.75}\dl{8.44}{1.68}{8.53}{1.75}\dl{8.45}{1.65}{8.53}{1.75}\dl{8.45}{1.62}{8.53}{1.75}\dl{8.46}{1.59}{8.53}{1.75}\dl{8.46}{1.55}{8.53}{1.75}\dl{8.47}{1.5}{8.53}{1.75}
\dl{8.44}{1.7}{8.33}{1.71}\dl{8.44}{1.68}{8.33}{1.71}\dl{8.45}{1.65}{8.33}{1.71}\dl{8.45}{1.62}{8.33}{1.71}\dl{8.46}{1.59}{8.33}{1.71}\dl{8.46}{1.55}{8.33}{1.71}\dl{8.47}{1.5}{8.33}{1.71}
\dl{7.12}{-1.98}{7.38}{-2.03}\dl{7.16}{-2.01}{7.38}{-2.03}\dl{7.19}{-2.04}{7.38}{-2.03}\dl{7.22}{-2.06}{7.38}{-2.03}\dl{7.25}{-2.08}{7.38}{-2.03}\dl{7.28}{-2.09}{7.38}{-2.03}
\dl{7.12}{-1.98}{7.26}{-2.2}\dl{7.16}{-2.01}{7.26}{-2.2}\dl{7.19}{-2.04}{7.26}{-2.2}\dl{7.22}{-2.06}{7.26}{-2.2}\dl{7.25}{-2.08}{7.26}{-2.2}\dl{7.28}{-2.09}{7.26}{-2.2}
\dl{7.51}{-0.86}{7.25}{-0.83}\dl{7.45}{-0.88}{7.25}{-0.83}\dl{7.41}{-0.89}{7.25}{-0.83}\dl{7.38}{-0.9}{7.25}{-0.83}\dl{7.35}{-0.91}{7.25}{-0.83}\dl{7.32}{-0.92}{7.25}{-0.83}
\dl{7.51}{-0.86}{7.31}{-1.03}\dl{7.45}{-0.88}{7.31}{-1.03}\dl{7.41}{-0.89}{7.31}{-1.03}\dl{7.38}{-0.9}{7.31}{-1.03}\dl{7.35}{-0.91}{7.31}{-1.03}\dl{7.32}{-0.92}{7.31}{-1.03}
\dl{8.46}{-1.58}{8.37}{-1.53}\dl{8.46}{-1.6}{8.37}{-1.53}\dl{8.45}{-1.63}{8.37}{-1.53}\dl{8.45}{-1.66}{8.37}{-1.53}\dl{8.44}{-1.69}{8.37}{-1.53}\dl{8.44}{-1.73}{8.37}{-1.53}\dl{8.43}{-1.78}{8.37}{-1.53}
\dl{8.46}{-1.58}{8.57}{-1.57}\dl{8.46}{-1.6}{8.57}{-1.57}\dl{8.45}{-1.63}{8.57}{-1.57}\dl{8.45}{-1.66}{8.57}{-1.57}\dl{8.44}{-1.69}{8.57}{-1.57}\dl{8.44}{-1.73}{8.57}{-1.57}\dl{8.43}{-1.78}{8.57}{-1.57}
\dl{-2.22}{-1.44}{-2.07}{-1.67}\dl{-2.18}{-1.47}{-2.07}{-1.67}\dl{-2.15}{-1.49}{-2.07}{-1.67}\dl{-2.12}{-1.51}{-2.07}{-1.67}\dl{-2.1}{-1.53}{-2.07}{-1.67}\dl{-2.08}{-1.54}{-2.07}{-1.67}\dl{-2.06}{-1.55}{-2.07}{-1.67}
\dl{-2.22}{-1.44}{-1.95}{-1.48}\dl{-2.18}{-1.47}{-1.95}{-1.48}\dl{-2.15}{-1.49}{-1.95}{-1.48}\dl{-2.12}{-1.51}{-1.95}{-1.48}\dl{-2.1}{-1.53}{-1.95}{-1.48}\dl{-2.08}{-1.54}{-1.95}{-1.48}\dl{-2.06}{-1.55}{-1.95}{-1.48}
\dl{-1.78}{-1.12}{-1.95}{-0.9}\dl{-1.82}{-1.09}{-1.95}{-0.9}\dl{-1.86}{-1.07}{-1.95}{-0.9}\dl{-1.89}{-1.05}{-1.95}{-0.9}\dl{-1.92}{-1.04}{-1.95}{-0.9}\dl{-1.94}{-1.03}{-1.95}{-0.9}\dl{-1.96}{-1.02}{-1.95}{-0.9}
\dl{-1.78}{-1.12}{-2.05}{-1.09}\dl{-1.82}{-1.09}{-2.05}{-1.09}\dl{-1.86}{-1.07}{-2.05}{-1.09}\dl{-1.89}{-1.05}{-2.05}{-1.09}\dl{-1.92}{-1.04}{-2.05}{-1.09}\dl{-1.94}{-1.03}{-2.05}{-1.09}\dl{-1.96}{-1.02}{-2.05}{-1.09}\dl{-1.96}{-1.02}{-2.05}{-1.09}
\dl{-2.12}{-0.63}{-1.89}{-0.8}\dl{-2.07}{-0.64}{-1.89}{-0.8}\dl{-2.03}{-0.65}{-1.89}{-0.8}\dl{-2}{-0.65}{-1.89}{-0.8}\dl{-1.97}{-0.66}{-1.89}{-0.8}\dl{-1.94}{-0.66}{-1.89}{-0.8}\dl{-1.92}{-0.67}{-1.89}{-0.8}
\dl{-2.12}{-0.63}{-1.84}{-0.57}\dl{-2.07}{-0.64}{-1.84}{-0.57}\dl{-2.03}{-0.65}{-1.84}{-0.57}\dl{-2}{-0.65}{-1.84}{-0.57}\dl{-1.97}{-0.66}{-1.84}{-0.57}\dl{-1.94}{-0.66}{-1.84}{-0.57}\dl{-1.92}{-0.67}{-1.84}{-0.57}
\dl{-1.81}{-0.23}{-2.04}{-0.06}\dl{-1.86}{-0.22}{-2.04}{-0.06}\dl{-1.9}{-0.21}{-2.04}{-0.06}\dl{-1.93}{-0.21}{-2.04}{-0.06}\dl{-1.96}{-0.2}{-2.04}{-0.06}\dl{-1.99}{-0.2}{-2.04}{-0.06}\dl{-2.01}{-0.19}{-2.04}{-0.06}
\dl{-1.81}{-0.23}{-2.09}{-0.3}\dl{-1.86}{-0.22}{-2.09}{-0.3}\dl{-1.9}{-0.21}{-2.09}{-0.3}\dl{-1.93}{-0.21}{-2.09}{-0.3}\dl{-1.96}{-0.2}{-2.09}{-0.3}\dl{-1.99}{-0.2}{-2.09}{-0.3}\dl{-2.01}{-0.19}{-2.09}{-0.3}
\dl{-2.03}{0.18}{-1.8}{0.35}\dl{-1.98}{0.19}{-1.8}{0.35}\dl{-1.94}{0.2}{-1.8}{0.35}\dl{-1.91}{0.2}{-1.8}{0.35}\dl{-1.88}{0.21}{-1.8}{0.35}\dl{-1.85}{0.21}{-1.8}{0.35}\dl{-1.83}{0.22}{-1.8}{0.35}
\dl{-2.03}{0.18}{-1.75}{0.12}\dl{-1.98}{0.19}{-1.75}{0.12}\dl{-1.94}{0.2}{-1.75}{0.12}\dl{-1.91}{0.2}{-1.75}{0.12}\dl{-1.88}{0.21}{-1.75}{0.12}\dl{-1.85}{0.21}{-1.75}{0.12}\dl{-1.83}{0.22}{-1.75}{0.12}
\dl{-1.88}{0.68}{-2.11}{0.51}\dl{-1.93}{0.67}{-2.11}{0.51}\dl{-1.97}{0.66}{-2.11}{0.51}\dl{-2}{0.66}{-2.11}{0.51}\dl{-2.03}{0.65}{-2.11}{0.51}\dl{-2.06}{0.65}{-2.11}{0.51}\dl{-2.08}{0.64}{-2.11}{0.51}
\dl{-1.88}{0.68}{-2.16}{0.74}\dl{-1.93}{0.67}{-2.16}{0.74}\dl{-1.97}{0.66}{-2.16}{0.74}\dl{-2}{0.66}{-2.16}{0.74}\dl{-2.03}{0.65}{-2.16}{0.74}\dl{-2.06}{0.65}{-2.16}{0.74}\dl{-2.08}{0.64}{-2.16}{0.74}
\dl{-2.03}{1.57}{-2.18}{1.34}\dl{-2.07}{1.54}{-2.18}{1.34}\dl{-2.1}{1.52}{-2.18}{1.34}\dl{-2.13}{1.5}{-2.18}{1.34}\dl{-2.15}{1.48}{-2.18}{1.34}\dl{-2.17}{1.47}{-2.18}{1.34}\dl{-2.19}{1.46}{-2.18}{1.34}
\dl{-2.03}{1.57}{-2.3}{1.53}\dl{-2.07}{1.54}{-2.3}{1.53}\dl{-2.1}{1.52}{-2.3}{1.53}\dl{-2.13}{1.5}{-2.3}{1.53}\dl{-2.15}{1.48}{-2.3}{1.53}\dl{-2.17}{1.47}{-2.3}{1.53}\dl{-2.19}{1.46}{-2.3}{1.53}
\dl{-1.96}{1.02}{-1.83}{1.23}\dl{-1.93}{1.02}{-1.83}{1.23}\dl{-1.9}{1.04}{-1.83}{1.23}\dl{-1.88}{1.06}{-1.83}{1.23}\dl{-1.86}{1.08}{-1.83}{1.23}\dl{-1.84}{1.09}{-1.83}{1.23}\dl{-1.82}{1.1}{-1.83}{1.23}\dl{-1.8}{1.11}{-1.83}{1.23}
\dl{-1.96}{1.02}{-1.71}{1.04}\dl{-1.93}{1.02}{-1.71}{1.04}\dl{-1.9}{1.04}{-1.71}{1.04}\dl{-1.88}{1.06}{-1.71}{1.04}\dl{-1.86}{1.08}{-1.71}{1.04}\dl{-1.84}{1.09}{-1.71}{1.04}\dl{-1.82}{1.1}{-1.71}{1.04}\dl{-1.8}{1.11}{-1.71}{1.04}
%
\dl{-5.04}{-0.88}{-4.87}{-0.67}\dl{-5}{-0.86}{-4.87}{-0.67}\dl{-4.96}{-0.84}{-4.87}{-0.67}\dl{-4.92}{-0.82}{-4.87}{-0.67}\dl{-4.89}{-0.8}{-4.87}{-0.67}\dl{-4.87}{-0.78}{-4.87}{-0.67}
\dl{-5.04}{-0.88}{-4.77}{-0.85}\dl{-5}{-0.86}{-4.77}{-0.85}\dl{-4.96}{-0.84}{-4.77}{-0.85}\dl{-4.92}{-0.82}{-4.77}{-0.85}\dl{-4.89}{-0.8}{-4.77}{-0.85}\dl{-4.87}{-0.78}{-4.77}{-0.85}
\dl{-5.01}{-0.42}{-5.18}{-0.63}\dl{-5.05}{-0.44}{-5.18}{-0.63}\dl{-5.09}{-0.46}{-5.18}{-0.63}\dl{-5.13}{-0.48}{-5.18}{-0.63}\dl{-5.16}{-0.5}{-5.18}{-0.63}\dl{-5.18}{-0.52}{-5.18}{-0.63}
\dl{-5.01}{-0.42}{-5.28}{-0.45}\dl{-5.05}{-0.44}{-5.28}{-0.45}\dl{-5.09}{-0.46}{-5.28}{-0.45}\dl{-5.13}{-0.48}{-5.28}{-0.45}\dl{-5.16}{-0.5}{-5.28}{-0.45}\dl{-5.18}{-0.52}{-5.28}{-0.45}
\dl{-5.24}{0.54}{-5.07}{0.33}\dl{-5.2}{0.52}{-5.07}{0.33}\dl{-5.16}{0.5}{-5.07}{0.33}\dl{-5.12}{0.48}{-5.07}{0.33}\dl{-5.09}{0.46}{-5.07}{0.33}\dl{-5.07}{0.44}{-5.07}{0.33}
\dl{-5.24}{0.54}{-4.97}{0.51}\dl{-5.2}{0.52}{-4.97}{0.51}\dl{-5.16}{0.5}{-4.97}{0.51}\dl{-5.12}{0.48}{-4.97}{0.51}\dl{-5.09}{0.46}{-4.97}{0.51}\dl{-5.07}{0.44}{-4.97}{0.51}
\dl{-4.81}{0.76}{-4.98}{0.97}\dl{-4.85}{0.78}{-4.98}{0.97}\dl{-4.89}{0.8}{-4.98}{0.97}\dl{-4.93}{0.82}{-4.98}{0.97}\dl{-4.96}{0.84}{-4.98}{0.97}\dl{-4.98}{0.86}{-4.98}{0.97}
\dl{-4.81}{0.76}{-5.08}{0.79}\dl{-4.85}{0.78}{-5.08}{0.79}\dl{-4.89}{0.8}{-5.08}{0.79}\dl{-4.93}{0.82}{-5.08}{0.79}\dl{-4.96}{0.84}{-5.08}{0.79}\dl{-4.98}{0.86}{-5.08}{0.79}
%
\dl{-7.31}{-2.11}{-7.05}{-2.06}\dl{-7.27}{-2.08}{-7.05}{-2.06}\dl{-7.24}{-2.05}{-7.05}{-2.06}\dl{-7.21}{-2.03}{-7.05}{-2.06}\dl{-7.18}{-2.01}{-7.05}{-2.06}\dl{-7.15}{-2}{-7.05}{-2.06}
\dl{-7.31}{-2.11}{-7.17}{-1.89}\dl{-7.27}{-2.08}{-7.17}{-1.89}\dl{-7.24}{-2.05}{-7.17}{-1.89}\dl{-7.21}{-2.03}{-7.17}{-1.89}\dl{-7.18}{-2.01}{-7.17}{-1.89}\dl{-7.15}{-2}{-7.17}{-1.89}
\dl{-7.26}{-0.94}{-7.52}{-0.97}\dl{-7.32}{-0.92}{-7.52}{-0.97}\dl{-7.36}{-0.91}{-7.52}{-0.97}\dl{-7.39}{-0.9}{-7.52}{-0.97}\dl{-7.42}{-0.89}{-7.52}{-0.97}\dl{-7.45}{-0.88}{-7.52}{-0.97}
\dl{-7.26}{-0.94}{-7.46}{-0.77}\dl{-7.32}{-0.92}{-7.46}{-0.77}\dl{-7.36}{-0.91}{-7.46}{-0.77}\dl{-7.39}{-0.9}{-7.46}{-0.77}\dl{-7.42}{-0.89}{-7.46}{-0.77}\dl{-7.45}{-0.88}{-7.46}{-0.77}
\dl{-8.44}{-1.7}{-8.53}{-1.75}\dl{-8.44}{-1.68}{-8.53}{-1.75}\dl{-8.45}{-1.65}{-8.53}{-1.75}\dl{-8.45}{-1.62}{-8.53}{-1.75}\dl{-8.46}{-1.59}{-8.53}{-1.75}\dl{-8.46}{-1.55}{-8.53}{-1.75}\dl{-8.47}{-1.5}{-8.53}{-1.75}
\dl{-8.44}{-1.7}{-8.33}{-1.71}\dl{-8.44}{-1.68}{-8.33}{-1.71}\dl{-8.45}{-1.65}{-8.33}{-1.71}\dl{-8.45}{-1.62}{-8.33}{-1.71}\dl{-8.46}{-1.59}{-8.33}{-1.71}\dl{-8.46}{-1.55}{-8.33}{-1.71}\dl{-8.47}{-1.5}{-8.33}{-1.71}
\dl{-7.12}{1.98}{-7.38}{2.03}\dl{-7.16}{2.01}{-7.38}{2.03}\dl{-7.19}{2.04}{-7.38}{2.03}\dl{-7.22}{2.06}{-7.38}{2.03}\dl{-7.25}{2.08}{-7.38}{2.03}\dl{-7.28}{2.09}{-7.38}{2.03}
\dl{-7.12}{1.98}{-7.26}{2.2}\dl{-7.16}{2.01}{-7.26}{2.2}\dl{-7.19}{2.04}{-7.26}{2.2}\dl{-7.22}{2.06}{-7.26}{2.2}\dl{-7.25}{2.08}{-7.26}{2.2}\dl{-7.28}{2.09}{-7.26}{2.2}
\dl{-7.51}{0.86}{-7.25}{0.83}\dl{-7.45}{0.88}{-7.25}{0.83}\dl{-7.41}{0.89}{-7.25}{0.83}\dl{-7.38}{0.9}{-7.25}{0.83}\dl{-7.35}{0.91}{-7.25}{0.83}\dl{-7.32}{0.92}{-7.25}{0.83}
\dl{-7.51}{0.86}{-7.31}{1.03}\dl{-7.45}{0.88}{-7.31}{1.03}\dl{-7.41}{0.89}{-7.31}{1.03}\dl{-7.38}{0.9}{-7.31}{1.03}\dl{-7.35}{0.91}{-7.31}{1.03}\dl{-7.32}{0.92}{-7.31}{1.03}
\dl{-8.46}{1.58}{-8.37}{1.53}\dl{-8.46}{1.6}{-8.37}{1.53}\dl{-8.45}{1.63}{-8.37}{1.53}\dl{-8.45}{1.66}{-8.37}{1.53}\dl{-8.44}{1.69}{-8.37}{1.53}\dl{-8.44}{1.73}{-8.37}{1.53}\dl{-8.43}{1.78}{-8.37}{1.53}
\dl{-8.46}{1.58}{-8.57}{1.57}\dl{-8.46}{1.6}{-8.57}{1.57}\dl{-8.45}{1.63}{-8.57}{1.57}\dl{-8.45}{1.66}{-8.57}{1.57}\dl{-8.44}{1.69}{-8.57}{1.57}\dl{-8.44}{1.73}{-8.57}{1.57}\dl{-8.43}{1.78}{-8.57}{1.57}
\dl{0.13}{-2.91}{-0.07}{-2.91}
\dl{0.13}{-2.91}{-0.13}{-2.81}\dl{0.09}{-2.91}{-0.13}{-2.81}\dl{0.05}{-2.91}{-0.13}{-2.81}\dl{0.01}{-2.91}{-0.13}{-2.81}\dl{-0.03}{-2.91}{-0.13}{-2.81}\dl{-0.07}{-2.91}{-0.13}{-2.81}
\dl{0.13}{-2.91}{-0.13}{-3.01}\dl{0.09}{-2.91}{-0.13}{-3.01}\dl{0.05}{-2.91}{-0.13}{-3.01}\dl{0.01}{-2.91}{-0.13}{-3.01}\dl{-0.03}{-2.91}{-0.13}{-3.01}\dl{-0.07}{-2.91}{-0.13}{-3.01}
\point{-9.35}{0.37}{\small $u_1$}\point{-9.07}{-2.9}{\small $u_2$}\point{8.87}{-0.62}{\small $u_4$}\point{8.52}{2.67}{\small $u_3$}
\point{-8.97}{2.67}{\small $v_1$}\point{-9.35}{-0.62}{\small $v_2$}\point{8.52}{-2.9}{\small $v_4$}\point{8.87}{0.37}{\small $v_3$}
\point{-6.3}{1.6}{\small $w_1$}\point{-6.3}{-1.8}{\small $w_2$}\point{5.6}{-1.8}{\small $w_4$}\point{5.6}{1.6}{\small $w_3$}
\point{-4.06}{0.38}{\small $x$}\point{3.78}{0.38}{\small $y$}
\ptll{-6.6}{1.37}{1}\ptll{-6.6}{-1.45}{2}\ptld{-3.9}{-0.2}{5}\ptlr{6.6}{-1.45}{4}\ptlr{6.6}{1.37}{3}\ptld{3.9}{-0.2}{6}
\ptld{-1.15}{2.6}{9}\ptld{-1.15}{1.3}{10}\ptlu{-1.05}{-1.65}{10}\ptlu{-1.15}{-0.54}{9}
\ptld{1.15}{2.6}{7}\ptld{1.15}{1.3}{7}\ptlu{1.15}{-1.65}{8}\ptlu{1.15}{-0.54}{8}
\ptlu{-8.31}{0.53}{11}\ptlu{-8.07}{-2.6}{11}
\ptld{-8.07}{2.57}{12}\ptld{-8.27}{-0.63}{12}
\ptld{8.23}{-0.65}{11}\ptld{8.07}{2.56}{11}
\ptlu{8.07}{-2.6}{12}\ptlu{8.23}{0.56}{12}
\ptlu{-8.76}{1.23}{7}\ptlu{-8.86}{-2.03}{8}
\ptld{8.86}{1.95}{9}\ptld{8.86}{-1.35}{10}
\ptlu{-7.1}{2.05}{13}\ptld{-7.31}{0.85}{14}
\ptlu{-7.31}{-0.85}{13}\ptld{-7.1}{-2.05}{14}
\ptlu{7.1}{2.05}{14}\ptld{7.31}{0.85}{13}
\ptlu{7.31}{-0.85}{14}\ptld{7.1}{-2.15}{13}
%
\ptlu{-4.88}{0.86}{15}\ptld{-4.88}{-0.86}{16}
\ptld{-5.6}{0.63}{16}\ptlu{-5.6}{-0.63}{15}
\ptlu{4.88}{0.82}{16}\ptld{4.88}{-0.9}{15}
\ptld{5.6}{0.63}{15}\ptlu{5.6}{-0.63}{16}
\ptlu{0}{2.4}{1}\ptlu{0}{0.8}{1}\ptlu{0}{-0.8}{2}\ptlu{0}{-2.4}{2}
\ptlu{0}{-5.2}{\textup{\small Figure 2. The digraphs $D$ and $H$ in Lemma \ref{lem1}.}}
\]\\[-3ex]
\begin{proof}
Let $H$ be the digraph consisting of the solid arcs as in Figure 2, and $D$ be the digraph obtained from $H$ by adding the dotted arc $xy$. It is not hard to see that the colouring for $H$ as shown in Figure 2, is strongly total rainbow connected, where for the eight pairs of symmetric arcs in the middle, the two arcs in each pair have the same colour. Thus, $\overset{\rightarrow}{s\smash{t}rc}(H)\leq16$. 

Now, we will show that $\overset{\rightarrow}{s\smash{t}rc}(D)\geq17$. Suppose that we have a strongly total rainbow connected colouring $c$ for $D$, using at most $16$ colours. Notice that any two cut-vertices of $D$ must receive distinct colours, and thus without loss of generality, we may assume that $w_{1},w_{2},w_{3},w_{4},x,y$ have colours $1,2,3,4,5,6$. Next, for $1\leq i\leq4$, we see that the arcs $u_{i}v_{i}$ must have distinct colours, and different from colours $1,2,3,4,5,6$. Otherwise we can find two vertices such that any geodesic in $D$ connecting them is not total-rainbow. Hence, we may assume that $c(u_iv_i)=i+6$, for $1\le i\le 4$. For the same reason, the arcs $w_iu_i,v_iw_i$ and vertices $u_i,v_i$ for $1\le i\le 4$, and the arcs $w_ix,xw_i$ for $i=1,2$, and $w_iy,yw_i$ for $i=3,4$, cannot use the colours $1,2,\dots,10$. Since the unique $u_1-v_2$ geodesic in $D$ is $u_1v_1w_1xw_2u_2v_2$, we may assume that $c(v_1)=12$, $c(v_1w_1)=13$, $c(w_1x)=15$, $c(xw_2)=16$, $c(w_2u_2)=14$, and $c(u_2)=11$. Finally, the unique $u_1-v_3$ geodesic in $D$ is $u_1v_1w_1xyw_3u_3v_3$, and the arcs $xy,yw_3,w_3u_3$ and internal vertex $u_3$ in this geodesic are not yet coloured. These elements cannot use the colours $1,2,\dots,10,12,13,15$. Thus the remaining possible colours are $11,14,16$, and this is insufficient to make the geodesic total-rainbow. We have a contradiction.  Hence $\overset{\rightarrow}{s\smash{t}rc}(D)\geq17$ and the result follows.
\end{proof}

Our final aim in this section is to compare the rainbow connection parameters. In \cite{KY2010}, Krivelevich and Yuster observed that for $rc(G)$ and $rvc(G)$, we cannot generally find an upper bound for one of the parameters in terms of the other. Indeed, by taking $G=K_{1,s}$, we have $rc(G) = s$ and $rvc(G) = 1$. On the other hand, let the graph $G_s$ be constructed as follows. Take $s$ vertex-disjoint triangles and, by designating a vertex from each triangle, add a complete graph $K_s$ on the designated vertices. Then $rc(G_s) \le 4$ and $rvc(G_s) = s$. In \cite{LLLS2017}, similar results were obtained for $\overset{\rightarrow}{rc}(D)$ and $\overset{\rightarrow}{rvc}(D)$, and for $\overset{\rightarrow}{src}(D)$ and $\overset{\rightarrow}{srvc}(D)$.

When considering the total connection number in addition, we have the following trivial inequalities.
\begin{align}
\overset{\rightarrow}{\smash{t}rc}(D) & \ge\max(\overset{\rightarrow}{rc}(D),\overset{\rightarrow}{rvc}(D)),\label{maxeq1}\\
\overset{\rightarrow}{s\smash{t}rc}(D) & \ge\max(\overset{\rightarrow}{src}(D),\overset{\rightarrow}{srvc}(D)).\label{maxeq2}
\end{align}

In the following result, we see that there are infinitely many digraphs where the inequalities (\ref{maxeq1}) and (\ref{maxeq2}) are best possible.

\begin{thm}\label{compequalthm}
\indent\\[-3ex]
\begin{enumerate}
\item[(a)] There exist infinitely many strongly connected digraphs $D$ with $\overset{\rightarrow}{\smash{t}rc}(D) =\overset{\rightarrow}{s\smash{t}rc}(D) = \overset{\rightarrow}{rc}(D)=\overset{\rightarrow}{src}(D)=3$.
\item[(b)] Given $s\ge 13$, there exists a strongly connected digraph $D$ with $\overset{\rightarrow}{\smash{t}rc}(D) = \overset{\rightarrow}{rvc}(D)=s$.
\item[(c)] Given $s\ge 13$, there exists a strongly connected digraph $D$ with $\overset{\rightarrow}{s\smash{t}rc}(D) =\overset{\rightarrow}{srvc}(D)=s$.
\end{enumerate}
\end{thm}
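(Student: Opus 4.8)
The plan is to dispatch (a) with a small explicit family already in hand, and to handle (b) and (c) by adapting the Krivelevich--Yuster graph $G_s$ to digraphs; the lower bounds will be free from (\ref{maxeq1}) and (\ref{maxeq2}), and the real work will be a single $s$-colour total-colouring.

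For (a) I would take $D=D_k=(\rC_3)_{u\to \lrK_k}$ for $k\ge 1$, the digraphs considered in the remark following Theorem \ref{pro2}. These are pairwise non-isomorphic (they have $k+2$ vertices), so they form an infinite family, and there it is recorded that $\overset{\rightarrow}{rc}(D_k)=3$ and $\overset{\rightarrow}{\smash{t}rc}(D_k)=3$. From $\overset{\rightarrow}{\smash{t}rc}(D_k)=3$ and Theorem \ref{pro2}(c)(iv) I immediately get $\overset{\rightarrow}{s\smash{t}rc}(D_k)=3$. Finally, combining $\overset{\rightarrow}{rc}(D_k)\le\overset{\rightarrow}{src}(D_k)$ with (\ref{maxeq2}), which gives $\overset{\rightarrow}{s\smash{t}rc}(D_k)\ge\overset{\rightarrow}{src}(D_k)$, I obtain $3=\overset{\rightarrow}{rc}(D_k)\le\overset{\rightarrow}{src}(D_k)\le\overset{\rightarrow}{s\smash{t}rc}(D_k)=3$, so all four parameters equal $3$. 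Thus only bookkeeping among already-established facts is needed here.

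For (b) and (c) I would build a digraph $D$ from an $s$-vertex hub and $s$ local gadgets in the spirit of $G_s$: take a bioriented clique on hub vertices $z_1,\dots,z_s$, and attach to each $z_i$ a fixed small bioriented gadget. The gadgets are chosen so that every directed path between two gadget vertices at distinct hubs $z_i,z_j$ must use both $z_i$ and $z_j$ as internal vertices; this forces all the $z_i$ to receive distinct colours in any (strongly) rainbow vertex-connected colouring, giving $\overset{\rightarrow}{rvc}(D)=s$ (resp.\ $\overset{\rightarrow}{srvc}(D)=s$), exactly as for $G_s$ and in line with the analogous digraph results of \cite{LLLS2017}. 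The lower bounds then come for free: (\ref{maxeq1}) gives $\overset{\rightarrow}{\smash{t}rc}(D)\ge\overset{\rightarrow}{rvc}(D)=s$, and (\ref{maxeq2}) gives $\overset{\rightarrow}{s\smash{t}rc}(D)\ge\overset{\rightarrow}{srvc}(D)=s$.

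The substance is the matching upper bound: a single total-colouring with exactly $s$ colours that is (strongly) total rainbow connected. I would colour $z_i$ with colour $i$ and then colour the gadget arcs and the hub arcs $z_iz_j$ so that every ordered pair is joined by a total-rainbow (geodesic) path. The key obstruction, and the reason a single-pendant gadget fails, is that a gadget vertex $a_i$ must reach all $s$ hubs $z_j$ along total-rainbow paths on which $z_j$ is an internal vertex of colour $j$; one exit arc out of $a_i$ cannot avoid all $s$ hub colours at once, so the gadget must offer several alternative exit routes whose first arcs carry different colours. A bioriented triangle already supplies two exits, which resolves (b), and I expect $s\ge 13$ to be precisely what guarantees that, simultaneously for all pairs, one can select an exit arc, a hub arc, and an entry arc all distinct from one another and from the two hub-vertex colours involved. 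I would confirm total-rainbow connectivity by a short case analysis over the pair types: gadget--gadget at distinct hubs, gadget--hub, hub--hub, and within a single gadget.

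The hard part is this upper bound, and within it the strong version (c) is the more delicate, since between two gadget vertices at different hubs the shortest paths are short and nearly forced, leaving little freedom to route around colour clashes (a triangle gadget, whose leaf-to-leaf geodesic is unique, provably cannot be $s$-coloured strongly). I therefore expect the gadget in (c) to be engineered so that such pairs admit \emph{several} geodesics of the common shortest length, and the constant $13$ to be tuned to make the resulting system of distinctness constraints solvable with $s$ colours. Once the gadget and an explicit colouring are fixed, checking the remaining (generously short) pairs is routine.
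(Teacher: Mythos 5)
Your part (a) is correct and coincides with the paper's own argument: taking $D_n=(\rC_3)_{u\to\lrK_{n-2}}$, the values $\overset{\rightarrow}{rc}(D_n)=\overset{\rightarrow}{\smash{t}rc}(D_n)=3$ recorded after Theorem \ref{pro2}, part (c)(iv) of that theorem, and the chain $3=\overset{\rightarrow}{rc}(D_n)\le\overset{\rightarrow}{src}(D_n)\le\overset{\rightarrow}{s\smash{t}rc}(D_n)=3$ settle all four parameters. Your framing of (b) and (c) is also sound: the lower bounds are indeed free from (\ref{maxeq1}) and (\ref{maxeq2}) once $\overset{\rightarrow}{rvc}(D)\ge s$, resp.\ $\overset{\rightarrow}{srvc}(D)\ge s$, is established, and your parenthetical diagnosis for (c) is right --- if the unique $v_i$--$v_j$ geodesic is $v_iu_iu_jv_j$ for all $j\ne i$, then the arc $v_iu_i$ must avoid the colours of $u_i$ and of every $u_j$, so $s$ colours cannot suffice for the strong parameter on the bioriented Krivelevich--Yuster graph.

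However, the proposal defers exactly the steps that constitute the proof. For (b) you never produce the $s$-colour total-colouring; note that the paper does not produce one either --- it takes $D_s=\lrG_s$ and gets the upper bound from Proposition \ref{pro3} together with the cited result $trc(G_s)=s$ (\cite{LMS2014}, Theorem 11), which is also where the hypothesis $s\ge 13$ actually enters; if you colour from scratch, that colouring is a genuine piece of work you have not done. For (c) the gap is deeper: neither the gadget nor the colouring is specified, and the two design requirements you state pull against each other. Requirement (1), that every path between gadget vertices at distinct hubs passes through both hubs internally (your lower-bound mechanism), combined with your own ``exit arc, hub arc, entry arc'' picture of length-$3$ geodesics, forces those geodesics to be unique, and then the $(s{+}1)$-colour obstruction you identified for triangles applies verbatim; requirement (2), several geodesics of common length, is therefore impossible under (1) unless the geodesics are longer and branch inside the gadget, a design you neither propose nor analyse. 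The paper resolves the tension by abandoning (1): in $H_s$ the extra vertices $z_i$ (adjacent to $u_i,u_{i+1},v_i,w_{i-4}$) give many pairs bypass geodesics such as $v_1z_1u_2v_2$ that avoid one hub entirely, while for every pair of hubs some specially chosen outer pair (the cases $i\in\{5,6\}$, $i\in\{s-4,s-3\}$, and the generic case) still has its unique geodesic through both hubs, so $srvc(H_s)\ge s$ survives; the matching colouring is then verified over a six-case table. Finding such an asymmetric construction, re-proving the lower bound against it, and checking the colouring is the substance of (c), and none of it is present in your proposal.
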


\begin{proof}
(a) We may simply consider the digraphs $D_n$ as described after the proof of Theorem \ref{pro2}. That is, $D_n$ is the digraph on $n\ge 3$ vertices, obtained by expanding a vertex of $\rC_3$ into $\lrK_{n-2}$. We have $\overset{\rightarrow}{\smash{t}rc}(D_n)=\overset{\rightarrow}{s\smash{t}rc}(D_n)=\overset{\rightarrow}{rc}(D_n)=\overset{\rightarrow}{src}(D_n)=3$.\\[1ex]
\indent (b) For $s\ge 13$, let $G_s$ be the simple graph with $s$ disjoint triangles attached to $K_s$, as described above. Let $D_s=\lrG_s$. Then by (\ref{prop3eq}), (\ref{maxeq1}), and (\ref{LLLSrmk}), we have $trc(G_s)\ge \overset{\rightarrow}{\smash{t}rc}(D_s)\ge \overset{\rightarrow}{rvc}(D_s)= rvc(G_s)=s$. In \cite{LMS2014}, Theorem 11, it was proved that $trc(G_s)=s$. Thus we have $\overset{\rightarrow}{\smash{t}rc}(D_s) = \overset{\rightarrow}{rvc}(D_s)=s$.\\[1ex]
\indent(c) For $s\ge 13$, we construct the simple graph $H_s$ as follows. First, we take the graph $G_s$, and let $u_1,\dots, u_s$ be the vertices of the $K_s$, and the remaining vertices are $v_i, w_i$, where $u_iv_iw_i$ is a triangle,  for $1\le i\le s$. We then add new vertices $z_1,\dots,z_s$, and add the edges $u_iz_i, u_{i+1}z_i, v_iz_i, w_iz_{i+4}$, for $1\le i\le s$. Throughout, all indices are taken modulo $s$. See Figure 3 for the case $s=13$.\\[1ex]
\[\unit = 0.6cm
\pt{ 0 }{ 4.5 }
\pt{ 2.0912542742 }{ 3.98455211544 }
\pt{ 3.70342739652 }{ 2.55629136029 }
\pt{ 4.46718993344 }{ 0.542415061149 }
\pt{ 4.20757309208 }{ -1.59572199169 }
\pt{ 2.98405196208 }{ -3.36829836677 }
\pt{ 1.07692048929 }{ -4.36923817842 }
\pt{ -1.07692048929 }{ -4.36923817842 }
\pt{ -2.98405196208 }{ -3.36829836677 }
\pt{ -4.20757309208 }{ -1.59572199169 }
\pt{ -4.46718993344 }{ 0.542415061149 }
\pt{ -3.70342739652 }{ 2.55629136029 }
\pt{ -2.0912542742 }{ 3.98455211544 }
%
\pt{ -0.723220081532 }{ 5.95625324459 }
\pt{ 0.723220081532 }{ 5.95625324459 }
\pt{ 2.12762932226 }{ 5.61009745611 }
\pt{ 3.40838848039 }{ 4.93790319536 }
\pt{ 4.49106448903 }{ 3.97873594944 }
\pt{ 5.31273615392 }{ 2.78833903226 }
\pt{ 5.82565090456 }{ 1.43589398573 }
\pt{ 6 }{ 0 }
\pt{ 5.82565090456 }{ -1.43589398573 }
\pt{ 5.31273615392 }{ -2.78833903226 }
\pt{ 4.49106448903 }{ -3.97873594944 }
\pt{ 3.40838848039 }{ -4.93790319536 }
\pt{ 2.12762932226 }{ -5.61009745611 }
\pt{ 0.723220081532 }{ -5.95625324459 }
\pt{ -0.723220081532 }{ -5.95625324459 }
\pt{ -2.12762932226 }{ -5.61009745611 }
\pt{ -3.40838848039 }{ -4.93790319536 }
\pt{ -4.49106448903 }{ -3.97873594944 }
\pt{ -5.31273615392 }{ -2.78833903226 }
\pt{ -5.82565090456 }{ -1.43589398573 }
\pt{ -6 }{ 0 }
\pt{ -5.82565090456 }{ 1.43589398573 }
\pt{ -5.31273615392 }{ 2.78833903226 }
\pt{ -4.49106448903 }{ 3.97873594944 }
\pt{ -3.40838848039 }{ 4.93790319536 }
\pt{ -2.12762932226 }{ 5.61009745611 }
\pt{ 1.26837302072 }{ 5.14599163236 }
\pt{ 3.51455008868 }{ 3.96710696531 }
\pt{ 4.95558608623 }{ 1.87940590133 }
\pt{ 5.26135703272 }{ -0.638844405353 }
\pt{ 4.36181448924 }{ -3.01074315768 }
\pt{ 2.46303281183 }{ -4.69291693596 }
\pt{ 0 }{ -5.3 }
\pt{ -2.46303281183 }{ -4.69291693596 }
\pt{ -4.36181448924 }{ -3.01074315768 }
\pt{ -5.26135703272 }{ -0.638844405353 }
\pt{ -4.95558608623 }{ 1.87940590133 }
\pt{ -3.51455008868 }{ 3.96710696531 }
\pt{ -1.26837302072 }{ 5.14599163236 }
\dotline
\dl{ 0 }{ 4.5 }{ 2.0912542742 }{ 3.98455211544 }
\dl{ 2.0912542742 }{ 3.98455211544 }{ 3.70342739652 }{ 2.55629136029 }
\dl{ 3.70342739652 }{ 2.55629136029 }{ 4.46718993344 }{ 0.542415061149 }
\dl{ 4.46718993344 }{ 0.542415061149 }{ 4.20757309208 }{ -1.59572199169 }
\dl{ 4.20757309208 }{ -1.59572199169 }{ 2.98405196208 }{ -3.36829836677 }
\dl{ 2.98405196208 }{ -3.36829836677 }{ 1.07692048929 }{ -4.36923817842 }
\dl{ 1.07692048929 }{ -4.36923817842 }{ -1.07692048929 }{ -4.36923817842 }
\dl{ -1.07692048929 }{ -4.36923817842 }{ -2.98405196208 }{ -3.36829836677 }
\dl{ -2.98405196208 }{ -3.36829836677 }{ -4.20757309208 }{ -1.59572199169 }
\dl{ -4.20757309208 }{ -1.59572199169 }{ -4.46718993344 }{ 0.542415061149 }
\dl{ -4.46718993344 }{ 0.542415061149 }{ -3.70342739652 }{ 2.55629136029 }
\dl{ -3.70342739652 }{ 2.55629136029 }{ -2.0912542742 }{ 3.98455211544 }
\dl{ -2.0912542742 }{ 3.98455211544 }{ 0 }{ 4.5 }
\thnline
\dl{ 0 }{ 4.5 }{ -0.723220081532 }{ 5.95625324459 }
\dl{ 2.0912542742 }{ 3.98455211544 }{ 2.12762932226 }{ 5.61009745611 }
\dl{ 3.70342739652 }{ 2.55629136029 }{ 4.49106448903 }{ 3.97873594944 }
\dl{ 4.46718993344 }{ 0.542415061149 }{ 5.82565090456 }{ 1.43589398573 }
\dl{ 4.20757309208 }{ -1.59572199169 }{ 5.82565090456 }{ -1.43589398573 }
\dl{ 2.98405196208 }{ -3.36829836677 }{ 4.49106448903 }{ -3.97873594944 }
\dl{ 1.07692048929 }{ -4.36923817842 }{ 2.12762932226 }{ -5.61009745611 }
\dl{ -1.07692048929 }{ -4.36923817842 }{ -0.723220081532 }{ -5.95625324459 }
\dl{ -2.98405196208 }{ -3.36829836677 }{ -3.40838848039 }{ -4.93790319536 }
\dl{ -4.20757309208 }{ -1.59572199169 }{ -5.31273615392 }{ -2.78833903226 }
\dl{ -4.46718993344 }{ 0.542415061149 }{ -6 }{ 0 }
\dl{ -3.70342739652 }{ 2.55629136029 }{ -5.31273615392 }{ 2.78833903226 }
\dl{ -2.0912542742 }{ 3.98455211544 }{ -3.40838848039 }{ 4.93790319536 }
\dl{ -0.723220081532 }{ 5.95625324459 }{ 0.723220081532 }{ 5.95625324459 }
\dl{ 2.12762932226 }{ 5.61009745611 }{ 3.40838848039 }{ 4.93790319536 }
\dl{ 4.49106448903 }{ 3.97873594944 }{ 5.31273615392 }{ 2.78833903226 }
\dl{ 5.82565090456 }{ 1.43589398573 }{ 6 }{ 0 }
\dl{ 5.82565090456 }{ -1.43589398573 }{ 5.31273615392 }{ -2.78833903226 }
\dl{ 4.49106448903 }{ -3.97873594944 }{ 3.40838848039 }{ -4.93790319536 }
\dl{ 2.12762932226 }{ -5.61009745611 }{ 0.723220081532 }{ -5.95625324459 }
\dl{ -0.723220081532 }{ -5.95625324459 }{ -2.12762932226 }{ -5.61009745611 }
\dl{ -3.40838848039 }{ -4.93790319536 }{ -4.49106448903 }{ -3.97873594944 }
\dl{ -5.31273615392 }{ -2.78833903226 }{ -5.82565090456 }{ -1.43589398573 }
\dl{ -6 }{ 0 }{ -5.82565090456 }{ 1.43589398573 }
\dl{ -5.31273615392 }{ 2.78833903226 }{ -4.49106448903 }{ 3.97873594944 }
\dl{ -3.40838848039 }{ 4.93790319536 }{ -2.12762932226 }{ 5.61009745611 }
\dl{ 0.723220081532 }{ 5.95625324459 }{ 0 }{ 4.5 }
\dl{ 3.40838848039 }{ 4.93790319536 }{ 2.0912542742 }{ 3.98455211544 }
\dl{ 5.31273615392 }{ 2.78833903226 }{ 3.70342739652 }{ 2.55629136029 }
\dl{ 6 }{ 0 }{ 4.46718993344 }{ 0.542415061149 }
\dl{ 5.31273615392 }{ -2.78833903226 }{ 4.20757309208 }{ -1.59572199169 }
\dl{ 3.40838848039 }{ -4.93790319536 }{ 2.98405196208 }{ -3.36829836677 }
\dl{ 0.723220081532 }{ -5.95625324459 }{ 1.07692048929 }{ -4.36923817842 }
\dl{ -2.12762932226 }{ -5.61009745611 }{ -1.07692048929 }{ -4.36923817842 }
\dl{ -4.49106448903 }{ -3.97873594944 }{ -2.98405196208 }{ -3.36829836677 }
\dl{ -5.82565090456 }{ -1.43589398573 }{ -4.20757309208 }{ -1.59572199169 }
\dl{ -5.82565090456 }{ 1.43589398573 }{ -4.46718993344 }{ 0.542415061149 }
\dl{ -4.49106448903 }{ 3.97873594944 }{ -3.70342739652 }{ 2.55629136029 }
\dl{ -2.12762932226 }{ 5.61009745611 }{ -2.0912542742 }{ 3.98455211544 }
\dl{ 0 }{ 4.5 }{ 1.26837302072 }{ 5.14599163236 }
\dl{ 1.26837302072 }{ 5.14599163236 }{ 2.0912542742 }{ 3.98455211544 }
\dl{ 2.0912542742 }{ 3.98455211544 }{ 3.51455008868 }{ 3.96710696531 }
\dl{ 3.51455008868 }{ 3.96710696531 }{ 3.70342739652 }{ 2.55629136029 }
\dl{ 3.70342739652 }{ 2.55629136029 }{ 4.95558608623 }{ 1.87940590133 }
\dl{ 4.95558608623 }{ 1.87940590133 }{ 4.46718993344 }{ 0.542415061149 }
\dl{ 4.46718993344 }{ 0.542415061149 }{ 5.26135703272 }{ -0.638844405353 }
\dl{ 5.26135703272 }{ -0.638844405353 }{ 4.20757309208 }{ -1.59572199169 }
\dl{ 4.20757309208 }{ -1.59572199169 }{ 4.36181448924 }{ -3.01074315768 }
\dl{ 4.36181448924 }{ -3.01074315768 }{ 2.98405196208 }{ -3.36829836677 }
\dl{ 2.98405196208 }{ -3.36829836677 }{ 2.46303281183 }{ -4.69291693596 }
\dl{ 2.46303281183 }{ -4.69291693596 }{ 1.07692048929 }{ -4.36923817842 }
\dl{ 1.07692048929 }{ -4.36923817842 }{ 0 }{ -5.3 }
\dl{ 0 }{ -5.3 }{ -1.07692048929 }{ -4.36923817842 }
\dl{ -1.07692048929 }{ -4.36923817842 }{ -2.46303281183 }{ -4.69291693596 }
\dl{ -2.46303281183 }{ -4.69291693596 }{ -2.98405196208 }{ -3.36829836677 }
\dl{ -2.98405196208 }{ -3.36829836677 }{ -4.36181448924 }{ -3.01074315768 }
\dl{ -4.36181448924 }{ -3.01074315768 }{ -4.20757309208 }{ -1.59572199169 }
\dl{ -4.20757309208 }{ -1.59572199169 }{ -5.26135703272 }{ -0.638844405353 }
\dl{ -5.26135703272 }{ -0.638844405353 }{ -4.46718993344 }{ 0.542415061149 }
\dl{ -4.46718993344 }{ 0.542415061149 }{ -4.95558608623 }{ 1.87940590133 }
\dl{ -4.95558608623 }{ 1.87940590133 }{ -3.70342739652 }{ 2.55629136029 }
\dl{ -3.70342739652 }{ 2.55629136029 }{ -3.51455008868 }{ 3.96710696531 }
\dl{ -3.51455008868 }{ 3.96710696531 }{ -2.0912542742 }{ 3.98455211544 }
\dl{ -2.0912542742 }{ 3.98455211544 }{ -1.26837302072 }{ 5.14599163236 }
\dl{ -1.26837302072 }{ 5.14599163236 }{ 0 }{ 4.5 }
\dl{ -0.723220081532 }{ 5.95625324459 }{ 1.26837302072 }{ 5.14599163236 }
\dl{ 2.12762932226 }{ 5.61009745611 }{ 3.51455008868 }{ 3.96710696531 }
\dl{ 4.49106448903 }{ 3.97873594944 }{ 4.95558608623 }{ 1.87940590133 }
\dl{ 5.82565090456 }{ 1.43589398573 }{ 5.26135703272 }{ -0.638844405353 }
\dl{ 5.82565090456 }{ -1.43589398573 }{ 4.36181448924 }{ -3.01074315768 }
\dl{ 4.49106448903 }{ -3.97873594944 }{ 2.46303281183 }{ -4.69291693596 }
\dl{ 2.12762932226 }{ -5.61009745611 }{ 0 }{ -5.3 }
\dl{ -0.723220081532 }{ -5.95625324459 }{ -2.46303281183 }{ -4.69291693596 }
\dl{ -3.40838848039 }{ -4.93790319536 }{ -4.36181448924 }{ -3.01074315768 }
\dl{ -5.31273615392 }{ -2.78833903226 }{ -5.26135703272 }{ -0.638844405353 }
\dl{ -6 }{ 0 }{ -4.95558608623 }{ 1.87940590133 }
\dl{ -5.31273615392 }{ 2.78833903226 }{ -3.51455008868 }{ 3.96710696531 }
\dl{ -3.40838848039 }{ 4.93790319536 }{ -1.26837302072 }{ 5.14599163236 }
\dl{ 0.723220081532 }{ 5.95625324459 }{ 4.36181448924 }{ -3.01074315768 }
\dl{ 3.40838848039 }{ 4.93790319536 }{ 2.46303281183 }{ -4.69291693596 }
\dl{ 5.31273615392 }{ 2.78833903226 }{ 0 }{ -5.3 }
\dl{ 6 }{ 0 }{ -2.46303281183 }{ -4.69291693596 }
\dl{ 5.31273615392 }{ -2.78833903226 }{ -4.36181448924 }{ -3.01074315768 }
\dl{ 3.40838848039 }{ -4.93790319536 }{ -5.26135703272 }{ -0.638844405353 }
\dl{ 0.723220081532 }{ -5.95625324459 }{ -4.95558608623 }{ 1.87940590133 }
\dl{ -2.12762932226 }{ -5.61009745611 }{ -3.51455008868 }{ 3.96710696531 }
\dl{ -4.49106448903 }{ -3.97873594944 }{ -1.26837302072 }{ 5.14599163236 }
\dl{ -5.82565090456 }{ -1.43589398573 }{ 1.26837302072 }{ 5.14599163236 }
\dl{ -5.82565090456 }{ 1.43589398573 }{ 3.51455008868 }{ 3.96710696531 }
\dl{ -4.49106448903 }{ 3.97873594944 }{ 4.95558608623 }{ 1.87940590133 }
\dl{ -2.12762932226 }{ 5.61009745611 }{ 5.26135703272 }{ -0.638844405353 }
%
\point{-0.3}{4}{\small $u_1$}
\point{ 1.77 }{ 3.55}{\small $u_2$}
\point{ 3.1 }{ 2.2 }{\small $u_3$}
\point{ 3.75 }{ 0.43 }{\small $u_4$}
\point{ 3.48 }{ -1.6 }{\small $u_5$}
\point{ 2.5 }{ -3.1 }{\small $u_6$}
\point{ 0.75 }{ -4.08 }{\small $u_7$}
\point{ -1.2 }{ -4.08 }{\small $u_8$}
\point{ -2.95 }{ -3.2 }{\small $u_9$}
\point{ -4.1 }{ -1.6 }{\small $u_{10}$}
\point{ -4.3 }{ 0.43 }{\small $u_{11}$}
\point{ -3.65 }{ 2.2 }{\small $u_{12}$}
\point{ -2.05 }{ 3.55 }{\small $u_{13}$}
\point{ -0.95 }{ 6.2 }{\small $v_1$}
\point{ 0.47 }{ 6.2 }{\small $w_1$}
\point{ 1.9 }{ 5.85 }{\small $v_2$}
\point{ 3.2 }{ 5.13 }{\small $w_2$}
\point{ 4.3 }{ 4.2 }{\small $v_3$}
\point{ 5.45 }{ 2.6 }{\small $w_3$}
\point{ 5.95 }{ 1.44 }{\small $v_4$}
\point{ 6.15 }{ -0.1 }{\small $w_4$}
\point{ 5.98 }{ -1.55 }{\small $v_5$}
\point{ 5.45 }{ -3 }{\small $w_5$}
\point{ 4.65 }{ -4.1 }{\small $v_6$}
\point{ 3.2 }{ -5.4 }{\small $w_6$}
\point{ 2 }{ -6.1 }{\small $v_7$}
\point{ 0.4 }{ -6.4 }{\small $w_7$}
\point{ -0.95 }{ -6.4 }{\small $v_8$}
\point{ -2.55 }{ -6.1 }{\small $w_8$}
\point{ -3.7 }{ -5.4 }{\small $v_9$}
\point{ -5.3 }{ -4.1 }{\small $w_9$}
\point{ -6.18 }{ -3 }{\small $v_{10}$}
\point{ -6.85 }{ -1.55 }{\small $w_{10}$}
\point{ -6.88 }{ -0.1 }{\small $v_{11}$}
\point{ -6.8 }{ 1.44 }{\small $w_{11}$}
\point{ -6.25 }{ 2.6 }{\small $v_{12}$}
\point{ -5.5 }{ 4.2 }{\small $w_{12}$}
\point{ -4.3 }{ 5.13 }{\small $v_{13}$}
\point{ -2.6 }{ 5.85 }{\small $w_{13}$}
\point{ 1.15 }{ 5.35 }{\small $z_1$}
\point{ 3.52 }{ 4.2 }{\small $z_2$}
\point{ 5.08 }{ 1.88 }{\small $z_3$}
\point{ 5.4 }{ -0.75 }{\small $z_4$}
\point{ 4.4 }{ -3.3 }{\small $z_5$}
\point{ 2.3 }{ -5.1 }{\small $z_6$}
\point{ -0.3 }{ -5.7 }{\small $z_7$}
\point{ -2.8 }{ -5.1 }{\small $z_8$}
\point{ -4.98 }{ -3.3 }{\small $z_9$}
\point{ -6.15 }{ -0.75 }{\small $z_{10}$}
\point{ -5.85 }{ 1.88 }{\small $z_{11}$}
\point{ -3.9 }{ 4.2 }{\small $z_{12}$}
\point{ -1.5 }{ 5.35 }{\small $z_{13}$}
\point{ -0.4 }{ -0.2 }{\small $K_{13}$}
\ptlu{0}{-8}{\textup{\small Figure 3. The graph $H_s$ when $s=13$. The polygon with dotted lines represents the copy of $K_s$.}}
\]\\[-2ex]

Now let $D_s=\lrH_s$. Then by (\ref{prop3eq}), (\ref{maxeq2}), and (\ref{LLLSrmk}), we have $strc(H_s)\ge \overset{\rightarrow}{s\smash{t}rc}(D_s)\ge \overset{\rightarrow}{srvc}(D_s)= srvc(H_s)$. Thus it suffices to prove that $srvc(H_s)\ge s$ and $strc(H_s)\le s$, which would imply $\overset{\rightarrow}{s\smash{t}rc}(D_s)=\overset{\rightarrow}{srvc}(D_s)=s$.

We first prove that $srvc(H_s)\ge s$. Suppose that we have a vertex-colouring of $H_s$, using fewer than $s$ colours. Then some two vertices of $u_1,\dots, u_s$ have the same colour. We may assume that $u_1$ and $u_i$ have the same colour, for some $2\le i\le s$. If $i\not\in\{5,6,s-4,s-3\}$, then the unique $w_1-w_i$ geodesic is $w_1u_1u_iw_i$. If $i\in\{5,6\}$, then the unique $v_1-w_i$ geodesic is $v_1u_1u_iw_i$. If $i\in\{s-4,s-3\}$, then the unique $w_1-v_i$ geodesic is $w_1u_1u_iv_i$. In each case, we have two vertices in $H_s$ such that there is no vertex-rainbow geodesic connecting them. Thus $srvc(H_s)\ge s$.

Now we prove that $strc(H_s)\le s$. We define a total-colouring $c$ of $H_s$, using the colours $1,\dots,s$, as follows. For $1\le i\le s$, let $c(u_i)=c(w_iz_{i+4})=i$, $c(u_iv_i)=i+1$, $c(v_i)=i+2$, $c(v_iw_i)=c(u_{i+1}z_i)=i+3$, $c(w_i)=c(z_i)=i+4$, $c(w_iu_i)=c(u_iz_i)=c(v_iz_i)=i+5$, all modulo $s$. For $i\neq j$, let $c(u_iu_j)$ be a colour not in $\{i,\dots,i+5\}\cup\{j,\dots,j+5\}$. Such a colour exists since $s\ge 13$. We will show that $c$ is a strongly total rainbow connected colouring for $H_s$, which implies that $strc(H_s)\le s$. Let $x,y\in V(H_s)$. We show that there always exists a total-rainbow $x-y$ geodesic $P$. If at least one of $x,y$ belongs to $\{u_1,\dots,u_s\}$, or if $x,y\in \{v_i,w_i,z_i\}$ for some $i$, then clearly such an $x-y$ geodesic exists, with length at most $2$. Thus, it suffices to consider $x\in\{v_1,w_1,z_1\}$ and the six cases as shown in the following table, where $2\le i\le s$ in each case.

\begin{table}[ht]
\begin{center}
\begin{tabular}{|c|c|ll|l|}
\hline
$x$ & $y$ &
\multicolumn{2}{c}{$P$}&\multicolumn{1}{|c|}{Internal colours of $P$}\\
\hline
\multirow{3}{*}{$v_1$} & \multirow{3}{*}{$v_i$} & $v_1u_1u_iv_i$ & if $i\not\in\{2,s\}$ & $2,1,t,i,i+1$\\
&& $v_1z_1u_2v_2$ & if $i=2$ & $6,5,4,2,3$\\
&& $v_1u_1z_sv_s$ & if $i=s$ & $2,1,3,4,5$\\
\hline
\multirow{4}{*}{$v_1$} & \multirow{4}{*}{$w_i$} & $v_1u_1u_iw_i$ & if $i\not\in\{2,s-4,s-3\}$ & $2,1,t,i,i+5$\\
&& $v_1z_1u_2w_2$ & if $i=2$ & $6,5,4,2,7$\\
&& $v_1u_1z_sw_{s-4}$ & if $i=s-4$ & $2,1,3,4,s-4$\\
&& $v_1z_1w_{s-3}$ & if $i=s-3$ & $6,5,s-3$\\
\hline
\multirow{4}{*}{$v_1$} & \multirow{4}{*}{$z_i$} & $v_1u_1u_iz_i$ & if $i\not\in\{2,5,s-4,s-3,s\}$ & $2,1,t,i,i+5$\\
&& $v_1u_1u_{i+1}z_i$ & if $i\in\{2,s-4,s-3\}$ & $2,1,t,i+1,i+3$\\
&& $v_1w_1z_5$ & if $i=5$ & $4,5,1$\\
&& $v_1u_1z_s$ & if $i=s$ & $2,1,3$\\
\hline
\multirow{3}{*}{$w_1$} & \multirow{3}{*}{$w_i$} & $w_1u_1u_iw_i$ & if $i\not\in\{6,s-4\}$ & $6,1,t,i,i+5$\\
&& $w_1z_5u_6w_6$ & if $i=6$ & $1,9,8,6,11$\\
&& $w_1u_1z_sw_{s-4}$ & if $i=s-4$ & $6,1,3,4,s-4$\\
\hline
\multirow{4}{*}{$w_1$} & \multirow{4}{*}{$z_i$} & $w_1u_1u_iz_i$ & if $i\not\in\{5,6,s-4,s\}$ & $6,1,t,i,i+5$\\
&& $w_1z_5$ & if $i=5$ & $1$\\
&& $w_1u_1u_{i+1}z_i$ & if $i\in\{6,s-4\}$ & $6,1,t,i+1,i+3$\\
&& $w_1u_1z_s$ & if $i=s$ & $6,1,3$\\
\hline
\multirow{4}{*}{$z_1$} & \multirow{4}{*}{$z_i$} & $z_1u_1u_iz_i$ & if $i\not\in\{2,6,s-4,s\}$ & $6,1,t,i,i+5$\\
&& $z_1u_2z_2$ & if $i=2$ & $4,2,7$\\
&& $z_1u_1u_{i+1}z_i$ & if $i\in\{6,s-4\}$ & $6,1,t,i+1,i+3$\\
&& $z_1u_1z_s$ & if $i=s$ & $6,1,3$\\
\hline
\end{tabular}
\end{center}
\indent\\[-1.9cm]
\end{table}
\indent\\[0.5cm]
\indent In each case, we have a desired $x-y$ geodesic $P$, where $t$ is some colour different from the remaining four colours. For example, in the very first case, we have $t\not\in\{1,2,i,i+1\}$. The proof is thus complete.
\end{proof}

We may also consider how far from equality we can be in (\ref{maxeq1}) and (\ref{maxeq2}). In the following result, we see that there is an infinite family of digraphs $\mathcal D$ such that $\overset{\rightarrow}{\smash{t}rc}(D)$ is unbounded on $\mathcal D$, while $\overset{\rightarrow}{rc}(D)$ is bounded. Similar results also hold for $\overset{\rightarrow}{\smash{t}rc}(D)$ in comparison with $\overset{\rightarrow}{rvc}(D)$, and for $\overset{\rightarrow}{s\smash{t}rc}(D)$ in comparison with each of $\overset{\rightarrow}{src}(D)$ and $\overset{\rightarrow}{srvc}(D)$.

\begin{thm}\label{compfarthm}
\indent\\[-3ex]
\begin{enumerate}
\item[(a)] Given $s\ge 2$, there exists a strongly connected digraph $D$ such that $\overset{\rightarrow}{s\smash{t}rc}(D)\ge \overset{\rightarrow}{\smash{t}rc}(D)\ge s$ and $\overset{\rightarrow}{rc}(D)=\overset{\rightarrow}{src}(D)=3$.
\item[(b)]Given $s\ge 4$, there exists a strongly connected digraph $D$ such that $\overset{\rightarrow}{\smash{t}rc}(D)=\overset{\rightarrow}{s\smash{t}rc}(D)\ge s$ and $\overset{\rightarrow}{rvc}(D)=\overset{\rightarrow}{srvc}(D)=3$.
\end{enumerate}
\end{thm}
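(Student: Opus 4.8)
The plan is to treat the two parts separately, in each case realising the gap through a single well-understood auxiliary parameter rather than a direct total-colouring argument.

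For (a), I would take $D=\lrG_s$, the biorientation of the Krivelevich--Yuster graph $G_s$ (the $s$ vertex-disjoint triangles whose designated vertices are joined into a $K_s$) described above. The lower bound on the total parameters is then immediate: by (\ref{maxeq1}) and (\ref{LLLSrmk}), $\overset{\rightarrow}{s\smash{t}rc}(D)\ge\overset{\rightarrow}{\smash{t}rc}(D)\ge\overset{\rightarrow}{rvc}(D)=rvc(G_s)=s$, the last equality holding because the $s$ designated (cut) vertices must receive distinct colours on the forced pendant-to-pendant geodesics. The crux is to prove $\overset{\rightarrow}{rc}(D)=\overset{\rightarrow}{src}(D)=3$. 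Since $\textup{diam}(D)=3$, the chain $\textup{diam}(D)\le\overset{\rightarrow}{rc}(D)\le\overset{\rightarrow}{src}(D)$ gives the lower bound $3$. For the upper bound I would exhibit one $3$-arc-colouring that is strongly rainbow connected, exploiting that in a biorientation the two arcs of an edge may be coloured independently: colour every pendant-to-hub arc with colour $1$, every hub clique arc $u_iu_j$ with colour $2$, and every hub-to-pendant arc with colour $3$ (triangle-internal arcs arbitrarily, as they only carry length-$1$ geodesics). A short case check shows every geodesic is rainbow (e.g.\ the length-$3$ geodesic $v_iu_iu_jv_j$ gets colours $1,2,3$), so $\overset{\rightarrow}{src}(D)\le 3$ and hence $\overset{\rightarrow}{rc}(D)=\overset{\rightarrow}{src}(D)=3$. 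This is precisely the mechanism that makes (a) work: directing splits each edge into two independently coloured arcs, collapsing the arc-side (strong) rainbow connection to $3$, while the vertex-side forcing survives intact inside $\overset{\rightarrow}{rvc}$.

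For (b) this idea fails, because biorientations always have small arc-rainbow connection (for instance $\overset{\rightarrow}{rc}(\lrK_{1,n})=2$), so one cannot inflate the total parameters through the arc side of a biorientation. Instead I would drive the total parameters from the arc-rainbow side using (\ref{maxeq1}) and (\ref{maxeq2}): it suffices to produce an oriented digraph family $D$ with $\overset{\rightarrow}{rvc}(D)=\overset{\rightarrow}{srvc}(D)=3$ but $\overset{\rightarrow}{rc}(D),\overset{\rightarrow}{src}(D)\ge s$, for then $\overset{\rightarrow}{\smash{t}rc}(D)\ge\overset{\rightarrow}{rc}(D)\ge s$ and $\overset{\rightarrow}{s\smash{t}rc}(D)\ge\overset{\rightarrow}{src}(D)\ge s$. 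Digraph families separating $\overset{\rightarrow}{rc},\overset{\rightarrow}{src}$ (unbounded) from $\overset{\rightarrow}{rvc},\overset{\rightarrow}{srvc}$ (bounded) are exactly the digraph analogues of the Krivelevich--Yuster examples established in \cite{LLLS2017}; I would take such a family, adjusted (by appending a bounded gadget if needed) so that its rainbow vertex-connection numbers are exactly $3$ and its diameter is controlled. It then remains to evaluate the total parameters exactly and to prove the asserted equality $\overset{\rightarrow}{\smash{t}rc}(D)=\overset{\rightarrow}{s\smash{t}rc}(D)$, which I would do by constructing a single strongly total rainbow connected total-colouring whose palette size matches the lower bound just obtained, invoking Theorem~\ref{pro2}(c) where the two total parameters coincide.

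The main obstacle lies entirely in (b). The forcing that makes $\overset{\rightarrow}{rc}$ large is an arc-path phenomenon, and the construction must guarantee that it does not simultaneously force many internal vertex-colours, which would push $\overset{\rightarrow}{rvc}$ above $3$; concretely one needs all the relevant rainbow paths to stay short and to be routed through a bounded set of internal vertices, so the vertex side remains $3$-colourable while the arc side genuinely requires $s$ colours. Reconciling these opposing demands in a single total-colouring — rigid on arcs, economical on vertices — so that both total parameters attain the \emph{same} value $\ge s$, is the step I expect to require the most care.
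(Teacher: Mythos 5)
Part (a) of your proposal is correct and is essentially the paper's argument: the paper takes $D=\lrF_s$, where $F_s$ is $K_s$ with a pendant edge at each vertex, forces the hub colours directly (every $v_i-v_j$ path has both $u_i$ and $u_j$ as internal vertices), and exhibits the same style of $3$-arc-colouring to get $\overset{\rightarrow}{rc}(D)=\overset{\rightarrow}{src}(D)=3$. Your variant with $\lrG_s$ and the detour through $\overset{\rightarrow}{rvc}$ and (\ref{LLLSrmk}) works for exactly the same reasons.

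Part (b) has a genuine gap, located precisely where you anticipated difficulty: the equality $\overset{\rightarrow}{\smash{t}rc}(D)=\overset{\rightarrow}{s\smash{t}rc}(D)$. Neither of your two proposed mechanisms can deliver it. Theorem \ref{pro2}(c) identifies the two total parameters only when their common value is $3$ or $4$, so it is silent once the values are at least $5$; since you need values $\ge s$ with $s$ arbitrary, this route is closed. Your other mechanism --- exhibiting a strongly total rainbow connected colouring whose palette size matches the lower bound $s$ inherited from $\overset{\rightarrow}{rc}(D)\ge s$ --- fails because that lower bound is not tight for the family you point to. The digraph behind Lemma 6 of \cite{LLLS2017} (and used by the paper) is $t=\binom{s-1}{3}+1$ copies of $\rC_3$ glued at a common vertex $v$, with arcs $vx_i,x_iy_i,y_iv$. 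In any total rainbow connected colouring with $k$ colours, the unique $x_i-v$ path forces the $3$-set $O_i=\{c(x_iy_i),c(y_i),c(y_iv)\}$ to have three distinct colours, none equal to $c(v)$; and the unique $x_i-y_j$ path $x_iy_ivx_jy_j$ ($i\ne j$) forces $O_i$ to be disjoint from $I_j=\{c(vx_j),c(x_j),c(x_jy_j)\}$. Since $O_j$ and $I_j$ always share the colour $c(x_jy_j)$, no two sets $O_i,O_j$ can coincide, so $\binom{k-1}{3}\ge\binom{s-1}{3}+1$, giving $\overset{\rightarrow}{\smash{t}rc}(D)\ge s+1>s$. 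Hence no colouring with $s$ colours exists, and "evaluating the total parameters exactly" would be both difficult and beside the point.

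The missing idea is much simpler, and it is what the paper uses: in this family there is a \emph{unique} directed path between every ordered pair of vertices, so every path is automatically a geodesic, and therefore every total rainbow connected colouring is strongly total rainbow connected. This gives $\overset{\rightarrow}{\smash{t}rc}(D)=\overset{\rightarrow}{s\smash{t}rc}(D)$ for free, without knowing the common value. Combined with $\overset{\rightarrow}{\smash{t}rc}(D)\ge\overset{\rightarrow}{rc}(D)\ge s$ from (\ref{maxeq1}) and Lemma 6 of \cite{LLLS2017}, and $\overset{\rightarrow}{rvc}(D)=\overset{\rightarrow}{srvc}(D)=3$ from the same lemma, this completes (b).
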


\begin{proof}
(a) Let $F_s$ be the simple graph consisting of $K_s$ with a pendent edge attached to each of the $s$ vertices of $K_s$, and $D_s=\lrF_s$. Let $u_1,\dots,u_s$ be the vertices of the $K_s$, and $v_1,\dots,v_s$ be the pendent vertices, where $u_iv_i\in E(F_s)$ for $1\le i\le s$. Then in any total rainbow connected colouring of $D_s$, the vertices $u_i$ must receive distinct colours, and thus by (\ref{eq}), $\overset{\rightarrow}{s\smash{t}rc}(D)\ge \overset{\rightarrow}{\smash{t}rc}(D)\ge s$. Also, the arc-colouring $c$ of $D_s$ where $c(u_iu_j)=1$ for all $1\le i\neq  j\le s$; $c(u_iv_i)=2$ and $c(v_iu_i)=3$, for $1\le i\le s$, is strongly rainbow connected, and thus $\overset{\rightarrow}{src}(D)\le 3$. Since $\textup{diam}(D_s)=3$, we have $\overset{\rightarrow}{rc}(D)\ge 3$, and thus $\overset{\rightarrow}{rc}(D)=\overset{\rightarrow}{src}(D)=3$.\\[1ex]
\indent(b) Let $D$ be the digraph consisting of $t={s-1 \choose 3}+1$ copies of the triangle $\rC_3$, all having one vertex in common. Let $V(D)=\{v,x_1,y_1,\dots,x_t,y_t\}$, and let the arcs of $A(D)$ be $vx_i,x_iy_i,y_iv$, for $1\le i\le t$. It was proved in Lemma 6 of \cite{LLLS2017} that $\overset{\rightarrow}{rc}(D)\ge s$, and $\overset{\rightarrow}{rvc}(D)=\overset{\rightarrow}{srvc}(D)=3$. Now, note that for any two vertices $x,y\in V(D)$, there is a unique $x-y$ path, and thus $\overset{\rightarrow}{\smash{t}rc}(D)=\overset{\rightarrow}{s\smash{t}rc}(D)$. The assertion then follows from $\overset{\rightarrow}{\smash{t}rc}(D)\ge \overset{\rightarrow}{rc}(D)\ge s$.
\end{proof}

Finally, we consider how far apart the quantities $\overset{\rightarrow}{\smash{t}rc}(D)$ and $\max(\overset{\rightarrow}{rc}(D),\overset{\rightarrow}{rvc}(D))$ in (\ref{maxeq1}) can be, and similarly for $\overset{\rightarrow}{s\smash{t}rc}(D)$ and $\max(\overset{\rightarrow}{src}(D),\overset{\rightarrow}{srvc}(D))$ in (\ref{maxeq2}). For example, by considering the bioriented path $\lrP_n$, we have $\overset{\rightarrow}{\smash{t}rc}(\lrP_n)=\overset{\rightarrow}{s\smash{t}rc}(\lrP_n)=2n-3$ and $\max(\overset{\rightarrow}{rc}(\lrP_n),\overset{\rightarrow}{rvc}(\lrP_n))=\max(\overset{\rightarrow}{src}(\lrP_n),\overset{\rightarrow}{srvc}(\lrP_n))=n-1$, so that
\[
\overset{\rightarrow}{\smash{t}rc}(\lrP_n)-\max(\overset{\rightarrow}{rc}(\lrP_n),\overset{\rightarrow}{rvc}(\lrP_n))=\overset{\rightarrow}{s\smash{t}rc}(\lrP_n)-\max(\overset{\rightarrow}{src}(\lrP_n),\overset{\rightarrow}{srvc}(\lrP_n))=n-2,
\]
and each difference can be arbitrarily large. However, in this example, all four quantities in consideration are unbounded in $n$. Thus, we propose the following problem.
%
\begin{prob}\label{sepprob}
Does there exist an infinite family of digraphs $\mathcal D$ such that $\overset{\rightarrow}{\smash{t}rc}(D)$ is unbounded on $\mathcal D$, while $\max(\overset{\rightarrow}{rc}(D),\overset{\rightarrow}{rvc}(D))$ is bounded? Similarly, does there exist an infinite family of digraphs $\mathcal D$ such that $\overset{\rightarrow}{s\smash{t}rc}(D)$ is unbounded on $\mathcal D$, while $\max(\overset{\rightarrow}{src}(D),\overset{\rightarrow}{srvc}(D))$ is bounded?
\end{prob}


\section{Total rainbow connection of some specific digraphs}\label{specsect}

In this section, we determine the (strong) total rainbow connection number of some specific digraphs. For $n\ge 3$, the \emph{wheel} $W_n$ is the graph obtained by taking the cycle $C_n$, and joining a new vertex $v$ to every vertex of $C_n$. The vertex $v$ is the \emph{centre} of $W_n$. For $t \ge 2$, let $K_{n_1,\dots,n_t}$ denote the complete $t$-partite graph with class-sizes $n_1,\dots,n_t$. The following theorem determines the two parameters for the biorientations of paths, cycles, wheels, and complete multipartite graphs.

\begin{thm}\label{thm2}
\indent\\[-3ex]
\begin{enumerate}
\item[(a)] For $n\geq 2$, $\overset{\rightarrow}{\smash{t}rc}(\lrP_n)=\overset{\rightarrow}{s\smash{t}rc}(\lrP_n)=2n-3$.
\item[(b)] For $n\geq 3$, $\overset{\rightarrow}{\smash{t}rc}(\lrC_n)=\overset{\rightarrow}{s\smash{t}rc}(\lrC_n)=g(n)$, where
\[
g(n)=\left\{
\begin{array}{ll}
n-2 & \textup{\emph{if} $n=3,5$\emph{,}}\\
n-1 & \textup{\emph{if} $n=4,6,7,8,9,10,12$\emph{,}}\\
n & \textup{\emph{if} $n=11$ \emph{or} $n\ge 13$.}
\end{array}
\right.
\]
\item[(c)] For $n\geq4$, $\overset{\rightarrow}{\smash{t}rc}(\lrWn)=\overset{\rightarrow}{s\smash{t}rc}(\lrWn)=3$.
\item[(d)] Let $t\geq2$, and let $\lrK_{n_{1},n_{2},\ldots,n_{t}}$ be a complete $t$-partite digraph with $n_{i}\geq2$ for some $i$. Then, $\overset{\rightarrow}{\smash{t}rc}(\lrK_{n_{1},n_{2},\ldots,n_{t}})=
\overset{\rightarrow}{s\smash{t}rc}(\lrK_{n_{1},n_{2},\ldots,n_{t}})=3$.
\end{enumerate}
\end{thm}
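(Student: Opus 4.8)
The plan is to prove for each part a lower and an upper bound that meet, exploiting $\overset{\rightarrow}{\smash{t}rc}(D)\le\overset{\rightarrow}{s\smash{t}rc}(D)$ from (\ref{eq}): it then suffices to bound $\overset{\rightarrow}{s\smash{t}rc}$ from above and $\overset{\rightarrow}{\smash{t}rc}$ from below by the same quantity. For the lower bounds I would invoke the general results already proved. Part (a) is immediate from Proposition~\ref{prop1}, since $\textup{diam}(\lrP_n)=n-1$ gives $\overset{\rightarrow}{\smash{t}rc}(\lrP_n)\ge 2(n-1)-1=2n-3$. For (c) and (d) the relevant underlying graph is not complete, so the biorientation is not a bioriented complete graph, and Theorem~\ref{pro2}(b) yields the lower bound $3$ directly.

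For the upper bounds of (a), (c), and (d) I would write down explicit strongly total rainbow connected total-colourings. In (a), give the two symmetric arcs between $v_{i-1}$ and $v_i$ a common colour $a_i$ for $1\le i\le n-1$, and the internal vertices $v_1,\dots,v_{n-2}$ pairwise distinct further colours (the two ends coloured arbitrarily), all $2n-3$ colours being distinct; since $\lrP_n$ has a unique directed path between any ordered pair and a direction reversal leaves the multiset of colours on it unchanged, every geodesic is total-rainbow. In (c), writing $c$ for the centre and $u_0,\dots,u_{n-1}$ for the rim, colour $c$ with $3$, every in-arc $u_ic$ with $1$, every out-arc $cu_i$ with $2$, and all remaining elements arbitrarily; adjacent pairs are joined by a single arc, and a non-adjacent rim pair $u_i,u_j$ by the geodesic $u_icu_j$ with colour sequence $1,3,2$. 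In (d), since some $n_i\ge 2$ the digraph has diameter $2$; fix a linear ordering of the vertices whose minimum and maximum lie in two distinct parts, colour every vertex $3$, and colour an arc $x\to y$ with $1$ if $x$ precedes $y$ and with $2$ otherwise. Vertices in different parts are adjacent, and for two vertices in a common part at least one of the global extreme vertices lies outside that part and is the internal vertex of a total-rainbow geodesic of length $2$, with colour sequence $1,3,2$ or $2,3,1$.

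Part (b) is the substantial case and I expect it to be the main obstacle. Here $\textup{diam}(\lrC_n)=\lfloor n/2\rfloor$, and a geodesic between two vertices runs along the shorter arc, exactly as in the undirected cycle $C_n$; thus by Proposition~\ref{pro3} and (\ref{prop3eq}) a strongly total rainbow connected colouring of $C_n$ lifts to one of $\lrC_n$, and the upper bound $\overset{\rightarrow}{s\smash{t}rc}(\lrC_n)\le g(n)$ reduces to constructing colourings of $C_n$ that make every sub-path of length at most $\lfloor n/2\rfloor$ total-rainbow. I would produce these by a periodic colour pattern around the cycle for the generic large $n$, and check the finitely many small values $n\le 12$ individually, matching the three regimes of $g(n)$.

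The difficulty lies in the lower bound $\overset{\rightarrow}{\smash{t}rc}(\lrC_n)\ge g(n)$, which for large $n$ must improve on the diameter bound of Proposition~\ref{prop1} ($n-1$ for even $n$, $n-2$ for odd $n$) by $1$ and $2$ respectively. The preliminary observation is that, once $n$ is large, the longer of the two arcs joining a pair of vertices is too long to carry a total-rainbow path with so few colours; hence every pair at distance less than $\lfloor n/2\rfloor$ is forced onto its unique shorter path, and for odd $n$ this applies to the diametral pairs as well, whereas for even $n$ a diametral pair retains a choice between its two half-cycles. Each forced shorter path shows that the corresponding block of consecutive clockwise (or counterclockwise) arcs, together with its interior vertices, is rainbow. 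I would then run a sliding-window count around the entire cycle to show that $\lfloor n/2\rfloor$ arc-colours cannot be recycled in step with the shared interior-vertex colours, which forces one extra colour when $n$ is even and two when $n$ is odd. The residual freedom enjoyed by the diametral pairs in the even case is exactly what lowers the even bound by one relative to the odd bound, and the exceptional value at $n=11$ together with the clean threshold at $n\ge 13$ falls out of this count once the small cases $n\le 12$ are settled directly. I expect this parity-sensitive counting, rather than the constructions, to be the crux.
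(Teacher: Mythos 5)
Your proposal is correct, and its overall architecture --- diameter-type lower bounds from (\ref{eq}) and Theorem \ref{pro2}(b), met by explicit strongly total rainbow connected colourings --- is the same as the paper's. For (a), (c), (d) your colourings are in substance identical to the paper's (for (a) the paper instead cites $strc(P_n)=2n-3$ and lifts it through (\ref{prop3eq}), but the colouring behind that citation is the one you wrote; in (d) the paper orders the partition classes rather than the vertices, same idea). Part (b) is where the routes genuinely differ. For the upper bound the paper constructs nothing: it quotes $trc(C_n)=g(n)$ and $strc(C_n)=trc(C_n)$ from \cite{LMS2014} and \cite{CLLL2016} and applies (\ref{prop3eq}), whereas you propose to rebuild those undirected colourings; your shifted periodic pattern does handle $n\ge 13$, but each of the small values $n\le 12$ then needs its own construction, which is exactly the work the citation avoids. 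For the lower bound at $n=11$ and $n\ge 13$, the paper argues by pigeonhole: with fewer than $n$ colours on the $2n$ elements formed by the $n$ clockwise arcs and the $n$ vertices, some colour occurs three times, so two equal-coloured elements lie within clockwise distance roughly $n/3$ of each other; the clockwise path through both is then not total-rainbow, and the opposite path has more than $n-1$ arcs and internal vertices, so it cannot be total-rainbow either. Your forced-window count is the same mechanism packaged contrapositively, and it does close: two equal-coloured clockwise elements must be nearly antipodal (each must avoid every forced window containing the other), and three pairwise nearly antipodal elements cannot coexist on the cycle when $n=11$ or $n\ge 13$, so each colour meets the $2n$ clockwise elements at most twice, forcing at least $n$ colours. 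What your version buys is transparency of the thresholds (three pairwise nearly antipodal elements do exist when $n=12$, which is exactly why $g(12)=11$ escapes); what the paper's version buys is brevity, one pigeonhole application replacing the window bookkeeping. Two cautions: your ``sliding-window count'' is asserted rather than executed --- the near-antipodality computation, with forced window length $\lfloor n/2\rfloor$ for odd $n$ but only $n/2-1$ for even $n$, is precisely the step that must be written out --- and the cases $n=7,9$, where $g(n)=n-1$ exceeds the diameter bound by one, are covered by neither the diameter bound nor the count, so they require the same ad hoc verification that the paper performs.
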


\begin{proof}
(a) In \cite{CLLL2016}, Proposition 6, it was shown that $strc(P_{n})=2n-3$ for $n\ge 2$. Since $\textup{diam}(\lrP_n)=n-1$, it follows from (\ref{eq}) and (\ref{prop3eq}) that
\[
2n-3\leq \overset{\rightarrow}{\smash{t}rc}(\lrP_n)\leq \overset{\rightarrow}{s\smash{t}rc}(\lrP_n)\leq
strc(P_{n})=2n-3,
\]
and thus part (a) holds.\\[1ex]
\indent (b) For $n\ge 3$, it was shown in \cite{LMS2014}, Theorem 2, that $trc(C_n)=g(n)$; and in \cite{CLLL2016}, Theorem 8, that  $strc(C_n)=trc(C_n)$. Therefore, $\overset{\rightarrow}{\smash{t}rc}(\lrC_n)\le\overset{\rightarrow}{s\smash{t}rc}(\lrC_n)\le strc(C_n)=g(n)$. It suffices to prove that $\overset{\rightarrow}{\smash{t}rc}(\lrC_n)\ge g(n)$. We have
$\overset{\rightarrow}{\smash{t}rc}(\lrC_n)\geq 2\,\textup{diam}(\lrC_n)-1=2\lfloor\frac{n}{2}\rfloor-1$, and this gives $\overset{\rightarrow}{\smash{t}rc}(\lrC_n)\ge g(n)$ for $n=3,4,5,6,8,10,12$. To see that $\overset{\rightarrow}{\smash{t}rc}(\lrC_7)\geq6$ (resp.~$\overset{\rightarrow}{\smash{t}rc}(\lrC_9)\geq 8)$, one can check that in any total-colouring of $\lrC_7$ (resp.~$\lrC_9$) with at most $5$ (resp.~$7$) colours, there exist vertices $u$ and $v$ with $d(u,v)=3$ (resp.~$d(u,v)=4$) such that the $u-v$ path with length $3$ (resp.~$4$) is not total-rainbow. Since the other $u-v$ path has length $4$ (resp.~$5$), it is also not total-rainbow. Hence, there does not exist a total-rainbow $u-v$ path, and the results for $n=7,9$ follow. Finally, let $n=11$ or $n\geq13$. Let $C_n=v_0v_1\cdots v_{n-1}v_0$. Suppose that we have a total-colouring of $\lrC_n$ with fewer than $n$ colours. Let $A=\{v_iv_{i+1}:0\le i\le n-1\}$. Then for some $m\in\{0,1,2,3\}$, we have $m$ arcs of $A$ and $3-m$ vertices with the same colour. Without loss of generality, for some $1<i\le\lfloor\frac{n-m}{3}\rfloor+1$, either $v_1$ and $v_i$, or $v_1$ and $v_iv_{i+1}$, or $v_0v_1$ and $v_iv_{i+1}$, have the same colour. Consider the two $v_0-v_{i+1}$ paths in $\lrC_n$. The path $v_0v_1\cdots v_{i+1}$ is not total-rainbow. The path $v_0v_{n-1}v_{n-2}\cdots v_{i+1}$ has at least $2n-2\lfloor\frac{n-m}{3}\rfloor-5>n-1$ arcs and internal vertices, and hence is also not total-rainbow. Therefore $\overset{\rightarrow}{\smash{t}rc}(\lrC_n)\geq n$.\\[1ex]
\indent (c) Let $v$ be the centre of $\lrWn$. Note that any total-colouring of $\lrWn$ where every in-arc (resp.~out-arc) of $v$ has colour $1$ (resp.~$2$), and $v$ has colour $3$, is strongly total rainbow connected. Since $n\ge 4$, we have $\textup{diam}(\lrWn)=2$, and $3\le \overset{\rightarrow}{\smash{t}rc}(\lrWn)\le \overset{\rightarrow}{s\smash{t}rc}(\lrWn)\le 3$ by (\ref{eq}). Thus the result holds.\\[1ex]
\indent (d) Let $V_{1},V_{2},\ldots,V_{t}$ be the partition classes of $\lrK_{n_{1},n_{2},\ldots,n_{t}}$. For each arc $uv$ with $u \in V_{i}$ and $v\in V_{j}$, assign colour $1$ to $uv$ if $i<j$ and colour $2$ if $i>j$, and  assign colour $3$ to each vertex of $\lrK_{n_{1},n_{2},\ldots,n_{k}}$. Then it is easy to check that this is a strongly total rainbow connected colouring. Also, since $n_{i}\geq2$ for some $i$, we have $\textup{diam}(\lrK_{n_{1},n_{2},\ldots,n_{t}})=2$. Hence by (\ref{eq}), we have $3\le\overset{\rightarrow}{\smash{t}rc}(\lrK_{n_{1},n_{2},\ldots,n_{k}})\le\overset{\rightarrow}{s\smash{t}rc}(\lrK_{n_{1},n_{2},\ldots,n_{k}})\le 3$, and the result follows.
\end{proof}

In the next result, we determine the (strong) total rainbow connection numbers for directed cycles.

\begin{thm}\label{thm3}
Let $n\geq3$. Then,
\[
\overset{\rightarrow}{\smash{t}rc}(\rC_n)=\overset{\rightarrow}{s\smash{t}rc}(\rC_n)=
\left\{
\begin{array}{ll}
3 & \textup{\emph{if} $n=3$\emph{,}}\\
6 & \textup{\emph{if} $n=4$\emph{,}}\\
2n & \textup{\emph{if} $n\ge 5$.}
\end{array}
\right.
\]
\end{thm}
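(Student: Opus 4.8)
The plan is to exploit the fact that in the directed cycle $\rC_n$, for every ordered pair $(u,v)$ there is exactly one directed $u-v$ path, which is therefore automatically a geodesic. Consequently $\overset{\rightarrow}{\smash{t}rc}(\rC_n)=\overset{\rightarrow}{s\smash{t}rc}(\rC_n)$, and a total-colouring is (strongly) total rainbow connected precisely when each of these unique paths is total-rainbow. I would write $V(\rC_n)=\{v_0,\dots,v_{n-1}\}$ and $a_i=v_iv_{i+1}$ for the arcs (indices mod $n$). The key objects are the \emph{long paths}: for each $k$, the unique $v_{k+1}-v_k$ path $v_{k+1}v_{k+2}\cdots v_k$ has length $n-1$, uses every arc except $a_k$, and has as internal vertices every vertex except $v_k$ and $v_{k+1}$.

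For the lower bound when $n\ge5$, I would argue that all $2n$ elements (the $n$ arcs and $n$ vertices) must receive pairwise distinct colours, which, combined with the trivial upper bound $\overset{\rightarrow}{s\smash{t}rc}(\rC_n)\le n+m=2n$ from (\ref{eq}), yields equality. Two elements must get different colours whenever they occur together on some long path. For two distinct arcs $a_i,a_j$, any long path omitting $a_k$ with $k\notin\{i,j\}$ contains both, and such $k$ exists since $n\ge3$. For an arc $a_i$ and a vertex $v_j$, the long path for index $k$ contains $a_i$ and has $v_j$ internal provided $k\notin\{i,j,j-1\}$, so a valid $k$ exists once $n\ge4$; hence the arc-colours and vertex-colours are disjoint. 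For two distinct vertices $v_i,v_j$, a long path has both internal iff its omitted consecutive pair $\{v_k,v_{k+1}\}$ avoids $\{v_i,v_j\}$; at most four consecutive pairs meet $\{v_i,v_j\}$, so for $n\ge5$ an admissible pair exists and $v_i,v_j$ are forced apart. This gives $\overset{\rightarrow}{\smash{t}rc}(\rC_n)\ge 2n$.

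The small cases are handled separately. For $n=3$ the diameter is $2$, so (\ref{eq}) gives $\overset{\rightarrow}{\smash{t}rc}(\rC_3)\ge3$; for the matching upper bound one exhibits an explicit $3$-colouring (for instance $c(a_i)=i+1$ together with a suitable cyclic choice of vertex colours) and checks the three length-$2$ paths. For $n=4$ the diameter bound only gives $5$, so the argument above must be refined: the four arcs are still pairwise distinct and still disjoint in colour from the vertices (both needing only $n\ge4$), but now no long path has two \emph{opposite} vertices internal, since its two internal vertices always form a consecutive pair; thus $v_0,v_2$ (and $v_1,v_3$) may share colours. The vertex constraints then form a $4$-cycle $v_0v_1v_2v_3$, which is $2$-colourable, giving the sharp lower bound $4+2=6$. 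A matching $6$-colouring assigns the arcs the distinct colours $1,2,3,4$ and sets $c(v_0)=c(v_2)=5$, $c(v_1)=c(v_3)=6$, which one verifies on all length-$2$ and length-$3$ paths. For $n\ge5$, colouring all $2n$ elements distinctly is trivially total rainbow connected, giving the upper bound $2n$.

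I expect the main obstacle to be the $n=4$ case, where the elementary diameter bound of (\ref{eq}) is not tight. The crux is recognising that the obstruction count for vertices drops exactly at $n=4$: opposite vertices never co-occur as internal vertices, so the vertex palette can be compressed to two colours while the arc palette remains forced to four colours that must moreover avoid the vertex colours. Pinning down that this compression is the \emph{only} available saving, and confirming that it is realisable, is the delicate point; the remaining verifications (the finite path-checks for $n=3,4$ and the uniform $n\ge5$ argument) are routine.
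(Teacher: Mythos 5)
Your proof is correct, and it takes a genuinely more self-contained route than the paper's. For $n\ge 5$ the paper also reduces the problem to showing that all $2n$ elements of $V(\rC_n)\cup A(\rC_n)$ receive distinct colours, but it disposes of the arc--arc and vertex--vertex cases by citing the known values $\overset{\rightarrow}{rc}(\rC_n)=n$ and $\overset{\rightarrow}{rvc}(\rC_n)=n$ from earlier work, so that only the mixed vertex--arc case needs an argument; that case is then settled with short paths (if $v_1$ and $v_iv_{i+1}$ share a colour, then either the unique $v_0-v_{i+1}$ path or the unique $v_i-v_2$ path fails to be total-rainbow). You instead prove all three separation statements directly with your uniform ``long path'' count, which buys independence from the prior literature at the cost of a slightly longer case analysis; the counts ($k$ avoiding at most $2$, $3$, or $4$ forbidden indices, hence valid for $n\ge 3$, $n\ge 4$, $n\ge 5$ respectively) are all accurate. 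Your treatment of the small cases is also more explicit than the paper's, which merely asserts that $n=3,4$ can be easily verified: in particular your analysis of $n=4$ --- the four arcs forming a clique in the constraint graph, arc and vertex colours forced disjoint, and the vertex constraints forming the $2$-chromatic $4$-cycle, so that the constraint graph is the join of $K_4$ and $C_4$ with chromatic number exactly $4+2=6$ --- is a genuine proof of the value $6$, which is precisely the one case where neither the diameter bound of (\ref{eq}) nor the $n\ge 5$ argument applies. Both approaches rest on the uniqueness of directed paths in $\rC_n$, which is what forces $\overset{\rightarrow}{\smash{t}rc}(\rC_n)=\overset{\rightarrow}{s\smash{t}rc}(\rC_n)$ and lets the upper bound $2n$ come for free from (\ref{eq}).
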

\begin{proof}
Let $\rC_n=v_{0}v_1\cdots v_{n-1}v_0$. One can easily verify that $\overset{\rightarrow}{\smash{t}rc}(\rC_3)
=\overset{\rightarrow}{s\smash{t}rc}(\rC_3)=3$, and $\overset{\rightarrow}{\smash{t}rc}(\rC_4)=\overset{\rightarrow}{s\smash{t}rc}(\rC_4)=6$. Now let $n\ge 5$. By (\ref{eq}), we have $\overset{\rightarrow}{\smash{t}rc}(\rC_n)
\leq \overset{\rightarrow}{s\smash{t}rc}(\rC_n)\leq2n$, and hence it suffices to prove that $\overset{\rightarrow}{\smash{t}rc}(\rC_n)\ge 2n$. Suppose that we have a total rainbow connected colouring for $\rC_n$, using fewer than $2n$ colours. Then, there exist two elements of $V(\rC_n)\cup A(\rC_n)$ with the same colour. We recall that for $n\ge 5$, we have $\overset{\rightarrow}{rc}(\rC_n)=n$ from \cite{AM2016}, Theorem 4; and $\overset{\rightarrow}{rvc}(\rC_n)=n$ from \cite{LLLS2017}, Proposition 8. It follows that we must have a vertex and an arc with the same colour. We may assume that $v_1$ and $v_iv_{i+1}$ have the same colour, for some $0\le i\le n-1$. If $i\not\in\{0,n-1\}$, then the unique $v_0-v_{i+1}$ path is not total-rainbow. Otherwise, the unique $v_i-v_2$ path is not total-rainbow. We have a contradiction, and therefore, $\overset{\rightarrow}{\smash{t}rc}(\rC_n)\ge 2n$.
\end{proof}

\section{Tournaments}\label{tournamentssect}

We now study the (strong) total rainbow connection number of tournaments. Our first aim is to consider the range of values that the (strong) total rainbow connection number can take, over all strongly connected tournaments of a given order $n\ge 3$. Clearly, ${\rC}_3$ is the only such tournament of order $3$. For $n=4$, there is also a unique tournament, namely $T_4$, which is the union of ${\rC}_4=v_0v_1v_2v_3v_0$ and the arcs $v_0v_2,v_1v_3$. 

For the rainbow connection analogue, we have $\overset{\rightarrow}{rc}({\rC}_3)=\overset{\rightarrow}{src}({\rC}_3)=\overset{\rightarrow}{rc}(T_4)=\overset{\rightarrow}{src}(T_4)=3$. Dorbec, Schiermeyer, Sidorowicz, and Sopena \cite{PIE2014}; and Sidorowicz and Sopena \cite{SS2016}, proved the following results.

\begin{thm}\label{Dorbecetaltourthm1}\textup{\cite{PIE2014,SS2016}}
If $T$ is a strongly connected tournament with $n\ge 4$ vertices, then $2\le\overset{\rightarrow}{rc}(T)\le\overset{\rightarrow}{src}(T)\le n-1$.
\end{thm}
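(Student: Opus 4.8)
The plan is to treat the three parts of the chain separately. The middle inequality $\overset{\rightarrow}{rc}(T)\le\overset{\rightarrow}{src}(T)$ is just the general fact $\textup{diam}(D)\le\overset{\rightarrow}{rc}(D)\le\overset{\rightarrow}{src}(D)$ recorded in the introduction, so nothing new is needed there. For the lower bound $\overset{\rightarrow}{rc}(T)\ge 2$ I would argue via the diameter: since $T$ is a tournament it is an oriented graph, so for any arc $uv$ the reverse arc $vu$ is absent, whence $d(v,u)\ge 2$. Thus $\textup{diam}(T)\ge 2$, and $\overset{\rightarrow}{rc}(T)\ge\textup{diam}(T)\ge 2$; strong connectivity guarantees $d(v,u)$ is finite, and this uses only $n\ge 2$.

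The substance is the upper bound $\overset{\rightarrow}{src}(T)\le n-1$, which I would prove by induction on $n$. The base case $n=4$ is already recorded above: the unique strong tournament $T_4$ satisfies $\overset{\rightarrow}{src}(T_4)=3=n-1$. For the inductive step let $n\ge 5$ and assume the bound for all strong tournaments on $n-1$ vertices. First I would produce a vertex whose deletion preserves strong connectivity: by Moon's theorem a strong tournament is vertex-pancyclic, so $T$ contains a cycle $C$ of length $n-1$; the unique vertex $v\notin C$ then has $C$ as a Hamiltonian cycle of $T-v$, so $T-v$ is a strong tournament on $n-1\ge 4$ vertices. By the inductive hypothesis, $T-v$ admits a strongly rainbow connected arc-colouring with colours $1,\dots,n-2$.

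I would then reinsert $v$ and extend the colouring using a single fresh colour: colour every in-arc $uv$ of $v$ with the new colour $n-1$, and colour every out-arc $vu$ with an old colour, say colour $1$. Every length-two path $x\,v\,y$ then uses the two distinct colours $n-1$ and $1$, so it is rainbow. One would then check that every ordered pair $(x,y)$ still has a rainbow geodesic: a pair $x,y\neq v$ whose distance is unchanged inherits its geodesic from $T-v$; pairs with $x=v$ or $y=v$ are handled using the in-arc colour $n-1$ (which is unused inside $T-v$) together with the inherited geodesics; and the only genuinely new situation is a pair $x,y\in V(T-v)$ for which reinserting $v$ creates a strictly shorter $x$--$y$ path forced to pass through $v$.

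The main obstacle is exactly this last point: controlling how distances change when $v$ is reinserted, and guaranteeing that a single extra colour suffices to keep the new, $v$-using geodesics (and the geodesics having $v$ as an endpoint) rainbow. If every forced shortcut through $v$ had length $2$, the reuse rule above would finish the argument immediately; the difficulty is that a geodesic through $v$ could be longer, in which case its $T-v$-portion must simultaneously be rainbow and avoid the colour $1$ reused on the out-arc of $v$. Overcoming this cleanly will likely require choosing $v$ with extra domination structure (for instance via a median order of $T$), or replacing the induction by a direct colouring built from a Hamiltonian cycle $v_0v_1\cdots v_{n-1}v_0$ of $T$, in which the ``backward'' pairs are routed along carefully coloured arcs. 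I expect the bookkeeping for these geodesics to be the hardest part, and it is where I would concentrate the effort.
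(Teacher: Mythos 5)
Your lower bound ($\overset{\rightarrow}{rc}(T)\ge\textup{diam}(T)\ge 2$ because a tournament has no symmetric arcs) and the middle inequality are fine. The problem is the only substantive part, the upper bound $\overset{\rightarrow}{src}(T)\le n-1$, and it is exactly the gap you flag yourself: the inductive step does not close, because strong rainbow connectedness of $T-v$ is the wrong invariant to carry through the induction. Concretely, two things go wrong with your extension rule. First, for ordered pairs $(v,y)$: every $v$--$y$ geodesic starts with an out-arc of $v$, which you colour $1$, a colour already in use inside $T-v$; the inherited rainbow $u$--$y$ geodesic (for $u\in\Gamma^+(v)$) may itself contain colour $1$, and the inductive hypothesis gives you no way to avoid this (the symmetric pairs $(x,v)$ are fine, since colour $n-1$ is fresh). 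Second, for pairs $x,y\in V(T-v)$ with $d_T(x,y)<d_{T-v}(x,y)$: every $T$-geodesic from $x$ to $y$ has the form $x\cdots w\,v\,u\cdots y$, and while the two portions $x\cdots w$ and $u\cdots y$ are in fact geodesics of $T-v$ (they avoid $v$ and have lengths $d_T(x,w)$ and $d_T(u,y)$, forcing equality with the $T-v$ distances), the hypothesis only guarantees each pair \emph{some} rainbow geodesic separately; it gives no control over colour clashes \emph{between} the two spliced portions, nor between the $u\cdots y$ portion and the colour $1$ on the arc $vu$. So as written the argument is incomplete precisely at the point you identify, and no amount of local patching of the colour rule fixes it without strengthening the inductive statement.

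For comparison, the proof in \cite{PIE2014,SS2016} (whose colouring this paper reproduces in the proof of Theorem \ref{tournamentthm1}) is a direct, non-inductive construction that sidesteps all distance bookkeeping. Choose $a,b\in V(T)$ with $d(a,b)=\textup{diam}(T)$, fix an $a$--$b$ geodesic $aa_1\cdots a_{d-1}b$ (so that $ba\in A(T)$), and take a bijection $f:V(T)\to\{1,\dots,n\}$ with $f(b)=n$. Colour every arc $uv$ with $u\neq b$ by $f(u)$; since the arcs of a directed path have pairwise distinct tails and $f$ is injective, every path avoiding the out-arcs of $b$ is \emph{automatically} rainbow, and only the $n-1$ colours $f(V(T))\setminus\{n\}$ appear. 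The out-arcs of $b$ are then recoloured using the already-present colours $f(a_1)$ (for $ba$) and $f(a)$ (for the remaining arcs $bu$), and the extremality of $d(a,b)$ is what rules out conflicts on geodesics passing through $b$. This ``colour each arc by its tail'' scheme is the efficient way to get the bound; if you want to salvage your write-up, replacing the induction by this construction is the route to take.
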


\begin{thm}\label{Dorbecetaltourthm2}\textup{\cite{PIE2014,SS2016}}
For every $n$ and $k$ such that $3\le k\le n-1$, there exists a tournament $T_{n,k}$ on $n$ vertices such that $\overset{\rightarrow}{rc}(T_{n,k})=\overset{\rightarrow}{src}(T_{n,k})=k$.
\end{thm}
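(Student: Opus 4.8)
The plan is to prove this constructively. For each pair $(n,k)$ with $3\le k\le n-1$ I would build a strongly connected tournament $T_{n,k}$ with $\textup{diam}(T_{n,k})=k$, which by the inequality $\textup{diam}(D)\le\overset{\rightarrow}{rc}(D)\le\overset{\rightarrow}{src}(D)$ already forces $\overset{\rightarrow}{rc}(T_{n,k})\ge k$ and $\overset{\rightarrow}{src}(T_{n,k})\ge k$; then I would exhibit a strongly rainbow connected arc-colouring using exactly $k$ colours, giving $\overset{\rightarrow}{src}(T_{n,k})\le k$ and hence both parameters equal to $k$. Concretely, take a \emph{core} on vertices $v_0,\dots,v_k$ with the forward arcs $v_iv_{i+1}$ ($0\le i\le k-1$) and all backward chords $v_jv_i$ for $j\ge i+2$; in this core the only arc by which a vertex can increase its index is $v_iv_{i+1}$, so the shortest path from $v_0$ to $v_k$ is the unique path $v_0v_1\cdots v_k$, of length $k$. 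The remaining $n-k-1$ vertices form a set of \emph{pendants} $u_1,\dots,u_{n-k-1}$, oriented transitively among themselves, with $u_a\to v_0$, $u_a\to v_1$, and $v_i\to u_a$ for $2\le i\le k$.

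The first step is to verify that $T_{n,k}$ is strongly connected and that $\textup{diam}(T_{n,k})=k$. The decisive point is that $v_0$ has out-degree one (its only out-neighbour is $v_1$), and more generally each $v_i$ can increase its index only via $v_iv_{i+1}$, so $d(v_0,v_j)=j$ along a unique geodesic; no pendant creates a core shortcut, because each pendant is beaten by $v_2,\dots,v_k$ and dominates only $v_0,v_1$. One then checks routinely that every other ordered pair is at distance at most $k$: the only long geodesics are $v_0v_1\cdots v_j$ and $u_av_1\cdots v_j$ (length $j\le k$), while pairs among the $v_i$ with smaller target, pairs incident to a pendant, and pendant-to-earlier-pendant pairs all lie at distance $1$, $2$, or $3$, which is at most $k$ since $k\ge 3$.

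For the upper bound I would colour the arcs with $\{1,\dots,k\}$ by giving colour $i$ to the forward arc $v_{i-1}v_i$, colour $1$ to every arc $u_av_1$, and colour $3$ to every arc $v_2u_a$. The two length-$k$ geodesic families then read $1,2,\dots,k$ and are rainbow, and the length-$3$ geodesics $v_0v_1v_2u_a$ and $u_av_1v_2u_b$ (for $b<a$) read $1,2,3$. It remains to colour the backward chords, the arcs $u_av_0$ and $v_iu_a$ for $i\ge 3$, and the pendant--pendant arcs so that each remaining (distance-$2$) pair acquires a rainbow geodesic. Here there is ample slack: most such pairs admit two different length-$2$ geodesics (for instance $(v_{i+1},v_i)$ can be routed through either $v_{i-1}$ or $v_{i+2}$), and the few pairs with a unique length-$2$ geodesic impose only the condition that two incident colours differ, which $k\ge 3$ colours easily supply.

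The main obstacle is exactly this last verification: confirming that one global assignment of colours to the chords and pendant arcs simultaneously makes \emph{every} ordered pair strongly rainbow connected. The delicate feature is that the forward path is colour-tight, using all $k$ colours, so one must ensure that the constraints coming from the short geodesics never demand an incompatible colour on a forward arc nor collide with the colours $1,2,3$ reserved for the long and length-$3$ geodesics. Organising the distance-$2$ cases (core-to-earlier-core, core-to-pendant, pendant-to-pendant) into a short table and producing an alternative geodesic wherever a clash would otherwise occur is the technical heart of the argument. I would check the boundary cases first: $k=n-1$, where there are no pendants and $T_{n,k}$ is just the core (recovering the unique strong tournament $T_4$ when $n=4$), and $k=3$, where only three colours are available, since these are the tightest.
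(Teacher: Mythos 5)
Your proposal is not yet a proof: the construction, the strong connectivity check, and the lower bound $\overset{\rightarrow}{rc}(T_{n,k})\ge\overset{\rightarrow}{src}(T_{n,k})\ge\textup{diam}(T_{n,k})=k$ are all fine, but the upper bound is exactly the part you defer. You colour only the forward arcs, the arcs $u_av_1$, and the arcs $v_2u_a$, and then leave the backward chords, the arcs $u_av_0$, $v_iu_a$ for $i\ge 3$, and the pendant--pendant arcs unspecified, declaring the remaining verification ``the technical heart of the argument.'' An appeal to ``ample slack'' is not an argument here, because some of the constraints are forced by \emph{unique} geodesics: $(v_1,v_0)$ is connected only by $v_1v_2v_0$, so $c(v_2v_0)\neq 2$ is mandatory, and $(v_k,v_{k-1})$ is connected only by $v_kv_{k-2}v_{k-1}$, so $c(v_kv_{k-2})\neq k-1$ is mandatory; a careless uniform choice (say, all chords coloured $2$) genuinely fails. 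Note also that the paper itself does not prove this theorem --- it is quoted from \cite{PIE2014,SS2016} --- so there is no in-paper proof to measure against; the closest object is the tournament $T_{N,k}$ in the proof of Theorem \ref{tournamentthm2}, whose arc set $\{v_{i-1}v_i\}\cup\{v_iv_j:i-j\ge 2\}$ is precisely your core, so your construction is indeed the standard one.

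The gap is real but closes in a few lines, which is worth recording since it shows your approach is sound. Keep colour $i$ on $v_{i-1}v_i$, give \emph{every} backward chord $v_jv_i$ ($j\ge i+2$) the colour $k$, give every arc leaving a pendant ($u_av_0$, $u_av_1$, $u_au_c$) the colour $1$, and every arc entering a pendant ($v_iu_a$, $i\ge 2$) the colour $3$. Then: forward pairs $(v_i,v_j)$, $i<j$, use the forward path with colours $i+1,\dots,j$; pairs $(v_j,v_i)$ with $j\ge i+2$ are arcs; $(v_{i+1},v_i)$ with $i\ge 1$ uses $v_{i+1}v_{i-1}v_i$ with colours $k,i$, distinct since $i\le k-1$; $(v_1,v_0)$ uses $v_1v_2v_0$ with colours $2,k$, distinct since $k\ge 3$; and the pendant pairs read $2,3$ or $1,2,3$ or $1,2,\dots,j$ exactly as in your table, hence are rainbow. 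Every ordered pair thus has a rainbow geodesic, giving $\overset{\rightarrow}{src}(T_{n,k})\le k$ and both parameters equal to $k$. Had you committed to this (or any) explicit completion and run the short case check, the proof would be complete; as submitted, its decisive step is missing.
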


It was also remarked in \cite{PIE2014} that there does not exist a tournament on $4$ or $5$ vertices with rainbow connection number $2$, and that such a tournament exists if the order is $8$ (mod $12)$. In response, Alva-Samos and Montellano-Ballesteros \cite{AM2017} proved the following result.

\begin{thm}\label{ASMBtourthm}\textup{\cite{AM2017}}
For every $n\ge 6$, there exists a tournament $T_n$ on $n$ vertices such that $\overset{\rightarrow}{rc}(T_n)=\overset{\rightarrow}{src}(T_n)=2$.
\end{thm}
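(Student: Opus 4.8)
The plan is to reduce the statement to a purely combinatorial colouring condition and then to exhibit an explicit circulant construction. First I would record the following equivalence: since any rainbow path using only two colours has length at most $2$, a $2$-arc-colouring of a digraph $D$ is rainbow connected if and only if $\textup{diam}(D)=2$ and every ordered pair $(u,v)$ with $uv\notin A(D)$ is joined by a directed path $u\to w\to v$ whose two arcs receive different colours. By Theorem \ref{pro2}(c)(i) we moreover have $\overset{\rightarrow}{rc}(D)=2$ if and only if $\overset{\rightarrow}{src}(D)=2$, so it suffices, for each $n\ge 6$, to construct one tournament $T_n$ on $n$ vertices together with a red/blue arc-colouring in which every non-arc ordered pair admits such a bichromatic path of length $2$.

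For the construction I would use circulant tournaments. Take $V(T_n)=\mathbb{Z}_n$ and a jump set $S\subseteq\{1,\dots,n-1\}$ with $S\dcup(-S)=\{1,\dots,n-1\}$, declaring $i\to j$ whenever $j-i\in S$; then split $S=A\dcup B$ into the red jumps $A$ and the blue jumps $B$. A non-arc ordered pair is exactly a pair $(u,v)$ with $v-u\in -S$, and a bichromatic length-$2$ path $u\to w\to v$ exists precisely when $v-u$ can be written as the sum of one red and one blue jump. Hence the whole condition collapses to the single sumset requirement
\[
-S\ \subseteq\ A+B \pmod n;
\]
this already guarantees that every ordered pair is joined by a path of length at most $2$, so $T_n$ is strongly connected and $\textup{diam}(T_n)=2$. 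This reformulation also explains why the obvious choice fails: for the symmetric interval $S=\{1,\dots,\frac{n-1}{2}\}$ the reverse difference $-1$ has the unique representation $\frac{n-1}{2}+\frac{n-1}{2}$, so its only length-$2$ path repeats a jump and can never be bichromatic. I would therefore take $S$ suitably asymmetric—for instance keeping $1\in S$ but placing $2\in -S$, so that $1+(n-2)=n-1$ furnishes a second, bichromatic, representation of $-1$ once $\textup{col}(1)\ne\textup{col}(n-2)$.

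The remaining work is to pin down $S$ and the bipartition $A\dcup B$ so that $A+B$ covers every element of $-S$, and to deal with the parity of $n$: for even $n$ the antipode $n/2$ is self-paired, so I would orient those antipodal arcs by an auxiliary rule (e.g.\ $i\to i+n/2$ iff $i<n/2$) and re-run the sumset count. The crux—and the step I expect to be the main obstacle—is exactly this additive covering: guaranteeing that the \emph{extreme} reverse differences (those near $-1$, which have the fewest representations as a sum of two jumps) each acquire a red-plus-blue representation, simultaneously for all of them and uniformly in $n$. I would dispose of the finitely many small orders $n=6,7,\dots$ (up to where a uniform pattern for $S,A,B$ becomes valid) by explicit check, and cover the generic range by a counting or greedy argument showing that $|A+B|$ is large enough to engulf $-S$. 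Should the sumset bookkeeping prove unwieldy, an alternative is an inductive scheme: start from an explicit $T_6$ with a good $2$-colouring and, at each step, append a new vertex with a richly two-coloured in- and out-neighbourhood, using the fact that all previously guaranteed length-$2$ paths survive so that only the pairs involving the new vertex need to be re-verified.
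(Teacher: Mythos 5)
This theorem is quoted in the paper from \cite{AM2017} without proof, so there is no in-paper argument to compare against; your proposal has to stand on its own. Its skeleton is sound: with two colours a rainbow path has length at most $2$, so a $2$-arc-colouring is rainbow connected precisely when every ordered non-adjacent pair has a bichromatic directed $2$-path, the appeal to Theorem \ref{pro2}(c)(i) to get $\overset{\rightarrow}{src}(T_n)=2$ for free is legitimate, and for circulant tournaments with jump set $S=A\dcup B$ the whole problem does reduce to the sumset condition $-S\subseteq A+B \pmod n$.

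The genuine gap is that this sumset condition \emph{is} the theorem, and you never establish it: no family of jump sets $S$ and bipartitions $A\dcup B$ is exhibited and verified for all $n\ge 6$, the even case is deferred to an unspecified ``re-run of the sumset count,'' and the fallback counting/greedy and inductive schemes are named but not carried out. This is not routine bookkeeping that a referee would wave through. For example, take $n=11$ and the kind of asymmetric set you suggest (keep $1\in S$, flip $2$), say $S=\{1,3,4,5,9\}$. Each element of $-S=\{2,6,7,8,10\}$ then has a \emph{unique} representation as a sum of two distinct jumps: $10=1+9$, $2=9+4$, $6=1+5$, $7=3+4$, $8=3+5$. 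Requiring each of these pairs to be bichromatic means properly $2$-colouring the graph on $\{1,3,4,5,9\}$ with edges $\{1,9\},\{9,4\},\{4,3\},\{3,5\},\{5,1\}$ --- a $5$-cycle, which is not bipartite. So for this $n$ your prescription admits no valid bipartition at all, and a different $S$ must be found; the same kind of obstruction is exactly what your ``additive covering'' step would have to rule out uniformly in $n$. Likewise, the inductive alternative needs an actual argument that the arcs at a newly added vertex can always be oriented and coloured so that all ordered pairs involving it acquire bichromatic $2$-paths; only the (easy) persistence of old paths is automatic. As it stands, the proposal is a plausible programme whose central construction --- the only part carrying the real content of the theorem --- is missing.
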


Lei, Li, Liu, and Shi \cite{LLLS2017} considered both the rainbow vertex-connection and strong rainbow vertex-connection analogues. We have $\overset{\rightarrow}{rvc}({\rC}_3)=\overset{\rightarrow}{srvc}({\rC}_3)=1$, and $\overset{\rightarrow}{rvc}(T_4)=\overset{\rightarrow}{srvc}(T_4)=2$. For general values of $n$, they proved the following results.

\begin{thm}\label{LLLStourthm1}\textup{\cite{LLLS2017}}
If $T$ is a strongly connected tournament on $n\ge 3$ vertices, then $1\le \overset{\rightarrow}{rvc}(T)\le \overset{\rightarrow}{srvc}(T)\le n-2$.
\end{thm}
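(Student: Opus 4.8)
The three relations split into two routine bounds and one substantial one. The middle inequality $\overset{\rightarrow}{rvc}(T)\le\overset{\rightarrow}{srvc}(T)$ is immediate, since a geodesic is in particular a directed path, so any strongly rainbow vertex-connected colouring is also rainbow vertex-connected. For the lower bound, a tournament on $n\ge 3$ vertices is an oriented graph, so for any adjacent pair $u,v$ exactly one of $uv,vu$ is an arc; hence $T$ is not a bioriented complete graph and, by Theorem \ref{pro2}(a), $\textup{diam}(T)\ge 2$. Combined with the inequality $\textup{diam}(D)-1\le\overset{\rightarrow}{rvc}(D)$ recorded in the introduction, this gives $\overset{\rightarrow}{rvc}(T)\ge 1$. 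So the whole content is the upper bound $\overset{\rightarrow}{srvc}(T)\le n-2$.

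For the upper bound, the plan is to exhibit a vertex-colouring with $n-2$ colours in which every ordered pair is joined by a vertex-rainbow geodesic. The starting point is that any geodesic is a directed path and so has at most $n$ vertices, i.e.\ at most $n-2$ internal vertices; thus colouring all $n$ vertices distinctly is trivially strongly rainbow vertex-connected but wastes two colours, and the task is exactly to save those two. The key structural fact I would exploit is that in a tournament a shortest $u$--$v$ path $w_0w_1\cdots w_d$ has no forward chord, so $w_jw_i\in A(T)$ for all $j\ge i+2$; this rigid ``staircase'' shape constrains how the internal vertices of a long geodesic may be arranged, and it is what provides room to re-route. Concretely, I would fix a Hamiltonian path $u_1u_2\cdots u_n$ (R\'edei's theorem) — or, exploiting strong connectivity, a Hamiltonian cycle (Camion's theorem) — assign the $n-2$ distinct colours $1,\dots,n-2$ to $u_2,\dots,u_{n-1}$, and reuse already-used colours on the two endpoints $u_1,u_n$. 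The conceptual target, stated colour-freely, is to find two disjoint pairs of vertices such that no geodesic is forced to carry both members of a pair as internal vertices, and then to make each such pair monochromatic.

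The main obstacle is the verification that this colouring is strongly rainbow vertex-connected. Geodesics of length at most $2$ are automatically safe, since they have at most one internal vertex; the only danger is a geodesic of length $\ge 3$ whose internal vertices include both vertices of a reused-colour pair. Here I would use the freedom to \emph{choose} which geodesic certifies each pair: whenever the two vertices sharing a colour both lie internally on the obvious geodesic, I would argue — using the no-forward-chord structure together with the Hamiltonian ordering and the abundance of cycles guaranteed by Moon's vertex-pancyclicity — that an alternative geodesic avoiding one of them exists. I expect this case analysis, showing that the two merged vertices can always be kept off a common certifying geodesic, to be the crux, and it is the step most sensitive to the precise choice of Hamiltonian structure: a median order (in which each $u_i$ beats at least half of any suffix $\{u_{i+1},\dots,u_j\}$) is the natural candidate for making the required rerouting always available. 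Finally, the tightness of the bound — witnessed, for example, by the ``reversed almost-transitive'' strong tournament whose unique long geodesic is a Hamiltonian path that forces all $n-2$ colours onto its internal vertices — confirms that no saving beyond two colours is possible, and explains why precisely two endpoints are merged.
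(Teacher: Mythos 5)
Your treatment of the two easy relations is correct: $\overset{\rightarrow}{rvc}(T)\le\overset{\rightarrow}{srvc}(T)$ is immediate, and $\overset{\rightarrow}{rvc}(T)\ge 1$ follows since a tournament on $n\ge 3$ vertices is not a bioriented complete graph, so $\textup{diam}(T)\ge 2$ by Theorem \ref{pro2}(a). But the entire content of the theorem is $\overset{\rightarrow}{srvc}(T)\le n-2$, and there your proposal stops exactly where the proof must start. You correctly identify the shape of the argument (make two disjoint pairs of vertices monochromatic, so that every ordered pair of vertices has a geodesic that never carries both members of a pair as internal vertices), but you never say which two pairs: the endpoints $u_1,u_n$ of your Hamiltonian path are to ``reuse already-used colours'' with unspecified partners, and the verification -- which you yourself call the crux -- is replaced by a list of tools (R\'edei, Camion, Moon's vertex-pancyclicity, median orders) and the statement that you expect a case analysis to succeed. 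That is a plan with a hole at its centre, not a proof.

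Moreover, the anchor you chose is not the one that closes the hole. This paper does not prove the statement itself (it is quoted from \cite{LLLS2017}), but the colouring from that proof is reproduced inside the proof of Theorem \ref{tournamentthm1}: one takes $a,b$ with $d(a,b)=d=\textup{diam}(T)$, a geodesic $aa_1\cdots a_{d-1}b$, and makes the two monochromatic pairs $\{a,a_{d-1}\}$ and $\{a_1,b\}$, all other vertices receiving distinct colours. Diameter-extremality is precisely the leverage that makes the check short. For $d\ge 4$: if some geodesic had both $a$ and $a_{d-1}$ internal, then either $a$ precedes $a_{d-1}$, forcing length at least $1+d(a,a_{d-1})+1=d+1>\textup{diam}(T)$; or $a_{d-1}$ immediately precedes $a$ (note $a_{d-1}a\in A(T)$ since $d(a,a_{d-1})=d-1\ge 2$), in which case the predecessor $w$ of $a_{d-1}$ satisfies $d(w,a)=2$, hence $aw\in A(T)$, and then $a\to w\to a_{d-1}\to b$ gives $d(a,b)\le 3<d$, a contradiction; the pair $\{a_1,b\}$ is symmetric. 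The case $d=3$ is handled by choosing the certifying geodesics, and $d=2$ separately since then one colour suffices by Theorem \ref{pro2}(c)(ii). The endpoints of a Hamiltonian path carry no comparable metric information -- nothing prevents $u_1$ or $u_n$ from sitting inside many geodesics -- so even granting your unspecified pairing, there is no reason the rerouting you hope for exists. The missing step is exactly the one you deferred, and the structure you deferred it to (Hamiltonian orderings rather than a diameter geodesic) is not the one that delivers it.
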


\begin{thm}\label{LLLStourthm2}\textup{\cite{LLLS2017}}
For $n\ge 5$ and $1\leq k \leq n-2$, there exists a tournament $T_{n,k}$ on $n$ vertices such that $\overset{\rightarrow}{rvc}(T_{n,k})=\overset{\rightarrow}{srvc}(T_{n,k})=k$.
\end{thm}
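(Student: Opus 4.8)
The plan is to squeeze $\overset{\rightarrow}{rvc}(T_{n,k})=\overset{\rightarrow}{srvc}(T_{n,k})=k$ between a lower bound coming from the diameter and an upper bound coming from an explicit colouring. Since we always have $\textup{diam}(D)-1\le\overset{\rightarrow}{rvc}(D)\le\overset{\rightarrow}{srvc}(D)$, it suffices to construct a strongly connected tournament $T_{n,k}$ on $n$ vertices with $\textup{diam}(T_{n,k})=k+1$ (which forces $\overset{\rightarrow}{rvc}\ge k$) together with a strongly rainbow vertex-connected colouring using only $k$ colours (which forces $\overset{\rightarrow}{srvc}\le k$). Then $k\le\overset{\rightarrow}{rvc}(T_{n,k})\le\overset{\rightarrow}{srvc}(T_{n,k})\le k$, and all three coincide.

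\textbf{Construction.} Write $d=k+1$ and first assume $k\ge 2$, so that $3\le d\le n-1$. I would partition $V(T_{n,k})$ into layers $L_0,L_1,\dots,L_d$, with $L_0=\{s\}$ and $L_d=\{t\}$ singletons and the remaining $n-2$ vertices distributed so that each internal layer $L_1,\dots,L_{d-1}$ is non-empty (possible exactly because $d-1\le n-2$). Orient the arcs by the rule: all arcs between consecutive layers point forward ($L_i\to L_{i+1}$ complete); all arcs between non-consecutive layers point backward ($L_j\to L_i$ complete for $j\ge i+2$); inside each layer orient transitively. The point of this rule is that the layer index can increase only along a consecutive forward arc, one step at a time, so there is no forward ``shortcut'' and $d(s,t)=d$, while any backward jump of size $\ge 2$ is a single arc. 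For $k=1$ the layered idea breaks down (same-layer pairs sit at distance $3$), so I would instead invoke any strongly connected diameter-$2$ tournament on $n\ge 5$ vertices --- for instance a suitable rotational tournament --- for which $\overset{\rightarrow}{rvc}=\overset{\rightarrow}{srvc}=1$ is immediate, since every geodesic then has at most one internal vertex.

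\textbf{Colouring and verification.} Colour each internal layer $L_\ell$ with colour $\ell$ for $1\le\ell\le d-1$, and give $s$ and $t$ the colour $1$; this uses exactly $k$ colours. For an ordered pair $(u,v)$ I would exhibit a rainbow geodesic by cases. If $v$ lies in a strictly higher layer, the forced geodesic passes through one vertex of each intermediate layer, whose colours are the distinct indices strictly between the two layers, so it is automatically vertex-rainbow. If $v$ lies two or more layers below $u$, there is a direct arc (distance $1$); if $v$ is one layer below, a two-step detour exists (hence one internal vertex). For $u,v$ in the same internal layer $L_i$, either a within-layer transitive path of length at most $2$ works, or the shortest route is the length-$3$ detour $u\to L_{i+1}\to L_{i-1}\to v$, whose two internal colours $i+1$ and $i-1$ are distinct; here $d\ge 3$ is precisely what guarantees these layers exist and that the detour is a genuine geodesic.

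The main obstacle I anticipate is pinning the diameter down to \emph{exactly} $k+1$ and confirming that the displayed paths really are shortest. The lower estimate $d(s,t)\ge d$ relies on the ``no forward shortcut'' argument above; the upper estimate requires that every other pair lie within distance $d$, which reduces to checking that backward pairs are at distance $\le 2$ and same-layer pairs at distance $\le 3$ --- the latter forcing $d\ge 3$ and hence the separate treatment of $k=1$. Once the geodesic structure is understood the colouring check becomes routine, because colours depend only on the layer, so any path whose internal vertices lie in distinct layers is rainbow; the only delicate pairs are the same-layer ones, resolved by the $i\pm 1$ detour.
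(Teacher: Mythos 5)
Since the paper only cites this theorem from \cite{LLLS2017} and gives no proof of its own, your proposal has to stand on its own merits, and it does not quite do so. The overall strategy is sound: the layered tournament you describe is strongly connected, has diameter exactly $k+1$ when $k\ge 2$ (arcs raise the layer index by at most one, so $d(s,t)=k+1$, while backward and same-layer pairs sit at distance at most $2$ and $3$ respectively), and hence $\overset{\rightarrow}{rvc}\ge k$. The gap is in the colouring verification for same-layer pairs, precisely at the boundary layers, and it is fatal at $k=2$. Your claim that the detour $u\to L_{i+1}\to L_{i-1}\to v$ has internal colours $i+1$ and $i-1$ silently assumes both $L_{i+1}$ and $L_{i-1}$ are \emph{internal} layers; if $i=d-1$ the first internal vertex is $t$, which you coloured $1$, and if $i=1$ the second is $s$, also coloured $1$. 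Now take $k=2$ (so $d=3$) and a distribution with $|L_2|\ge 2$, which your prescription allows (e.g.\ $|L_1|=1$, $|L_2|=n-3$), and let $u,v\in L_2$ with $v\to u$ inside the transitive layer. Then $d(u,v)=3$ and every $u$--$v$ geodesic has the form $u\to t\to w\to v$ or $u\to s\to w\to v$ with $w\in L_1$, so both internal vertices have colour $1$. Worse: any $u$--$v$ path of any length must re-enter $L_2$ through some $w\in L_1$, and the predecessor of $w$ on the path is necessarily one of $s$, $t$, or another $L_1$-vertex (arcs from $L_2$ to $L_1$ do not exist), all coloured $1$; so there is no vertex-rainbow $u$--$v$ path at all, and your colouring is not even rainbow vertex-connected, let alone strongly so. Note also that your fallback ``within-layer transitive path of length at most $2$'' never exists for such pairs, since a transitive tournament has no path from a dominated vertex back to a dominating one; this is harmless in itself but shows the same-layer analysis was not carried through.

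The construction is salvageable: either force $|L_{d-1}|=1$ when $d=3$, or give $s$ and $t$ distinct colours and redo the boundary cases (which then need the alternative detour $u\to L_{i-2}\to L_{i-1}\to v$ for $i=d-1$ when $d=4$). That case analysis at the layers adjacent to $s$ and $t$ is exactly what is missing. Two further remarks. First, for $k\ge 3$ your colouring does work for any layer sizes, because a same-layer pair in $L_i$ with $i\le d-2$ has the rainbow geodesic $u\to L_{i+1}\to L_{i+2}\to v$ (colours $i+1$ and either $i+2$ or $1$), and for $i=d-1$ the detour through $t$ (colour $1$) and $L_{d-2}$ (colour $d-2\neq 1$) is rainbow; so the genuine gap is confined to $k=2$. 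Second, for $k=1$ your appeal to ``a suitable rotational tournament'' only covers odd $n$; for even $n\ge 6$ you still need to exhibit (or cite) a strongly connected tournament of diameter $2$, which exists but requires an explicit construction.
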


Here, we consider the same situation for the (strong) total rainbow connection number. Clearly, we have $\overset{\rightarrow}{\smash{t}rc}({\rC}_3)=\overset{\rightarrow}{s\smash{t}rc}({\rC}_3)=3$, and $\overset{\rightarrow}{\smash{t}rc}(T_4)=\overset{\rightarrow}{s\smash{t}rc}(T_4)=5$. Now, Theorem \ref{tournamentthm1} below is the analogous result to Theorems \ref{Dorbecetaltourthm1} and \ref{LLLStourthm1}.

\begin{thm}\label{tournamentthm1}
If $T$ is a strongly connected tournament on $n\ge 3$ vertices, then
\[
3\le \overset{\rightarrow}{\smash{t}rc}(T)\le \overset{\rightarrow}{s\smash{t}rc}(T)\le 2n-3.
\]
\end{thm}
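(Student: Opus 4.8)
The two outer relations come essentially for free. Since $T$ is an oriented graph on $n\ge 3$ vertices it contains an asymmetric arc, so it is not a bioriented complete graph, and Theorem \ref{pro2}(b) gives $\overset{\rightarrow}{s\smash{t}rc}(T)\ge\overset{\rightarrow}{\smash{t}rc}(T)\ge 3$; the middle inequality $\overset{\rightarrow}{\smash{t}rc}(T)\le\overset{\rightarrow}{s\smash{t}rc}(T)$ is part of (\ref{eq}). Thus the whole content of the theorem is the upper bound $\overset{\rightarrow}{s\smash{t}rc}(T)\le 2n-3$, and my plan is to construct an explicit strongly total rainbow connected total-colouring realising it.

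The guiding observation is that a strongly connected tournament satisfies $\textup{diam}(T)\le n-1$, so every geodesic has at most $n-1$ arcs and at most $n-2$ internal vertices, i.e.\ at most $2n-3$ coloured elements in all. This makes it natural to aim for a colouring built from two disjoint palettes: a set of $n-1$ ``arc-colours'' and a set of $n-2$ ``vertex-colours'', for a total of $2n-3$. Because the palettes are disjoint, a geodesic is total-rainbow exactly when it is simultaneously rainbow on its arcs and rainbow on its internal vertices. The two ingredients exist separately already: Theorem \ref{Dorbecetaltourthm1} supplies a strongly rainbow connected arc-colouring with at most $n-1$ colours, and Theorem \ref{LLLStourthm1} a strongly rainbow vertex-connected vertex-colouring with at most $n-2$ colours. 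The difficulty is that the arc-rainbow geodesic and the vertex-rainbow geodesic produced for a given ordered pair need not be the same path, so the two colourings cannot simply be overlaid; the genuine task is to produce arc- and vertex-colourings that admit, for every ordered pair, a \emph{common} witnessing geodesic.

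To force such a common witness I would argue by induction on $n$, with the base cases $\overset{\rightarrow}{s\smash{t}rc}(\rC_3)=3$ and $\overset{\rightarrow}{s\smash{t}rc}(T_4)=5=2\cdot 4-3$ recorded just before the statement. For $n\ge 5$ I would first choose a vertex $v$ for which $T-v$ is again strongly connected; such a vertex exists in every strong tournament of order at least $4$, since by Moon's vertex-pancyclicity theorem every vertex lies on a cycle of length $n-1$, whose complementary vertex $v$ then has $T-v$ spanned by that cycle and hence strong. Applying the induction hypothesis to $T-v$ yields a strongly total rainbow connected colouring with at most $2(n-1)-3=2n-5$ colours, leaving exactly two fresh colours to spend. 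I would keep the colouring of $T-v$, colour $v$ with one new colour, colour one chosen arc incident with $v$ with the other, and reuse an old colour on the remaining incident arc. Every pair of vertices inside $T-v$ keeps its inherited total-rainbow geodesic, so only the pairs $(v,x)$ and $(x,v)$ remain to be verified, and for these I would exploit the fact that geodesics in a tournament are \emph{backward-complete}: on a shortest path $w_0w_1\cdots w_k$ one has $w_jw_i\in A(T)$ for all $j\ge i+2$ (a chord $w_iw_j$ would shorten the path). This property both restricts which paths through $v$ can be geodesics and lets me route $v$'s traffic so that inherited colours stay distinct.

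The main obstacle is precisely this reinsertion step. A geodesic newly created through $v$ may have length close to $n-1$, hence may need nearly the whole palette of $2n-3$ colours to be rainbow, yet only two colours are fresh. Making the step work requires choosing $v$ and the geodesics through it so that $v$, its two incident arcs, and the inherited colours on the two flanking sub-paths never collide — most cleanly by arranging that $v$ reaches, and is reached from, the rest of $T$ through an out-neighbour and an in-neighbour that are themselves adjacent in $T-v$, so that the new geodesics have length $2$ or $3$; if no single $v$ affords this uniformly, a local recolouring of $T-v$ (which is permitted, as only the final colour count matters) should repair it. Should the induction prove too delicate to control, the fallback is the direct two-palette construction of the second paragraph, where the obstacle reappears in the equivalent form of engineering the $(n-1)$-arc-colouring and the $(n-2)$-vertex-colouring from one common vertex ordering so that a single geodesic witnesses both conditions at once.
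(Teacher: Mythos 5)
Your reductions of the lower bound and the middle inequality are fine, and your ``fallback'' paragraph correctly locates the idea the paper actually uses; but the argument you commit to --- induction on $n$ by deleting a vertex $v$ with $T-v$ strong --- has a genuine gap, in fact two. First, the induction hypothesis does not transfer, because you are proving a bound on $\overset{\rightarrow}{s\smash{t}rc}$, not $\overset{\rightarrow}{\smash{t}rc}$: adding $v$ back can create shortcuts, so for $x,y\in V(T-v)$ one may have $d_T(x,y)<d_{T-v}(x,y)$, and then the inherited total-rainbow geodesic of $T-v$ is no longer a \emph{geodesic} of $T$. Your claim that ``every pair of vertices inside $T-v$ keeps its inherited total-rainbow geodesic'' is therefore false in general; all pairs whose distance drops need new witnessing geodesics, and these all pass through $v$, so the problem is much larger than the pairs $(v,x)$, $(x,v)$ you propose to check. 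Second, the reinsertion step you yourself flag as the main obstacle cannot be repaired in the way you suggest: for the strong parameter you are not free to ``route'' traffic through short paths, since a witnessing path for $(v,x)$ must have length exactly $d_T(v,x)$, which can be close to $\textup{diam}(T)$; such a geodesic needs nearly $2d_T(v,x)-1$ distinct colours on elements almost all of which lie in $T-v$ and were coloured by an induction hypothesis that promises nothing about \emph{which} geodesics of $T-v$ are rainbow. With only two fresh colours there is no general way to fix this, and ``a local recolouring should repair it'' is an assertion, not an argument. (A smaller slip: in a tournament $v$ has $n-1$ incident arcs, not two, so ``the remaining incident arc'' also needs attention.)

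The paper avoids all of this by making your fallback work, and the point you judged to be an unresolved obstacle there is exactly what the cited constructions settle. For $\textup{diam}(T)=2$ it takes a strongly rainbow connected arc-colouring with at most $n-1$ colours (Theorem \ref{Dorbecetaltourthm1}) and one extra colour on all vertices. For $d=\textup{diam}(T)\ge 3$ it overlays the \emph{specific} arc-colouring $c'$ of \cite{SS2016} (at most $n-1$ colours, essentially $c'(uv)=f(u)$ for a bijection $f$, with adjustments at one vertex $b$) with the \emph{specific} vertex-colouring $c''$ of \cite{LLLS2017} (at most $n-2$ colours), on disjoint palettes, giving $2n-3$. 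The ``common witness'' difficulty dissolves because $c'$ has the strong property that \emph{every} geodesic is arc-rainbow (colours along a path are the $f$-values of its distinct tails), while $c''$ makes every geodesic vertex-rainbow when $d\ge 4$ and at least one geodesic per pair vertex-rainbow when $d=3$; hence any vertex-rainbow geodesic is automatically a common witness. So the theorem is not proved by generic overlay or by induction, but by engineering one arc-colouring whose rainbow property is universal over geodesics --- the ingredient missing from your proposal.
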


\begin{proof}
The theorem holds for $n=3,4$. Now, let $n\ge 5$. Clearly by (\ref{eq}), we have $\overset{\rightarrow}{s\smash{t}rc}(T)\ge\overset{\rightarrow}{\smash{t}rc}(T)\ge 3$, so it remains to prove that $\overset{\rightarrow}{s\smash{t}rc}(T)\le 2n-3$.

Let $d=\textup{diam}(T)\ge 2$. If $d=2$, then by Theorem \ref{Dorbecetaltourthm1}, there exists a strongly rainbow connected arc-colouring of $T$, using at most $n-1$ colours. By assigning a new colour to all vertices of $T$, we obtain a strongly total rainbow connected total-colouring of $T$, using at most $n\le 2n-3$ colours. Indeed, if $x,y\in V(T)$, then either $xy\in A(T)$; or $xy\not\in A(T)$ and there exists a total-rainbow $x-y$ path of length $2$, which is also an $x-y$ geodesic.

Therefore, we may assume that $d\ge 3$. We shall define a total-colouring $c$ of $T$ with $2n-3$ colours, and show that $c$ is a strongly total rainbow connected colouring. Our approach will use ideas in the proofs of Theorems \ref{Dorbecetaltourthm1} and \ref{LLLStourthm1} (see \cite{SS2016}, Theorem 16; and \cite{LLLS2017}, Theorem 20). Define an arc-colouring $c':A(T)\to\{1,2,\dots,n-1\}$ and a vertex-colouring $c'':V(T)\to\{\alpha_1,\alpha_2,\dots,\alpha_{n-2}\}$ as follows. Let $a,b\in V(T)$ be two vertices with $d(a,b)=d$, and let $aa_1a_2\cdots a_{d-1}b$ be an $a-b$ geodesic. Note that we have $ba\in A(T)$. Let $f:V(T)\to\{1,2,\dots,n\}$ be a bijective mapping such that $f(b)=n$. We let
\[
\begin{array}{rcll}
c'(uv) &\!\!\!=\!\!\!& f(u), & \textup{for every arc $uv\in A(T)$ with $u\neq b$,}\\
c'(ba) &\!\!\!=\!\!\!& f(a_1), &\\
c'(bu) &\!\!\!=\!\!\!& f(a), & \textup{for every $u\in V(T-\{a,b\})$ with $bu\in A(T)$.}
\end{array}
\]
Also, note that $a,a_1,a_{d-1},b$ are distinct vertices. For $c''$, we let $c''(a)=c''(a_{d-1})=\alpha_1$, $c''(a_1)=c''(b)=\alpha_2$, and the remaining vertices are given the distinct colours $\alpha_3,\dots,\alpha_{n-2}$. 

Now let $c$ be the total-colouring of $T$ obtained by combining $c'$ and $c''$. That is,
\[
\begin{array}{rcll}
c(uv) &\!\!\!=\!\!\!& c'(uv), & \textup{for every $uv\in A(T)$,}\\
c(w) &\!\!\!=\!\!\!& c''(w), & \textup{for every $w\in V(T)$.}
\end{array}
\]
Let $x,y\in V(T)$. From the proofs of Theorems \ref{Dorbecetaltourthm1} and \ref{LLLStourthm1}, the following facts can be seen.
\begin{enumerate}
\item[(i)] In the arc-colouring $c'$, every $x-y$ geodesic is rainbow.
\item[(ii)] In the vertex-colouring $c''$, every $x-y$ geodesic is vertex-rainbow, if $d\ge 4$.
\item[(iii)] In the vertex-colouring $c''$, there exists a vertex-rainbow $x-y$ geodesic, if $d=3$.
\end{enumerate}
By (ii) and (iii), we can choose a vertex-rainbow $x-y$ geodesic $P$, with respect to $c''$. Then by (i), $P$ is also a rainbow $x-y$ geodesic, with respect to $c'$. Therefore, $P$ is a total-rainbow $x-y$ geodesic, with respect to $c$. It follows that $c$ is a strongly total rainbow connected colouring for $T$. This completes the proof.
\end{proof}

Next, Theorem \ref{tournamentthm2} below is an analogue to Theorems \ref{Dorbecetaltourthm2}, \ref{ASMBtourthm}, and \ref{LLLStourthm2}.

\begin{thm}\label{tournamentthm2}
For $n\ge 5$ and $3\leq k \leq 2n-3$ with $k$ odd, there exists a tournament $T_{n,k}$ on $n$ vertices  such that 
\begin{equation}\label{tournamenteq1}
\overset{\rightarrow}{\smash{t}rc}(T_{n,k})=\overset{\rightarrow}{s\smash{t}rc}(T_{n,k})=k.
\end{equation}
\end{thm}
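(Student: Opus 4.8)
The plan is to realise each odd value $k$ as $k=2d-1$, where $d$ is the diameter of the tournament $T_{n,k}$ to be constructed, so that the matching lower bound is handed to us for free by Proposition \ref{prop1}. Writing $k=2d-1$, the constraints $n\ge 5$ and $3\le k\le 2n-3$ become $2\le d\le n-1$, so it suffices, for each such $d$, to build a strongly connected tournament $T_{n,k}$ on $n$ vertices with $\textup{diam}(T_{n,k})=d$ and $\overset{\rightarrow}{s\smash{t}rc}(T_{n,k})\le 2d-1$. Once $\textup{diam}(T_{n,k})=d$ is known, (\ref{eq}) gives $\overset{\rightarrow}{\smash{t}rc}(T_{n,k})\ge 2\,\textup{diam}(T_{n,k})-1=2d-1=k$, and combined with $\overset{\rightarrow}{\smash{t}rc}\le\overset{\rightarrow}{s\smash{t}rc}\le 2d-1$ this squeezes out (\ref{tournamenteq1}). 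Two extreme cases are immediate. For $k=2n-3$ (that is, $d=n-1$) one takes the tournament $U_n$ on $v_0,\dots,v_{n-1}$ with $v_iv_{i+1}\in A$ for all $i$ and $v_jv_i\in A$ whenever $j\ge i+2$; here each $v_i$ with $i<n-1$ has unique out-neighbour $v_{i+1}$, so $v_0v_1\cdots v_{n-1}$ is the unique longest geodesic, $\textup{diam}(U_n)=n-1$, and Theorem \ref{tournamentthm1} already supplies a colouring with $2n-3$ colours. For $k=3$ (that is, $d=2$) one needs a diameter-$2$ tournament with $\overset{\rightarrow}{\smash{t}rc}=3$: for $n\ge 6$ this follows by taking the tournament of Theorem \ref{ASMBtourthm} with $\overset{\rightarrow}{rc}=2$ and applying Theorem \ref{pro2}(c), while the single case $n=5$ is dispatched by exhibiting a suitable tournament directly.

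For the generic range $3\le d\le n-2$, I would start from the same local tournament $U_{d+1}$ on $v_0,\dots,v_d$ and expand its terminal vertex $v_d$ into a transitive tournament $H=h_1h_2\cdots h_{n-d}$, that is, pass to $(U_{d+1})_{v_d\to H}$. The point of expanding the terminal vertex is that the long geodesics remain essentially unique: the forced forward path $v_0v_1\cdots v_{d-1}$ is common to all of them, and each distance-$d$ geodesic closes off with a single arc $v_{d-1}h_\ell$, so that the family of distance-$d$ pairs is exactly $\{(v_0,h_\ell)\}$ and $\textup{diam}((U_{d+1})_{v_d\to H})=d$. One checks that every remaining pair is joined by a geodesic of length at most $d$ (the within-$H$ and $H$-to-outside pairs use the backward arcs $h_\ell v_i$ and are reached in at most three steps), so the diameter is exactly $d$.

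The upper bound is then obtained by colouring the common prefix $v_0v_1\cdots v_{d-1}$ with the $2d-2$ colours $1,\dots,d-1$ on its arcs and $d,\dots,2d-2$ on its internal vertices, reserving the colour $2d-1$ for all the closing arcs $v_{d-1}h_\ell$; the remaining arcs and vertices of the expanded block and of $U_{d+1}$ are then coloured by reusing this palette of $2d-1$ colours. Following the strategy of Theorem \ref{tournamentthm1}, for every ordered pair $(x,y)$ I would produce an $x$-$y$ geodesic that is simultaneously arc-rainbow and vertex-rainbow, checking this over the finitely many pair types (both endpoints on the backbone; one endpoint in $H$; both in $H$). This would give $\overset{\rightarrow}{s\smash{t}rc}(T_{n,k})\le 2d-1$ and hence (\ref{tournamenteq1}).

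The main obstacle is the tension built into the construction: the diameter must be forced to be exactly $d$, since a smaller diameter would invalidate the lower bound $2d-1$ while a larger one would force strictly more than $2d-1$ colours, yet the tournament must still admit a total-colouring from a palette of only $2d-1$ colours. Because every length-$d$ geodesic must then use all $2d-1$ colours, any family of distance-$d$ geodesics with incompatible forced structure would create a cyclic colour obstruction of the kind that already rules out the rotational tournament on five vertices. Expanding the terminal vertex is precisely what keeps these long geodesics compatible; the delicate part of the argument is verifying that the reused colours never collide along any of the shorter geodesics inside and around the expanded block.
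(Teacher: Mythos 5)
Your proposal is correct and follows the same skeleton as the paper's proof: your base tournament $U_{d+1}$ is exactly the paper's $T_{N,k}$ (with $N=d+1=\frac{k+3}{2}$), the lower bound comes from forcing the diameter to equal $d$ and invoking (\ref{eq}), the cases $k=3$ and $k=2n-3$ are handled identically, and the padding to $n$ vertices is done by expanding a single vertex of the base tournament. The one real difference is which vertex you expand and into what: the paper expands the \emph{source} $v_0$ into a tournament $T$ of known small strong total rainbow connection number (a single arc, $\rC_3$, $T_4$, or $T_{n-N+1,3}$) and extends the explicit colouring of $T_{N,k}$, which forces a case analysis for pairs $x,y\in V(T)$; you expand the \emph{sink} into a transitive tournament $H$. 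Your choice is arguably cleaner: since the only way into $H$ from outside is through $v_{d-1}$, the vertices $h_\ell$ never occur as internal vertices of any needed geodesic and the arcs inside $H$ occur only as single-arc geodesics, so $H$ needs no rainbow structure of its own. The step you leave unverified --- that the remaining elements can reuse the palette of $2d-1$ colours --- does go through, and simply: give every closing arc $v_{d-1}h_\ell$ and every backbone back-arc $v_iv_j$ the colour $2d-1$; give every arc $h_\ell v_j$ and every arc inside $H$ the colour $1$; give $v_j$ the colour $d-1+j$ for $1\le j\le d-1$, and give $v_0$ and every $h_\ell$ the colour $d$. Then each pair type has a total-rainbow geodesic (forward pairs use the forward path, $(v_i,v_{i-1})$ uses $v_iv_{i-2}v_{i-1}$ for $i\ge2$ and $(v_1,v_0)$ uses $v_1v_2v_0$, $(h_\ell,v_{d-1})$ uses $h_\ell v_{d-2}v_{d-1}$, $(h_\ell,h_m)$ with $\ell>m$ uses $h_\ell v_{d-2}v_{d-1}h_m$, and all remaining pairs are arcs), where the checks use $d\ge3$, which holds in your generic range. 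Two small slips: the prefix vertices receiving colours $d,\dots,2d-2$ must include $v_{d-1}$ (it is internal in every $v_0$--$h_\ell$ geodesic), so there are $d-1$ of them, not $d-2$; and your closing remark about a cyclic colour obstruction "ruling out" the rotational tournament on five vertices is off --- that tournament is precisely the paper's $T_{5,3}$, which \emph{attains} $\overset{\rightarrow}{\smash{t}rc}=3$.
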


\begin{proof}
We first consider the case $k=3$. By Theorem \ref{ASMBtourthm}, we know that for every $n\ge 6$, there exists a tournament $T_{n,3}$ with $\overset{\rightarrow}{rc}(T_{n,3})=2$. Thus by Theorem \ref{pro2}(c), we have $\overset{\rightarrow}{\smash{t}rc}(T_{n,3})=\overset{\rightarrow}{s\smash{t}rc}(T_{n,3})=3$. For $n=5$, we let $T_{5,3}$ be the tournament which is the union of two copies of ${\rC}_5$: $v_0v_1v_2v_3v_4v_0$ and $v_0v_3v_1v_4v_2v_0$. Then it is easy to check that $\overset{\rightarrow}{\smash{t}rc}(T_{5,3})=\overset{\rightarrow}{s\smash{t}rc}(T_{5,3})=3$.

Now, let $k\ge 5$ be odd. We first consider the case $N=N_k=\frac{k+3}{2}\ge 4$, and construct a tournament $T_{N,k}$ on $N$ vertices such that (\ref{tournamenteq1}) holds. Let $V(T_{N,k})=\{v_0,\dots,v_{N-1}\}$, and $A(T_{N,k})=\{v_{i-1}v_i:1\le i\le N-1\}\cup\{v_iv_j:0\le j<i\le N-1$ and $i-j\ge 2\}$. Note that we have $T_{4,5}=T_4$. Since $v_0\cdots v_{N-1}$ is the only $v_0-v_{N-1}$ path in $T_{N,k}$, we have diam$(T_{N,k})\ge N-1$, and thus $\overset{\rightarrow}{\smash{t}rc}(T_{N,k})\ge 2N-3=k$ by (\ref{eq}). Now, consider the total-colouring $c$ of $T_{N,k}$ where $c(v_{i-1}v_i)=i$ for $1\le i\le N-1$; $c(v_iv_j)=N-1$ if $0\le j<i\le N-1$ and $i-j\ge 2$; $c(v_i)=\alpha_i$ for $1\le i\le N-2$; and $c(v_0)=c(v_{N-1})=\alpha_1$. Then $c$ uses $2N-3=k$ colours. Let $v_i,v_j\in V(T_{N,k})$. If $j>i$, then $v_iv_{i+1}\cdots v_j$ is the unique $v_i-v_j$ path, and if $j\le i-2$, then $v_iv_j\in A(T_{N,k})$. If $j=i-1$, then $v_iv_j\not\in A(T_{N,k})$, and a $v_i-v_j$ geodesic is $v_iv_{i-2}v_{i-1}$ if $2\le i\le N-1$, and $v_1v_2v_0$ if $i=1$. In each case, we always have a total-rainbow $v_i-v_j$ geodesic, so that $\overset{\rightarrow}{s\smash{t}rc}(T_{N,k})\le k$. Therefore, (\ref{tournamenteq1}) holds for $T_{N,k}$, by (\ref{eq}).

Finally, let $n>N$. We construct a tournament $T_{n,k}$ on $n$ vertices such that (\ref{tournamenteq1}) holds. Let $T$ be a tournament on $|V(T)|=n-N+1\ge 2$ vertices, where $T$ is a single arc if $|V(T)|=2$; $T=\rC_3$ if $|V(T)|=3$; $T=T_4=T_{4,5}$ if $|V(T)|=4$; and $T=T_{n-N+1,3}$ if $|V(T)|\ge 5$. All of these tournaments exist from our previous arguments. Let $T_{n,k}=(T_{N,k})_{v_0\to T}$, which is obtained from $T_{N,k}$ by expanding $v_0$ into $T$. Then, note that we have diam$(T_{n,k})\ge N-1$, which again gives $\overset{\rightarrow}{\smash{t}rc}(T_{n,k})\ge k$. Now, we extend the colouring $c$ on $T_{N,k}$ to a colouring $c'$ on $T_{n,k}$ by letting $c'(uv)=c(uv)$ if $uv\not\in A(T)$; $c'(uv)=c(v_0v)$ if $u\in V(T)$ and $v\not\in V(T)$; $c'(uv)=c(uv_0)$ if $u\not\in V(T)$ and $v\in V(T)$; $c'(w)=c(w)$ if $w\not\in V(T)$; and
\begin{itemize}
\item if $|V(T)|=2$, let $c'(uv)=1$ and $c'(u)=c'(v)=\alpha_1$, where $uv$ is the only arc of $T$;
\item if $|V(T)|\not\in\{2,4\}$ (resp.~$|V(T)|=4$), let $c'$ be a strongly total rainbow connected colouring when restricted to $T$, using three (resp.~five) colours that are already used.
\end{itemize}
From previous arguments and remarks, the strongly total rainbow connected colourings on $T$ exist for $|V(T)|\ge 3$. We claim that $c'$ is a strongly total rainbow connected colouring for $T_{n,k}$. Let $x,y\in V(T_{n,k})$. A similar argument as for $T_{N,k}$ shows that, there exists a total-rainbow $x-y$ geodesic in $T_{n,k}$, if we do not have $x,y\in V(T)$. Now, let $x,y\in V(T)$. Then, we have exactly one of the following three situations.
\begin{itemize}
\item $xy\in A(T)\subset A(T_{n,k})$.
\item $xy\not\in A(T)$, and there exists $z\in V(T)$ such that $xzy$ is a path in $T\subset T_{n,k}$. Then there exists a total-rainbow $x-y$ path of length $2$ in $T$, which is also a total-rainbow $x-y$ geodesic in $T_{n,k}$.
\item $xy\not\in A(T)$, and there does not exist $z\in V(T)$ such that $xzy$ is a path in $T\subset T_{n,k}$. Then, $xv_1v_2y$ is a total-rainbow $x-y$ geodesic in $T_{n,k}$, since $N\ge 4$.
\end{itemize}
In each case, we have a total-rainbow $x-y$ geodesic in $T_{n,k}$, and the claim holds. Therefore, $\overset{\rightarrow}{s\smash{t}rc}(T_{n,k})\le k$. Again (\ref{tournamenteq1}) holds, by (\ref{eq}).
\end{proof}

In Theorem \ref{tournamentthm2}, we have not been able to consider the case when $k$ is even. Thus we pose the following problem.

\begin{prob}
Do there exist $n,k$ with $4\le k\le 2n-4$ and $k$ even such that, there exists a tournament $T_{n,k}$ on $n$ vertices with $\overset{\rightarrow}{\smash{t}rc}(T_{n,k})=k$? Similarly, what happens for $\overset{\rightarrow}{s\smash{t}rc}(T_{n,k})$?
%
\end{prob}

In \cite{PIE2014}, Dorbec, Schiermeyer, Sidorowicz, and Sopena also proved the following result, which shows that the rainbow connection number of a tournament is in fact closely related to its diameter.

\begin{thm}\label{Dorbecetaltourthm3}\textup{\cite{PIE2014}}
Let $T$ be a tournament of diameter $d$. We have $d\le\overset{\rightarrow}{rc}(T)\le d+2$.
\end{thm}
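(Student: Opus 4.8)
The lower bound is immediate from the general inequality $\textup{diam}(D)\le\overset{\rightarrow}{rc}(D)$ noted in the introduction: if $a,b$ are vertices with $d(a,b)=d$, then any rainbow $a$-$b$ path has at least $d$ arcs and hence uses at least $d$ colours. So the whole content is the upper bound $\overset{\rightarrow}{rc}(T)\le d+2$, and the plan is to exhibit a rainbow connected arc-colouring with $d+2$ colours by exploiting the layered structure of a tournament relative to a diametral pair.

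Concretely, I would fix vertices $a,b$ with $d(a,b)=d$ and partition $V(T)$ into the out-distance classes $D_i=\{u:d(a,u)=i\}$ for $0\le i\le d$, where $D_0=\{a\}$ and $b\in D_d$. The first step is to record the structural facts that make a tournament so rigid here: (F1) there is no arc from $D_i$ to $D_j$ when $j\ge i+2$ (such an arc would shorten a distance), so every arc joining two non-consecutive layers is directed from the higher-indexed layer to the lower one; (F2) every vertex of $D_{i+1}$ has an in-neighbour in $D_i$; and (F3) in particular, for $i\ge 2$ every $x\in D_i$ satisfies $xa\in A(T)$, since $ax\notin A(T)$. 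I would then colour each \emph{forward} arc, i.e.\ each arc from $D_{i-1}$ to $D_i$, with the colour $i$ (using the colours $1,\dots,d$), and reserve the two extra colours $d+1,d+2$ for all remaining arcs, namely the within-layer arcs and the backward arcs guaranteed by (F1).

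With this skeleton, a geodesic from $a$ to any $y\in D_j$ is forward and therefore rainbow, using the colours $1,\dots,j$; this settles every pair $(a,y)$. For a general pair $(x,y)$ with $x\in D_i$, $i\ge 2$, I would route $x\rightsquigarrow a\rightsquigarrow y$: the single backward arc $xa$ (coloured $d+1$ by (F3)) followed by a forward geodesic from $a$ to $y$ is rainbow, and the potential vertex-overlap (when $x$ already lies on that geodesic) is handled by instead taking the forward tail of the geodesic from $x$ onward, which is again rainbow. The point of keeping two reserve colours rather than one is to colour the within-layer and backward arcs so that the short detours needed for the remaining pairs never repeat a colour: for instance a layer-$1$ vertex $x$ with an out-neighbour $w\in D_2$ reaches $a$ along $x,w,a$ with colours $2,d+1$, and same-layer pairs are crossed by a within-layer arc coloured $d+2$ followed by forward arcs.

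The hard part will be exactly this return-routing from the lowest layers, above all from $D_1$, whose vertices have no arc back to $a$ by (F2) and (F3). I would need to argue that every such vertex can reach $a$, or can join a suitable forward geodesic to $y$, along a detour of at most two non-forward arcs, so that the single pair of colours $d+1,d+2$ always suffices without clashing against the forward colours on the rest of the path. Verifying that a bounded detour always exists, and fixing one explicit rule for colouring the within-layer and backward arcs that simultaneously handles all the residual cases $x\in D_0\cup D_1$ and $y\in D_0\cup D_1$, is where the real case analysis lies; the strong connectivity of $T$ together with (F1) should force enough two-step escapes $D_1\to D_2\to D_0$ to push this through.
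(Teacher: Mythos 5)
Your lower bound is fine, and your decomposition with facts (F1)--(F3) is exactly the one used in \cite{PIE2014} and reproduced in this paper's proof of Theorem \ref{tournamentthm3}; the routing for $x\in D_i$ with $i\ge 2$ (one backward arc $xa$, colour $d+1$, then a forward geodesic) is also sound. But the case $x\in D_1$, which you explicitly leave open, is a genuine gap, and the repair you sketch fails on two counts. First, the two-step escape $D_1\to D_2\to D_0$ need not exist: (F2) only guarantees that each vertex of $D_2$ has an in-neighbour in $D_1$, and a particular $x\in D_1$ may have all of its arcs to $D_2$ oriented towards it, so every path out of $x$ must first move inside $D_1$, and strong connectivity alone puts no bound of two on that detour. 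Second, even when $x$ does have an out-neighbour $w\in D_2$, the route $x\,w\,a$ followed by a forward $a$--$y$ geodesic carries colours $2,d+1,1,2,\dots,j$, repeating colour $2$ whenever $y\in D_j$ with $j\ge 2$; so under your colouring this path is not rainbow, and routing $D_1$ backward through $a$ is structurally incompatible with giving all $D_1\to D_2$ arcs the same colour.

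The missing ingredient --- this is how \cite{PIE2014} argues, and it is spelled out in the proof of Theorem \ref{tournamentthm3}(b) in this paper --- is to choose extremal vertices and route $D_1$ \emph{forward}, not backward. Take $p\in D_1$ of maximum in-degree in $T[D_1]$: then every $x\in D_1$ reaches $p$ by a path of length at most two inside $D_1$, for otherwise the in-neighbourhood of $x$ in $T[D_1]$ would contain $p$ together with all in-neighbours of $p$, contradicting the choice of $p$. Symmetrically take $q\in D_d$ of maximum out-degree in $T[D_d]$. A pair $(x,y)$ with $x\in D_1$ is then connected by going from $x$ to $p$ (at most two steps in $D_1$), along a $p$--$q$ geodesic, and finally by one backward arc $qy$, which exists by (F1) whenever $y\in D_j$ with $j\le d-2$; the residual cases $y\in D_{d-1}\cup D_d$ are handled by a short detour at the $q$-end, again using the extremal choice of $q$. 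Staying within $d+2$ colours additionally requires a colour reuse your scheme lacks: the within-$D_1$ arcs into $p$ can safely receive colour $1$ (one checks that no required path uses both such an arc and an arc $av$ with $v\in D_1$), so that a route such as $x\,w\,p\,\cdots\,q\,y$ receives colours $d+2,1,2,\dots,d,d+1$, all distinct. Without the extremal choices of $p$ and $q$ and this reuse, two reserve colours cannot close all of your residual cases.
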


Lei, Li, Liu, and Shi \cite{LLLS2017} then proved an analogous result for the rainbow vertex-connection number, as follows.

\begin{thm}\label{LLLStourthm3}\textup{\cite{LLLS2017}}
Let $T$ be a tournament of diameter $d$. We have $d-1\le\overset{\rightarrow}{rvc}(T)\le d+3$.
\end{thm}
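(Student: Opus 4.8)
The plan is to prove the two bounds separately. The lower bound $\overset{\rightarrow}{rvc}(T)\ge d-1$ is immediate from the general inequality $\textup{diam}(D)-1\le\overset{\rightarrow}{rvc}(D)$ noted in the introduction: any vertex-rainbow path joining the two vertices at distance $d$ has length at least $d$, hence at least $d-1$ internal vertices, and these must all receive distinct colours. The substance is the upper bound $\overset{\rightarrow}{rvc}(T)\le d+3$, for which I would fix a diametral geodesic $P=v_0v_1\cdots v_d$ with $d(v_0,v_d)=d$ and use $P$ as a backbone. Since a strong tournament has $d\ge 2$, the arc $v_0v_d$ cannot be present (else $d(v_0,v_d)=1$), so $v_dv_0\in A(T)$; this back-arc will let me traverse $P$ cyclically. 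The key structural fact is an insertion lemma: for any vertex $w\notin V(P)$, setting $p=\max\{i:v_iw\in A(T)\}$, the vertex $w$ beats every $v_i$ with $i>p$, so (when $p<d$) we have $v_p\to w\to v_{p+1}$; that is, $w$ lies in the ``slot'' between two consecutive backbone vertices.

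Next I would colour the backbone economically: give the internal vertices $v_1,\dots,v_{d-1}$ the distinct colours $1,\dots,d-1$, and give $v_0$ and $v_d$ two further colours $d$ and $d+1$. These last two must differ from each other and from $1,\dots,d-1$, because both $v_0$ and $v_d$ occur as internal vertices of any cyclic traversal of $P$. With this colouring alone, I claim that any ordered pair $x,y$ for which $x$ beats some $v_a$ and $y$ is beaten by some $v_b$ can already be joined by a vertex-rainbow path: use $x\to v_a\to\cdots\to v_b\to y$ if $a\le b$, and otherwise wrap around via $x\to v_a\to\cdots\to v_d\to v_0\to\cdots\to v_b\to y$, exploiting the back-arc. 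In either case the internal vertices are distinct backbone vertices, so the path is vertex-rainbow; in particular, off-backbone vertices are never needed as internal vertices here, and their colours are irrelevant for such pairs. Every backbone vertex is of this generic type (as a tail it beats its successor, as a head it is beaten by its predecessor), so only off-backbone vertices can fail the hypothesis.

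The remaining, and main, difficulty is the two exceptional vertex types: a vertex that loses to every $v_i$ (hence beats no backbone vertex and cannot enter $P$ directly) when it is the tail of the pair, and a vertex that beats every $v_i$ (hence is beaten by no backbone vertex and cannot be reached from $P$ directly) when it is the head of the pair. I would reserve the two extra colours $d+2$ and $d+3$ for these vertices, and use strong connectivity to argue that each such vertex is adjacent, in the appropriate direction, to an off-backbone vertex that is itself non-exceptional and can thus be attached to the backbone routing; one then checks that inserting that single neighbour, coloured $d+2$ or $d+3$, at the start or end of a generic path repeats no colour. Carrying out this insertion uniformly --- including the boundary slots $p=0$ and $p=d$ and the case where a tail-exceptional $x$ meets a head-exceptional $y$ --- is where the case analysis concentrates, and is precisely what pushes the bound from the $d+1$ of the generic case up to $d+3$. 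Assembling all cases produces a vertex-rainbow path for every ordered pair, giving a rainbow vertex-connected colouring with $d+3$ colours and completing the proof.
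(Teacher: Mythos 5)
Your lower bound is fine, and so is the generic part of your upper-bound scheme: colouring the backbone $v_0,\dots,v_d$ with $d+1$ distinct colours and routing any pair $(x,y)$ in which $x$ beats some $v_a$ and some $v_b$ beats $y$ along a (possibly wrap-around) segment of the backbone is correct. The gap is in your treatment of the exceptional vertices, and it is fatal as written. You claim that strong connectivity makes every tail-exceptional vertex (one that loses to every $v_i$) adjacent to an off-backbone vertex that is itself non-exceptional, so that a single reserved-colour insertion suffices. This is false: strong connectivity only guarantees that the set of tail-exceptional vertices can eventually be left, not that it can be left in one step. Concretely, let $V(T)=\{v_0,\dots,v_4,x,z_1,z_2\}$ with arcs $v_iv_{i+1}$ for $0\le i\le 3$, $v_jv_i$ whenever $j-i\ge 2$, $v_ix$ and $v_iz_1$ for all $i$, $z_2v_i$ for $0\le i\le 3$, $v_4z_2$, and $xz_1$, $z_1z_2$, $z_2x$. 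One checks that this is a strongly connected tournament of diameter $4$, that $P=v_0v_1v_2v_3v_4$ is a diametral geodesic (the out-neighbours of $v_0$ are only $v_1,x,z_1$, and four steps are needed to reach $v_4$), that $x$ and $z_1$ both lose to every $v_i$, and that $\Gamma^+(x)=\{z_1\}$. So the single-neighbour insertion you describe does not exist for $x$: every path from $x$ into the backbone passes through the two off-backbone vertices $z_1,z_2$. Such chains are what your sketch misses. One can in fact bound their depth by $2$ using the geodesic property of $P$ (a chain of depth $3$ would force an exit arc into some $v_j$ with $j\ge 4$ to respect the diameter, and then $v_0$, which beats every chain vertex, would reach $v_j$ in three steps, shortcutting $P$); but that is a different argument from the one you give, and once two insertions may be needed on the source side and, symmetrically, two on the target side, your two reserved colours $d+2,d+3$ no longer cover a pair consisting of a tail-exceptional source and a head-exceptional target without an additional argument about reusing backbone colours.

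This difficulty is precisely what the actual proof (in \cite{LLLS2017}; the same construction is reproduced in the proof of Theorem \ref{tournamentthm3} of this paper) is built to avoid, via a different decomposition. Instead of a diametral path, fix a vertex $a$ of eccentricity $d$ and let $V_i$ be the set of vertices at distance $i$ from $a$. Then \emph{every} vertex of $T$ lies in some layer $V_i$, and every vertex in $V_i$ with $i\ge 2$ has an arc directly to $a$; so the situation ``cannot enter the backbone'' never occurs --- every vertex outside $V_1\cup\{a\}$ reaches the hub $a$ in one step, and $a$ reaches every vertex along layered geodesics whose vertex colours (essentially the layer indices) are automatically distinct. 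The only delicate cases are sources in $V_1$ and targets in $V_{d-1}\cup V_d$, which are handled by choosing $p\in V_1$ of maximum in-degree in $T[V_1]$ and $q\in V_d$ of maximum out-degree in $T[V_d]$, giving two-step access within those layers; this is what the three extra colours pay for. To salvage your backbone approach you would need, at minimum, the depth-two exit/entry lemma sketched above together with a consistent scheme letting insertion vertices reuse backbone colours; as written, the proof does not go through.
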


Here, we shall prove the following analogue for the total rainbow connection number.

\begin{thm}\label{tournamentthm3}
Let $T$ be a tournament of diameter $d$. 
\begin{enumerate}
\item[(a)] If $d=2$, then $3\le\overset{\rightarrow}{\smash{t}rc}(T)\le 5$.
\item[(b)] If $d\ge 3$, then $2d-1\le\overset{\rightarrow}{\smash{t}rc}(T)\le 2d+7$.
\end{enumerate}
\end{thm}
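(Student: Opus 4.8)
The plan is to read off the lower bounds from the basic inequalities and to obtain the upper bounds by overlaying a cheap rainbow connected arc-colouring and a cheap rainbow vertex-connected vertex-colouring on disjoint palettes. For the lower bounds, note that a tournament on $n\ge 3$ vertices with diameter $d\ge 2$ has no symmetric arcs, so it is not a bioriented complete graph; hence Theorem~\ref{pro2}(b) gives $\overset{\rightarrow}{\smash{t}rc}(T)\ge 3$, which is exactly what is needed when $d=2$. For $d\ge 3$ the left inequality of (\ref{eq}) gives $\overset{\rightarrow}{\smash{t}rc}(T)\ge 2\,\textup{diam}(T)-1=2d-1$ immediately.

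For the upper bound when $d\ge 3$, the main device is to colour the arcs of $T$ from one palette and the vertices from a second, disjoint palette. With disjoint palettes a directed path is total-rainbow exactly when its arcs are pairwise distinctly coloured \emph{and} its internal vertices are pairwise distinctly coloured. It therefore suffices to produce, for each ordered pair $(x,y)$, a \emph{single} $x$--$y$ path that is simultaneously rainbow (as an arc-coloured path) and vertex-rainbow (as a vertex-coloured path). For the arcs I would take the rainbow connected colouring of Theorem~\ref{Dorbecetaltourthm3}, using at most $d+2$ colours; for the vertices, the rainbow vertex-connected colouring of Theorem~\ref{LLLStourthm3}, using at most $d+3$ colours. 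On disjoint palettes these contribute at most $(d+2)+(d+3)=2d+5$ colours, leaving a budget of two further colours up to the claimed $2d+7$.

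The case $d=2$ is handled separately and more cheaply. Here every geodesic has length at most $2$, hence carries at most one internal vertex, so the vertex-rainbow condition is almost free; the difficulty is only that a rainbow path between a distance-$2$ pair need not be a geodesic (indeed $\overset{\rightarrow}{src}(T)$ is not bounded in terms of the diameter), so simply adding one vertex colour to a $4$-colour rainbow connected arc-colouring would require several vertex colours on the occasional long rainbow path. I would instead give a direct construction with $5$ colours, split between a few arc colours and a couple of vertex colours, in which every non-adjacent ordered pair is joined by a total-rainbow path of length $2$ or $3$: pairs admitting a length-$2$ path with two differently coloured arcs are dealt with at once, and the remaining pairs are routed along a length-$3$ detour, which is available in abundance since $\textup{diam}(T)=2$.

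The crux, and the source of the two spare colours, is the demand that one and the same path be rainbow in arcs and vertex-rainbow in internal vertices; the two cited theorems guarantee a rainbow path and a vertex-rainbow path separately, but these need not coincide, and this reconciliation is where I expect the real work to lie. My plan is to revisit the proofs of Theorems~\ref{Dorbecetaltourthm3} and~\ref{LLLStourthm3}, both of which route between vertices along geodesics built from a breadth-first structure rooted at a pair realising the diameter (as in the arguments underlying Theorem~\ref{tournamentthm1}), and to select a single geodesic compatible with both colourings. The two leftover colours are then spent on the boundary case $d=3$, where, just as in fact~(iii) in the proof of Theorem~\ref{tournamentthm1}, one can only guarantee the existence of a good geodesic rather than the goodness of every geodesic; this should yield a total-rainbow path for each ordered pair and hence $\overset{\rightarrow}{\smash{t}rc}(T)\le 2d+7$.
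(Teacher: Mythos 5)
Your lower bounds are fine, and your overall strategy for (b) --- disjoint arc and vertex palettes built on the level decomposition $V_1,\dots,V_d$ from a vertex $a$ of eccentricity $d$ --- is indeed the skeleton of the paper's proof. But the proposal has a genuine gap at exactly the point you yourself flag as ``where the real work lies'': the reconciliation is never carried out, and the way you propose to carry it out would fail. Overlaying the colourings behind Theorems \ref{Dorbecetaltourthm3} and \ref{LLLStourthm3} verbatim does not work, for concrete reasons: in the arc-colouring, all backward arcs (from $V_i$ to $V_j$, $j<i$) share a colour with the within-$V_d$ arcs, and in the vertex-colouring the vertices of $V_{d-2}$ share a colour with those of $V_d\setminus\{q\}$. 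The common witnessing paths --- which, note, are \emph{not} geodesics: for $x\in V_1$ and $y\in V_d$ they run from $x$ to $p$ inside $V_1$, along a $p$--$q$ geodesic $P$, then inside $V_d$, and most other pairs are routed through $a$ --- therefore pick up repeated colours under the naive overlay. The paper resolves this not by ``selecting a compatible geodesic'' but by jointly modifying both colourings: it recolours the vertex $r$ of $P$ in $V_{d-2}$ (and possibly a second vertex $s$) with new vertex colours $d+2,d+3$, and recolours the out-arc of $r$ and the within-level arc of $P$ with new arc colours $\underline{d+3},\underline{d+4}$. This also shows your accounting is off: the two colours beyond $(d+2)+(d+3)$ are not spent on a boundary case $d=3$ (which plays no special role here, unlike fact (iii) in the proof of Theorem \ref{tournamentthm1}); they are spent, for every $d\ge 3$, as extra \emph{arc} colours along the $p$--$q$ spine, on top of a vertex palette that already has size $d+3$.

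Part (a) has the same character: what you give is the shape of a construction (``a few arc colours and a couple of vertex colours'', length-$3$ detours ``available in abundance''), not a construction. The paper's colouring is explicit: fix $a$, set $V_1=\Gamma^+(a)$ and $V_2=\Gamma^-(a)$; give colour $1$ to out-arcs of $a$ and to arcs inside $V_1$ or inside $V_2$, colour $2$ to arcs between $V_1$ and $V_2$, colour $3$ to arcs into $a$, colour $4$ to $a$, and colour $5$ to every other vertex; one then checks, pair by pair, that a total-rainbow path of length at most $3$ exists, the key observation being that every $u\in V_1$ has an out-neighbour in $V_2$ (since $u$ must reach $a$ in two steps). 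Without this, or an equally explicit colouring and case analysis, part (a) is unproved. In short: you have the right framework and you correctly located the crux, but the two constructions that constitute the actual content of the theorem are missing, and the proposed shortcut for (b) (reuse the two cited colourings unchanged and pick a good path) breaks on the clashes described above.
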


\begin{proof}
Clearly for both parts, the lower bound follows from (\ref{eq}). We prove the upper bounds.\\[1ex]
\indent(a) Let $d=2$. Let $a\in V(T)$, and $V_1= \Gamma^+(a)$, $V_2=\Gamma^-(a)$. Note that $V_1,V_2\neq\emptyset$; that $V_2$ are the vertices at distance $2$ from $a$; and that $ua\in A(T)$ for all $u\in V_2$. Also, note that for every vertex $u\in V_1$, there must be an out-neighbour of $u$ in $V_2$, since there must exist a $u-a$ path in $T$ with length $2$.

We define a total-colouring $c$ of $T$ as follows. Let $c(uv)=1$ if either $u=a$, or $uv$ is an arc in $V_1$ or $V_2$; $c(uv)=2$ if one of $u,v$ is in $V_1$ and the other is in $V_2$; $c(ua)=3$ for all $u\in V_2$; $c(a)=4$; and $c(u)=5$ for all $u\in V(T)\setminus\{a\}$. We claim that $c$ is a total rainbow connected colouring for $T$. Let $x,y\in V(T)$, and suppose that $xy\not\in A(T)$. We show that a total-rainbow $x-y$ path $P$ always exists. Clearly this is true if $x=a$. Next, let $x\in V_1$, and let $z\in V_2$ be an out-neighbour of $x$. If $y=a$ then we take $P=xza$, and if $y\in V_1$, then we take $P=xzay$. If $y\in V_2$, then since $d=2$, there must exist a suitable path $P=xwy$ for some $w\in V_1\cup V_2$. Finally, let $x\in V_2$. If $y\in V_1$ then we take $P=xay$, and if $y\in V_2$, then we can take $P=xawy$ for some $w\in V_1$. Thus the claim holds, and we have $\overset{\rightarrow}{\smash{t}rc}(T)\le 5$.\\[1ex]
\indent(b) Let $d\ge 3$. We prove the upper bound by constructing a total-colouring $c$ of $T$, using colours $\underline{1},\underline{2},\dots,\underline{d\,\smash{+}\,4}$ for the arcs, and $1,2,\dots,d+3$ for the vertices. As in the proofs of Theorems \ref{Dorbecetaltourthm3} and \ref{LLLStourthm3} (see \cite{PIE2014}, Theorem 24; and \cite{LLLS2017}, Theorem 23), we consider the following decomposition of $T$. Let $a\in V(T)$ be a vertex with eccentricity $d$, i.e., there exists a vertex of $T$ at distance $d$ from $a$. For $1\le i\le d$, let $V_i$ denote the set of vertices at distance $i$ from $a$. Then, every $V_i$ is non-empty. Note that $ua\in A(T)$ for every $u\in V_i$ with $2\le i\le d$, and $uv\in A(T)$ whenever $u\in V_i$, $v\in V_j$, and $i-j\ge 2$. Let $p\in V_1$ have maximum in-degree in $T[V_1]$, and $q\in V_d$ have maximum out-degree in $T[V_d]$. Now, we define the total-colouring $c$ as follows.
\begin{itemize}
\item $c(av)=\underline{1}$ if $v\in V_1$, and $c(uv)=\underline{i}$ if $u\in V_{i-1}$ and $v\in V_i$, for $2\le i\le d$.
\item $c(uv)=\underline{d\,\smash{+}\,1}$ if $u\in V_i$ and $v\in V_j$, where $1\le j<i\le d$.
\item $c(vp)=\underline{1}$ if $v\in V_1$, and $c(uv)=\underline{d\,\smash{+}\,2}$ for every other arc $uv$ in $V_1,V_2,\dots,V_{d-1}$.
\item $c(qv)=\underline{d\,\smash{-}\,1}$ if $v\in V_d$, and $c(uv)=\underline{d\,\smash{+}\,1}$ for every other arc $uv$ in $V_d$.
\item $c(v)=i$ if $v\in V_i$, for $2\le i\le d-1$.
\item $c(p)=d$, and $c(v)=1$ for $v\in V_1\setminus\{p\}$.
\item $c(a)=c(q)=d+1$, and $c(v)=d-2$ for $v\in V_d\setminus\{q\}$.
\end{itemize}
Next, we update $c$ by considering a $p-q$ geodesic $P$. Note that we have $|A(P)|=d-1$ or $|A(P)|=d$. Thus, we consider these two cases.
\begin{enumerate}
\item[(i)] If $|A(P)|=d-1$, then $P$ must contain one arc from $V_i$ to $V_{i+1}$, for every $1\le i\le d-1$. Let $r$ be the vertex of $P$ in $V_{d-2}$. We recolour $r$ by letting $c(r)=d+2$, and the out-arc of $r$ in $P$ with colour $\underline{d\,\smash{+}\,3}$.
\item[(ii)] If $|A(P)|=d$, then $P$ must contain one arc from $V_i$ to $V_{i+1}$, for every $1\le i\le d-1$, and one arc from $V_k$ to $V_k$, for some $1\le k\le d$. If $k\neq d-2$, then let $r$ be the vertex of $P$ in $V_{d-2}$, and $st$ be the arc of $P$ in $V_k$. If $k=d-2$, then let $sr$ be the arc of $P$ in $V_{d-2}$. In either case, we recolour $r$ and $s$ by letting $c(r)=d+2$ and $c(s)=d+3$, the out-arc of $r$ in $P$ with colour $\underline{d\,\smash{+}\,3}$, and the arc in $V_k$ (either $st$ or $sr$) with colour $\underline{d\,\smash{+}\,4}$.
\end{enumerate}
The situation in (ii) is shown in Figure 4.\\[1ex]
\[ \unit = 0.65cm
\color{lightgray}{
\varline{500}{0.6}
\dl{-7.3}{0}{-6.7}{0}\dl{-7.3}{6}{-6.7}{6}\dl{-8}{0.7}{-8}{5.3}\dl{-6}{0.7}{-6}{5.3}\bez{-8}{0.7}{-8}{0}{-7.3}{0}\bez{-6}{0.7}{-6}{0}{-6.7}{0}\bez{-8}{5.3}{-8}{6}{-7.3}{6}\bez{-6}{5.3}{-6}{6}{-6.7}{6}
\dl{-4.7}{0}{-4.1}{0}\dl{-4.7}{6}{-4.1}{6}\dl{-5.4}{0.7}{-5.4}{5.3}\dl{-3.4}{0.7}{-3.4}{5.3}\bez{-5.4}{0.7}{-5.4}{0}{-4.7}{0}\bez{-3.4}{0.7}{-3.4}{0}{-4.1}{0}\bez{-5.4}{5.3}{-5.4}{6}{-4.7}{6}\bez{-3.4}{5.3}{-3.4}{6}{-4.1}{6}
\dl{-0.3}{0}{0.3}{0}\dl{-0.3}{6}{0.3}{6}\dl{-1}{0.7}{-1}{5.3}\dl{1}{0.7}{1}{5.3}\bez{-1}{0.7}{-1}{0}{-0.3}{0}\bez{1}{0.7}{1}{0}{0.3}{0}\bez{-1}{5.3}{-1}{6}{-0.3}{6}\bez{1}{5.3}{1}{6}{0.3}{6}
\dl{4.7}{0}{4.1}{0}\dl{4.7}{6}{4.1}{6}\dl{5.4}{0.7}{5.4}{5.3}\dl{3.4}{0.7}{3.4}{5.3}\bez{5.4}{0.7}{5.4}{0}{4.7}{0}\bez{3.4}{0.7}{3.4}{0}{4.1}{0}\bez{5.4}{5.3}{5.4}{6}{4.7}{6}\bez{3.4}{5.3}{3.4}{6}{4.1}{6}
\dl{7.3}{0}{6.7}{0}\dl{7.3}{6}{6.7}{6}\dl{8}{0.7}{8}{5.3}\dl{6}{0.7}{6}{5.3}\bez{8}{0.7}{8}{0}{7.3}{0}\bez{6}{0.7}{6}{0}{6.7}{0}\bez{8}{5.3}{8}{6}{7.3}{6}\bez{6}{5.3}{6}{6}{6.7}{6}
\dl{9.9}{0}{9.3}{0}\dl{9.9}{6}{9.3}{6}\dl{10.6}{0.7}{10.6}{5.3}\dl{8.6}{0.7}{8.6}{5.3}\bez{10.6}{0.7}{10.6}{0}{9.9}{0}\bez{8.6}{0.7}{8.6}{0}{9.3}{0}\bez{10.6}{5.3}{10.6}{6}{9.9}{6}\bez{8.6}{5.3}{8.6}{6}{9.3}{6}
}
\color{black}{
\point{-7.3}{6.2}{\small $V_1$}\point{-4.7}{6.2}{\small $V_2$}\point{-0.3}{6.2}{\small $V_k$}\point{3.82}{6.2}{\small $V_{d-2}$}\point{6.42}{6.2}{\small $V_{d-1}$}\point{9.3}{6.2}{\small $V_d$}
\point{-2.53}{3.96}{\small $\dots$}\point{1.86}{3.96}{\small $\dots$}
\pt{-9.6}{3}
\pt{-7}{1}\pt{-7}{4}\pt{-7}{5}
\pt{-4.4}{1}\pt{-4.4}{4}\pt{-4.4}{5}
\pt{0}{1}\pt{0}{2}\pt{0}{4}\pt{0}{5}
\pt{4.4}{2}\pt{4.4}{4}\pt{4.4}{5}
\pt{7}{2}\pt{7}{4}\pt{7}{5}
\pt{9.6}{2}\pt{9.6}{4}\pt{9.6}{5}
\point{-10.18}{2.88}{\footnotesize $a$}\ellipse{-10.05}{3}{0.27}{0.27}
\point{-7.13}{0.48}{\footnotesize $p$}\ellipse{-7}{0.55}{0.27}{0.27}
\point{-0.32}{0.44}{\footnotesize $s$}\ellipse{-0.2}{0.55}{0.27}{0.27}
\point{0.11}{2.29}{\footnotesize $t$}\ellipse{0.2}{2.45}{0.27}{0.27}
\point{4.28}{1.43}{\footnotesize $r$}\ellipse{4.4}{1.55}{0.27}{0.27}
\point{9.97}{1.93}{\footnotesize $q$}\ellipse{10.1}{2}{0.27}{0.27}
\point{1.93}{1.36}{\footnotesize $P$}
\point{-8.46}{7.1}{\tiny\underline{$d\smash{+}1$}}\point{-8.6}{4.17}{\tiny\underline{1}}\point{-8.6}{1.65}{\tiny\underline{1}}\point{-8.35}{3.1}{\tiny\underline{1}}\point{-7.8}{2.68}{\tiny\underline{$1$}}\point{-6.86}{2.3}{\tiny\underline{$d\smash{+}2$}}\point{-6.86}{4.6}{\tiny\underline{$d\smash{+}2$}}\point{-5.5}{4.3}{\tiny\underline{$d\smash{+}1$}}\point{-6.08}{3.6}{\tiny\underline{$d\smash{+}1$}}\point{-5.82}{5.23}{\tiny\underline{2}}\point{-5.82}{2.8}{\tiny\underline{2}}\point{-5.82}{0.6}{\tiny\underline{2}}\point{-4.26}{4.4}{\tiny\underline{$d\smash{+}2$}}\point{-3.15}{2.31}{\tiny\underline{$d\smash{+}1$}}\point{0.13}{1.4}{\tiny\underline{$d\smash{+}4$}}\point{0.13}{4.4}{\tiny\underline{$d\smash{+}2$}}\point{-0.39}{-1.42}{\tiny\underline{$d\smash{+}1$}}\point{0.92}{7.25}{\tiny\underline{$d\smash{+}1$}}\point{6.62}{7.25}{\tiny\underline{$d\smash{+}1$}}\point{5.3}{5.23}{\tiny\underline{$d\smash{-}1$}}\point{5.3}{3.6}{\tiny\underline{$d\smash{+}1$}}\point{5.3}{2.23}{\tiny\underline{$d\smash{+}3$}}\point{4.6}{4.35}{\tiny\underline{$d\smash{-}1$}}\point{6.08}{4.6}{\tiny\underline{$d\smash{+}2$}}\point{8.7}{4.6}{\tiny\underline{$d\smash{+}1$}}\point{8.16}{4.12}{\tiny\underline{$d$}}\point{8.16}{2.23}{\tiny\underline{$d$}}\point{8.16}{5.23}{\tiny\underline{$d\smash{+}1$}}\point{8.7}{2.9}{\tiny\underline{$d\smash{+}1$}}\point{10.27}{3.4}{\tiny\underline{$d\smash{-}1$}}
\point{-9.35}{2.85}{\tiny $d\smash{+}1$}\point{-7.12}{5.17}{\tiny $1$}\point{-6.95}{3.45}{\tiny $1$}\point{-6.95}{1.2}{\tiny $d$}\point{-4.45}{1.16}{\tiny $2$}\point{-4.52}{3.62}{\tiny $2$}\point{-4.52}{5.17}{\tiny $2$}\point{0.13}{0.9}{\tiny $d\smash{+}3$}\point{-0.4}{1.9}{\tiny $k$}\point{-0.12}{3.62}{\tiny $k$}\point{-0.12}{5.17}{\tiny $k$}\point{4.01}{3.62}{\tiny $d\smash{-}2$}\point{3.66}{5.17}{\tiny $d\smash{-}2$}\point{4.01}{2.2}{\tiny $d\smash{+}2$}\point{6.61}{2.2}{\tiny $d\smash{-}1$}\point{6.61}{3.62}{\tiny $d\smash{-}1$}\point{7.02}{5.17}{\tiny $d\smash{-}1$}\point{9.62}{5.17}{\tiny $d\smash{-}2$}\point{8.73}{3.73}{\tiny $d\smash{-}2$}\point{8.73}{2.2}{\tiny $d\smash{+}1$}
\dl{-5.6}{-1}{6.6}{-1}\bez{-9.6}{3}{-9.6}{-1}{-5.6}{-1}\bez{9.6}{2}{9.6}{-1}{6.6}{-1}
\bez{-9.6}{3}{-9.2}{9.7}{-4.4}{5}
\dl{-0.7}{7}{3.3}{7}\bez{7}{5}{5.3}{7}{3.3}{7}\bez{-4.4}{5}{-2.7}{7}{-0.7}{7}
\dl{-9.6}{3}{-7}{1}\dl{-9.6}{3}{-7}{4}\dl{-9.6}{3}{-7}{5}\dl{-7}{1}{-7}{5}\dl{-7}{5}{-4.4}{5}\dl{-7}{4}{-4.4}{4}\dl{-7}{4}{-4.4}{5}\dl{-4.4}{4}{-4.4}{5}\dl{-7}{4}{-4.4}{1}\dl{-7}{1}{0}{4}\dl{0}{4}{0}{5}\dl{7}{4}{7}{5}\dl{7}{5}{9.6}{5}\dl{7}{5}{9.6}{4}\dl{7}{4}{4.4}{5}\dl{4.4}{5}{7}{5}\dl{4.4}{4}{7}{4}\bez{4.4}{5}{7}{9}{9.6}{5}
\bez{-7}{1}{-8}{3}{-7}{5}\dl{9.6}{2}{9.6}{5}\bez{9.6}{2}{10.6}{3.5}{9.6}{5}
\varline{850}{3.8}
\dl{-7}{1}{0}{1}\dl{0}{1}{0}{2}\dl{0}{2}{9.6}{2}
\thnline
\dl{-1.63}{1}{-1.89}{1.1}\dl{-1.67}{1}{-1.89}{1.1}\dl{-1.71}{1}{-1.89}{1.1}\dl{-1.75}{1}{-1.89}{1.1}\dl{-1.79}{1}{-1.89}{1.1}\dl{-1.83}{1}{-1.89}{1.1}
\dl{-1.63}{1}{-1.89}{0.9}\dl{-1.67}{1}{-1.89}{0.9}\dl{-1.71}{1}{-1.89}{0.9}\dl{-1.75}{1}{-1.89}{0.9}\dl{-1.79}{1}{-1.89}{0.9}\dl{-1.83}{1}{-1.89}{0.9}
\dl{-1.63}{1}{-1.83}{1}
\dl{-2.51}{1}{-2.77}{1.1}\dl{-2.55}{1}{-2.77}{1.1}\dl{-2.59}{1}{-2.77}{1.1}\dl{-2.63}{1}{-2.77}{1.1}\dl{-2.67}{1}{-2.77}{1.1}\dl{-2.71}{1}{-2.77}{1.1}
\dl{-2.51}{1}{-2.77}{0.9}\dl{-2.55}{1}{-2.77}{0.9}\dl{-2.59}{1}{-2.77}{0.9}\dl{-2.63}{1}{-2.77}{0.9}\dl{-2.67}{1}{-2.77}{0.9}\dl{-2.71}{1}{-2.77}{0.9}
\dl{-2.51}{1}{-2.71}{1}
\dl{-5.57}{1}{-5.83}{1.1}\dl{-5.61}{1}{-5.83}{1.1}\dl{-5.65}{1}{-5.83}{1.1}\dl{-5.69}{1}{-5.83}{1.1}\dl{-5.73}{1}{-5.83}{1.1}\dl{-5.77}{1}{-5.83}{1.1}
\dl{-5.57}{1}{-5.83}{0.9}\dl{-5.61}{1}{-5.83}{0.9}\dl{-5.65}{1}{-5.83}{0.9}\dl{-5.69}{1}{-5.83}{0.9}\dl{-5.73}{1}{-5.83}{0.9}\dl{-5.77}{1}{-5.83}{0.9}
\dl{-5.57}{1}{-5.77}{1}
\dl{1.89}{2}{1.63}{2.1}\dl{1.85}{2}{1.63}{2.1}\dl{1.81}{2}{1.63}{2.1}\dl{1.77}{2}{1.63}{2.1}\dl{1.73}{2}{1.63}{2.1}\dl{1.69}{2}{1.63}{2.1}
\dl{1.89}{2}{1.63}{1.9}\dl{1.85}{2}{1.63}{1.9}\dl{1.81}{2}{1.63}{1.9}\dl{1.77}{2}{1.63}{1.9}\dl{1.73}{2}{1.63}{1.9}\dl{1.69}{2}{1.63}{1.9}
\dl{1.89}{2}{1.69}{2}
\dl{2.77}{2}{2.51}{2.1}\dl{2.73}{2}{2.51}{2.1}\dl{2.69}{2}{2.51}{2.1}\dl{2.65}{2}{2.51}{2.1}\dl{2.61}{2}{2.51}{2.1}\dl{2.57}{2}{2.51}{2.1}
\dl{2.77}{2}{2.51}{1.9}\dl{2.73}{2}{2.51}{1.9}\dl{2.69}{2}{2.51}{1.9}\dl{2.65}{2}{2.51}{1.9}\dl{2.61}{2}{2.51}{1.9}\dl{2.57}{2}{2.51}{1.9}
\dl{2.77}{2}{2.57}{2}
\dl{5.83}{2}{5.57}{2.1}\dl{5.79}{2}{5.57}{2.1}\dl{5.75}{2}{5.57}{2.1}\dl{5.71}{2}{5.57}{2.1}\dl{5.67}{2}{5.57}{2.1}\dl{5.63}{2}{5.57}{2.1}
\dl{5.83}{2}{5.57}{1.9}\dl{5.79}{2}{5.57}{1.9}\dl{5.75}{2}{5.57}{1.9}\dl{5.71}{2}{5.57}{1.9}\dl{5.67}{2}{5.57}{1.9}\dl{5.63}{2}{5.57}{1.9}
\dl{5.83}{2}{5.63}{2}
\dl{8.43}{2}{8.17}{2.1}\dl{8.39}{2}{8.17}{2.1}\dl{8.35}{2}{8.17}{2.1}\dl{8.31}{2}{8.17}{2.1}\dl{8.27}{2}{8.17}{2.1}\dl{8.23}{2}{8.17}{2.1}
\dl{8.43}{2}{8.17}{1.9}\dl{8.39}{2}{8.17}{1.9}\dl{8.35}{2}{8.17}{1.9}\dl{8.31}{2}{8.17}{1.9}\dl{8.27}{2}{8.17}{1.9}\dl{8.23}{2}{8.17}{1.9}
\dl{8.43}{2}{8.23}{2}
\dl{-0.13}{-1}{0.13}{-0.9}\dl{-0.09}{-1}{0.13}{-0.9}\dl{-0.05}{-1}{0.13}{-0.9}\dl{-0.01}{-1}{0.13}{-0.9}\dl{0.03}{-1}{0.13}{-0.9}\dl{0.07}{-1}{0.13}{-0.9}
\dl{-0.13}{-1}{0.13}{-1.1}\dl{-0.09}{-1}{0.13}{-1.1}\dl{-0.05}{-1}{0.13}{-1.1}\dl{-0.01}{-1}{0.13}{-1.1}\dl{0.03}{-1}{0.13}{-1.1}\dl{0.07}{-1}{0.13}{-1.1}
\dl{1.17}{7}{1.43}{6.9}\dl{1.21}{7}{1.43}{6.9}\dl{1.25}{7}{1.43}{6.9}\dl{1.29}{7}{1.43}{6.9}\dl{1.33}{7}{1.43}{6.9}\dl{1.37}{7}{1.43}{6.9}
\dl{1.17}{7}{1.43}{7.1}\dl{1.21}{7}{1.43}{7.1}\dl{1.25}{7}{1.43}{7.1}\dl{1.29}{7}{1.43}{7.1}\dl{1.33}{7}{1.43}{7.1}\dl{1.37}{7}{1.43}{7.1}
\dl{6.87}{7}{7.13}{6.9}\dl{6.91}{7}{7.13}{6.9}\dl{6.95}{7}{7.13}{6.9}\dl{6.99}{7}{7.13}{6.9}\dl{7.03}{7}{7.13}{6.9}\dl{7.07}{7}{7.13}{6.9}
\dl{6.87}{7}{7.13}{7.1}\dl{6.91}{7}{7.13}{7.1}\dl{6.95}{7}{7.13}{7.1}\dl{6.99}{7}{7.13}{7.1}\dl{7.03}{7}{7.13}{7.1}\dl{7.07}{7}{7.13}{7.1}
\dl{0}{1.63}{0.1}{1.37}\dl{0}{1.56}{0.1}{1.37}\dl{0}{1.51}{0.1}{1.37}\dl{0}{1.47}{0.1}{1.37}\dl{0}{1.45}{0.1}{1.37}\dl{0}{1.43}{0.1}{1.37}
\dl{0}{1.63}{-0.1}{1.37}\dl{0}{1.56}{-0.1}{1.37}\dl{0}{1.51}{-0.1}{1.37}\dl{0}{1.47}{-0.1}{1.37}\dl{0}{1.45}{-0.1}{1.37}\dl{0}{1.43}{-0.1}{1.37}
\dl{0}{1.43}{0}{1.63}
\dl{7}{4.37}{7.1}{4.63}\dl{7}{4.44}{7.1}{4.63}\dl{7}{4.49}{7.1}{4.63}\dl{7}{4.53}{7.1}{4.63}\dl{7}{4.55}{7.1}{4.63}\dl{7}{4.57}{7.1}{4.63}
\dl{7}{4.37}{6.9}{4.63}\dl{7}{4.44}{6.9}{4.63}\dl{7}{4.49}{6.9}{4.63}\dl{7}{4.53}{6.9}{4.63}\dl{7}{4.55}{6.9}{4.63}\dl{7}{4.57}{6.9}{4.63}
\dl{9.6}{4.37}{9.7}{4.63}\dl{9.6}{4.44}{9.7}{4.63}\dl{9.6}{4.49}{9.7}{4.63}\dl{9.6}{4.53}{9.7}{4.63}\dl{9.6}{4.55}{9.7}{4.63}\dl{9.6}{4.57}{9.7}{4.63}
\dl{9.6}{4.37}{9.5}{4.63}\dl{9.6}{4.44}{9.5}{4.63}\dl{9.6}{4.49}{9.5}{4.63}\dl{9.6}{4.53}{9.5}{4.63}\dl{9.6}{4.55}{9.5}{4.63}\dl{9.6}{4.57}{9.5}{4.63}
\dl{9.6}{2.87}{9.7}{3.13}\dl{9.6}{2.94}{9.7}{3.13}\dl{9.6}{2.99}{9.7}{3.13}\dl{9.6}{3.03}{9.7}{3.13}\dl{9.6}{3.05}{9.7}{3.13}\dl{9.6}{3.07}{9.7}{3.13}
\dl{9.6}{2.87}{9.5}{3.13}\dl{9.6}{2.94}{9.5}{3.13}\dl{9.6}{2.99}{9.5}{3.13}\dl{9.6}{3.03}{9.5}{3.13}\dl{9.6}{3.05}{9.5}{3.13}\dl{9.6}{3.07}{9.5}{3.13}
\dl{10.1}{3.63}{10.2}{3.37}\dl{10.1}{3.56}{10.2}{3.37}\dl{10.1}{3.51}{10.2}{3.37}\dl{10.1}{3.47}{10.2}{3.37}\dl{10.1}{3.45}{10.2}{3.37}\dl{10.1}{3.43}{10.2}{3.37}
\dl{10.1}{3.63}{10}{3.37}\dl{10.1}{3.56}{10}{3.37}\dl{10.1}{3.51}{10}{3.37}\dl{10.1}{3.47}{10}{3.37}\dl{10.1}{3.45}{10}{3.37}\dl{10.1}{3.43}{10}{3.37}
\dl{0}{4.37}{0.1}{4.63}\dl{0}{4.44}{0.1}{4.63}\dl{0}{4.49}{0.1}{4.63}\dl{0}{4.53}{0.1}{4.63}\dl{0}{4.55}{0.1}{4.63}\dl{0}{4.57}{0.1}{4.63}
\dl{0}{4.37}{-0.1}{4.63}\dl{0}{4.44}{-0.1}{4.63}\dl{0}{4.49}{-0.1}{4.63}\dl{0}{4.53}{-0.1}{4.63}\dl{0}{4.55}{-0.1}{4.63}\dl{0}{4.57}{-0.1}{4.63}
\dl{-7.5}{2.87}{-7.6}{3.13}\dl{-7.5}{2.94}{-7.6}{3.13}\dl{-7.5}{2.99}{-7.6}{3.13}\dl{-7.5}{3.03}{-7.6}{3.13}\dl{-7.5}{3.05}{-7.6}{3.13}\dl{-7.5}{3.07}{-7.6}{3.13}
\dl{-7.5}{2.87}{-7.4}{3.13}\dl{-7.5}{2.94}{-7.4}{3.13}\dl{-7.5}{2.99}{-7.4}{3.13}\dl{-7.5}{3.03}{-7.4}{3.13}\dl{-7.5}{3.05}{-7.4}{3.13}\dl{-7.5}{3.07}{-7.4}{3.13}
\dl{-4.4}{4.63}{-4.3}{4.37}\dl{-4.4}{4.56}{-4.3}{4.37}\dl{-4.4}{4.51}{-4.3}{4.37}\dl{-4.4}{4.47}{-4.3}{4.37}\dl{-4.4}{4.45}{-4.3}{4.37}\dl{-4.4}{4.43}{-4.3}{4.37}
\dl{-4.4}{4.63}{-4.5}{4.37}\dl{-4.4}{4.56}{-4.5}{4.37}\dl{-4.4}{4.51}{-4.5}{4.37}\dl{-4.4}{4.47}{-4.5}{4.37}\dl{-4.4}{4.45}{-4.5}{4.37}\dl{-4.4}{4.43}{-4.5}{4.37}
\dl{-7}{4.63}{-6.9}{4.37}\dl{-7}{4.56}{-6.9}{4.37}\dl{-7}{4.51}{-6.9}{4.37}\dl{-7}{4.47}{-6.9}{4.37}\dl{-7}{4.45}{-6.9}{4.37}\dl{-7}{4.43}{-6.9}{4.37}
\dl{-7}{4.63}{-7.1}{4.37}\dl{-7}{4.56}{-7.1}{4.37}\dl{-7}{4.51}{-7.1}{4.37}\dl{-7}{4.47}{-7.1}{4.37}\dl{-7}{4.45}{-7.1}{4.37}\dl{-7}{4.43}{-7.1}{4.37}
\dl{-7}{2.63}{-6.9}{2.37}\dl{-7}{2.56}{-6.9}{2.37}\dl{-7}{2.51}{-6.9}{2.37}\dl{-7}{2.47}{-6.9}{2.37}\dl{-7}{2.45}{-6.9}{2.37}\dl{-7}{2.43}{-6.9}{2.37}
\dl{-7}{2.63}{-7.1}{2.37}\dl{-7}{2.56}{-7.1}{2.37}\dl{-7}{2.51}{-7.1}{2.37}\dl{-7}{2.47}{-7.1}{2.37}\dl{-7}{2.45}{-7.1}{2.37}\dl{-7}{2.43}{-7.1}{2.37}
\dl{-5.57}{5}{-5.83}{5.1}\dl{-5.61}{5}{-5.83}{5.1}\dl{-5.65}{5}{-5.83}{5.1}\dl{-5.69}{5}{-5.83}{5.1}\dl{-5.73}{5}{-5.83}{5.1}\dl{-5.77}{5}{-5.83}{5.1}
\dl{-5.57}{5}{-5.83}{4.9}\dl{-5.61}{5}{-5.83}{4.9}\dl{-5.65}{5}{-5.83}{4.9}\dl{-5.69}{5}{-5.83}{4.9}\dl{-5.73}{5}{-5.83}{4.9}\dl{-5.77}{5}{-5.83}{4.9}
\dl{-5.83}{4}{-5.57}{3.9}\dl{-5.79}{4}{-5.57}{3.9}\dl{-5.75}{4}{-5.57}{3.9}\dl{-5.71}{4}{-5.57}{3.9}\dl{-5.67}{4}{-5.57}{3.9}\dl{-5.63}{4}{-5.57}{3.9}
\dl{-5.83}{4}{-5.57}{4.1}\dl{-5.79}{4}{-5.57}{4.1}\dl{-5.75}{4}{-5.57}{4.1}\dl{-5.71}{4}{-5.57}{4.1}\dl{-5.67}{4}{-5.57}{4.1}\dl{-5.63}{4}{-5.57}{4.1}
\dl{5.57}{4}{5.83}{3.9}\dl{5.61}{4}{5.83}{3.9}\dl{5.65}{4}{5.83}{3.9}\dl{5.69}{4}{5.83}{3.9}\dl{5.73}{4}{5.83}{3.9}\dl{5.77}{4}{5.83}{3.9}
\dl{5.57}{4}{5.83}{4.1}\dl{5.61}{4}{5.83}{4.1}\dl{5.65}{4}{5.83}{4.1}\dl{5.69}{4}{5.83}{4.1}\dl{5.73}{4}{5.83}{4.1}\dl{5.77}{4}{5.83}{4.1}
\dl{8.17}{5}{8.43}{4.9}\dl{8.21}{5}{8.43}{4.9}\dl{8.25}{5}{8.43}{4.9}\dl{8.29}{5}{8.43}{4.9}\dl{8.33}{5}{8.43}{4.9}\dl{8.37}{5}{8.43}{4.9}
\dl{8.17}{5}{8.43}{5.1}\dl{8.21}{5}{8.43}{5.1}\dl{8.25}{5}{8.43}{5.1}\dl{8.29}{5}{8.43}{5.1}\dl{8.33}{5}{8.43}{5.1}\dl{8.37}{5}{8.43}{5.1}
\dl{5.83}{5}{5.57}{5.1}\dl{5.79}{5}{5.57}{5.1}\dl{5.75}{5}{5.57}{5.1}\dl{5.71}{5}{5.57}{5.1}\dl{5.67}{5}{5.57}{5.1}\dl{5.63}{5}{5.57}{5.1}
\dl{5.83}{5}{5.57}{4.9}\dl{5.79}{5}{5.57}{4.9}\dl{5.75}{5}{5.57}{4.9}\dl{5.71}{5}{5.57}{4.9}\dl{5.67}{5}{5.57}{4.9}\dl{5.63}{5}{5.57}{4.9}
\dl{-8.2}{4.08}{-8.46}{4}\dl{-8.26}{4.04}{-8.46}{4}\dl{-8.3}{4.02}{-8.46}{4}\dl{-8.32}{4}{-8.46}{4}\dl{-8.34}{3.98}{-8.46}{4}\dl{-8.36}{3.96}{-8.46}{4}
\dl{-8.2}{4.08}{-8.34}{3.86}\dl{-8.26}{4.04}{-8.34}{3.86}\dl{-8.3}{4.02}{-8.34}{3.86}\dl{-8.32}{4}{-8.34}{3.86}\dl{-8.34}{3.98}{-8.34}{3.86}\dl{-8.36}{3.96}{-8.34}{3.86}
\dl{-8.2}{1.92}{-8.46}{2}\dl{-8.26}{1.96}{-8.46}{2}\dl{-8.3}{1.98}{-8.46}{2}\dl{-8.32}{2}{-8.46}{2}\dl{-8.34}{2.02}{-8.46}{2}\dl{-8.36}{2.04}{-8.46}{2}
\dl{-8.2}{1.92}{-8.34}{2.14}\dl{-8.26}{1.96}{-8.34}{2.14}\dl{-8.3}{1.98}{-8.34}{2.14}\dl{-8.32}{2}{-8.34}{2.14}\dl{-8.34}{2.02}{-8.34}{2.14}\dl{-8.36}{2.04}{-8.34}{2.14}
\dl{-8.18}{3.54}{-8.43}{3.53}\dl{-8.24}{3.52}{-8.43}{3.53}\dl{-8.28}{3.5}{-8.43}{3.53}\dl{-8.31}{3.49}{-8.43}{3.53}\dl{-8.34}{3.48}{-8.43}{3.53}\dl{-8.37}{3.47}{-8.43}{3.53}
\dl{-8.18}{3.54}{-8.37}{3.38}\dl{-8.24}{3.52}{-8.37}{3.38}\dl{-8.28}{3.5}{-8.37}{3.38}\dl{-8.31}{3.49}{-8.37}{3.38}\dl{-8.34}{3.48}{-8.37}{3.38}\dl{-8.37}{3.47}{-8.37}{3.38}
\dl{-5.82}{4.46}{-5.57}{4.47}\dl{-5.76}{4.48}{-5.57}{4.47}\dl{-5.72}{4.5}{-5.57}{4.47}\dl{-5.69}{4.51}{-5.57}{4.47}\dl{-5.66}{4.52}{-5.57}{4.47}\dl{-5.63}{4.53}{-5.57}{4.47}
\dl{-5.82}{4.46}{-5.63}{4.62}\dl{-5.76}{4.48}{-5.63}{4.62}\dl{-5.72}{4.5}{-5.63}{4.62}\dl{-5.69}{4.51}{-5.63}{4.62}\dl{-5.66}{4.52}{-5.63}{4.62}\dl{-5.63}{4.53}{-5.63}{4.62}
\dl{5.82}{4.46}{5.57}{4.47}\dl{5.76}{4.48}{5.57}{4.47}\dl{5.72}{4.5}{5.57}{4.47}\dl{5.69}{4.51}{5.57}{4.47}\dl{5.66}{4.52}{5.57}{4.47}\dl{5.63}{4.53}{5.57}{4.47}
\dl{5.82}{4.46}{5.63}{4.62}\dl{5.76}{4.48}{5.63}{4.62}\dl{5.72}{4.5}{5.63}{4.62}\dl{5.69}{4.51}{5.63}{4.62}\dl{5.66}{4.52}{5.63}{4.62}\dl{5.63}{4.53}{5.63}{4.62}
\dl{8.42}{4.46}{8.17}{4.47}\dl{8.36}{4.48}{8.17}{4.47}\dl{8.32}{4.5}{8.17}{4.47}\dl{8.29}{4.51}{8.17}{4.47}\dl{8.26}{4.52}{8.17}{4.47}\dl{8.23}{4.53}{8.17}{4.47}
\dl{8.42}{4.46}{8.23}{4.62}\dl{8.36}{4.48}{8.23}{4.62}\dl{8.32}{4.5}{8.23}{4.62}\dl{8.29}{4.51}{8.23}{4.62}\dl{8.26}{4.52}{8.23}{4.62}\dl{8.23}{4.53}{8.23}{4.62}
\dl{-5.62}{2.41}{-5.72}{2.64}\dl{-5.67}{2.47}{-5.72}{2.64}\dl{-5.7}{2.51}{-5.72}{2.64}\dl{-5.73}{2.54}{-5.72}{2.64}\dl{-5.75}{2.56}{-5.72}{2.64}
\dl{-5.62}{2.41}{-5.83}{2.53}\dl{-5.67}{2.47}{-5.83}{2.53}\dl{-5.7}{2.51}{-5.83}{2.53}\dl{-5.73}{2.54}{-5.83}{2.53}\dl{-5.75}{2.56}{-5.83}{2.53}
\dl{-3.2}{2.63}{-3}{2.81}\dl{-3.14}{2.65}{-3}{2.81}\dl{-3.09}{2.67}{-3}{2.81}\dl{-3.06}{2.69}{-3}{2.81}\dl{-3.04}{2.7}{-3}{2.81}
\dl{-3.02}{2.71}{-3}{2.81}
\dl{-3.2}{2.63}{-2.94}{2.65}\dl{-3.14}{2.65}{-2.94}{2.65}\dl{-3.09}{2.67}{-2.94}{2.65}\dl{-3.06}{2.69}{-2.94}{2.65}\dl{-3.04}{2.7}{-2.94}{2.65}\dl{-3.02}{2.71}{-2.94}{2.65}
\dl{-8.2}{6.81}{-7.95}{6.82}\dl{-8.14}{6.83}{-7.95}{6.82}\dl{-8.1}{6.85}{-7.95}{6.82}\dl{-8.07}{6.86}{-7.95}{6.82}\dl{-8.04}{6.87}{-7.95}{6.82}\dl{-8.01}{6.88}{-7.95}{6.82}
\dl{-8.2}{6.81}{-8.01}{6.97}\dl{-8.14}{6.83}{-8.01}{6.97}\dl{-8.1}{6.85}{-8.01}{6.97}\dl{-8.07}{6.86}{-8.01}{6.97}\dl{-8.04}{6.87}{-8.01}{6.97}\dl{-8.01}{6.88}{-8.01}{6.97}
\ptlu{0}{-3.5}{\textup{
\small
\begin{tabular}{c}
Figure 4. The decomposition and colouring of $T$. The dotted line\\ 
represents the $p-q$ geodesic $P$. Letters that label vertices are circled. 
\end{tabular}
}
}
}
\]\\[1ex]
\indent Now, we prove that $c$ is a total rainbow connected colouring for $T$, which implies the required upper bound $\overset{\rightarrow}{\smash{t}rc}(T)\le 2d+7$. The proof from here onwards is the same as in \cite{LLLS2017}, but we shall provide it for the sake of completeness. Let $x,y\in V(T)$. We show that there always exists a total-rainbow $x-y$ path. This is true if $x=a$. Let $x\not\in V_1\cup\{a\}$. If $y=a$, then $xy\in A(T)$. Otherwise, $y\in V_j$ for some $1\le j\le d$. Let $Q$ be an $a-y$ geodesic, which contains one vertex in each of $V_1,\dots,V_j$. If $x\in V(Q)$, then $Q$ contains a total-rainbow $x-y$ path. Otherwise, $x\not\in V(Q)$, and $xa\cup Q$ is a desired path.

Hence, suppose that $x\in V_1$. Note that by the choice of $p$, if $x\neq p$, then either $xp\in A(T)$, or there exists $w\in V_1$ such that $xwp$ is a path. Indeed, if $xp\not\in A(T)$ and no such $w$ exists in $V_1$, then in $T[V_1]$, the set of in-neighbours of $x$ contains $p$, and all in-neighbours of $p$. Thus, $x$ contradicts the choice of $p$. Hence in $T[V_1]$, there is an $x-p$ path $P'$ of length at most two, which is total-rainbow. Now, consider $y\not\in V_{d-1}\cup V_d$. If $y\in V(P'\cup P)$, then $P'\cup P$ contains a total-rainbow $x-y$ path. Otherwise, $P'\cup P\cup qy$ is a desired path. Next, if $y\in V_d$, then by a similar argument as for $T[V_1]$, we have a $q-y$ total-rainbow path $P''$ of length at most two in $T[V_d]$, by the choice of $q$. We have $P'\cup P\cup P''$ is a desired path. Finally, let $y\in V_{d-1}$. If $ry\in A(T)$ in case (i) or case (ii), or $sy\in A(T)$ in case (ii), then $P'\cup P\cup ry$ or $P'\cup P\cup sy$ contains a total-rainbow $x-y$ path. Otherwise, we can find an in-neighbour of $y$ in $V_{d-2}$, say $z$. Note that $z$ exists by considering an $a-y$ geodesic, and that $z\neq r$ in case (i), or $z\not\in\{s,r\}$ in case (ii). Then, $P'\cup P\cup qzy$ is a desired path.
\end{proof}

\section{Cactus digraphs}\label{cactussect}
In \cite{AM2017}, Alva-Samos and Montellano-Ballesteros studied the rainbow connection number of cactus digraphs. In
this section, we will consider both the rainbow vertex-connection and the total rainbow connection numbers of cactus digraphs.

Recall that a \emph{cactus} is a strongly connected oriented graph where every arc belongs to exactly one directed cycle. We begin by describing the structure of a cactus. Recall that for a digraph $D$, a \emph{block} is a maximal subdigraph without a cut-vertex. An \emph{end-block} is a block which is incident with at most one cut-vertex, and there must exist at least one end-block in $D$. Moreover, if $D$ itself is not a block, then there must exist at least two end-blocks in $D$. The \emph{block graph} of $D$, denoted $B(D)$, is the graph with $V(B(D)) = \{B_i:B_i$ is block of $D\}$ and $B_iB_j \in E(B(D))$ if $B_i$ and $B_j$ share a vertex in $D$. From the definition of cactus, it is not hard to obtain the following characterisation, as remarked in \cite{AM2017}.

\begin{lem}\label{cactuslem1}\textup{\cite{AM2017}}
Let $Q$ be a digraph with $n$ vertices and $m$ arcs. Then the following statements are equivalent.
\begin{enumerate}
\item[(i)] $Q$ is a cactus.
\item[(ii)] $Q$ is a strongly connected digraph in which every block is a directed cycle.
\item[(iii)] Let $q$ be the number of blocks in $Q$. Then $Q$ has a decomposition into directed cycles $H_1,\ldots,H_q$ such that, for each $k=2,\ldots,q$  we have 
\[
\bigg|V(H_k)\cap\bigg(\bigcup\limits_{i=1}^{k-1}V(H_i)\bigg)\bigg|=1
\]
and $q=m-n+1$.
\item[(iv)] There is exactly one directed path between each pair of vertices of $Q$.
\end{enumerate}
\end{lem}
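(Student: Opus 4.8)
The plan is to establish the four conditions as equivalent by the cyclic chain $(i)\Rightarrow(ii)\Rightarrow(iii)\Rightarrow(iv)\Rightarrow(i)$, using throughout that a cactus is an oriented graph, so every directed cycle has length at least $3$ and the underlying graph of each cycle is $2$-connected. For $(i)\Rightarrow(ii)$, strong connectivity is part of the definition, so the content is that every block is a directed cycle. The crucial structural fact I would prove first is that \emph{any two distinct directed cycles of $Q$ meet in at most one vertex}. Indeed, suppose cycles $C_1,C_2$ share two vertices $u,v$; let $P$ be the directed $u$--$v$ sub-path of $C_1$ and $R$ the directed $v$--$u$ sub-path of $C_2$. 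Since each arc lies in exactly one cycle, $C_1$ and $C_2$ are arc-disjoint, so $PR$ is a closed directed walk; as $P$ and $R$ are each acyclic, any directed cycle contained in $PR$ must use arcs of both $P\subseteq C_1$ and $R\subseteq C_2$, whence some arc of $C_1$ lies in a second directed cycle, contradicting $(i)$. Granting this, each block is a single directed cycle: a directed cycle is cut-vertex-free, hence lies inside a block; and if a block properly contained a cycle $C$, it would contain an arc of a second cycle $C'$, which meets $C$ in at most one vertex $x$, making $x$ a cut-vertex of the block, which is impossible.

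For $(ii)\Rightarrow(iii)$, the $q$ blocks are directed cycles and, being blocks, are arc-disjoint and partition $A(Q)$. The block graph $B(Q)$ of a connected digraph is a tree, so I would list the blocks $H_1,\dots,H_q$ in an order obtained by searching this tree, which guarantees that each $H_k$ with $k\ge 2$ meets $\bigcup_{i<k}V(H_i)$ in exactly the single cut-vertex by which it attaches. Writing $\ell_i=|V(H_i)|=|A(H_i)|$, one has $m=\sum_i\ell_i$, and since adding $H_k$ contributes $\ell_k-1$ new vertices for $k\ge2$, $n=\ell_1+\sum_{k\ge2}(\ell_k-1)=m-(q-1)$, that is, $q=m-n+1$. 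For $(iii)\Rightarrow(iv)$ I would induct on $q$. If $q=1$ then $Q$ is a single directed cycle, in which there is clearly a unique directed path between each ordered pair. For the step, let $c$ be the vertex by which $H_q$ attaches to $Q'=\bigcup_{i<q}H_i$. A directed path between two vertices of $Q'$ cannot use any vertex of $V(H_q)\setminus\{c\}$, since it would be forced to pass through the cut-vertex $c$ twice; such paths are therefore unique by the inductive hypothesis. A directed path with an endpoint in $V(H_q)\setminus\{c\}$ must route through $c$ and then along the unique directed sub-path of the cycle $H_q$, so it too is unique, while existence holds because each $H_i$ is strongly connected and the cycles are glued connectedly.

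For $(iv)\Rightarrow(i)$, strong connectivity is immediate from the existence of a directed path between each ordered pair. Every arc $uv$ lies in a directed cycle, namely $uv$ closed up by the unique directed $v$--$u$ path; and it lies in exactly one, since two directed cycles through $uv$ would give two distinct directed $v$--$u$ paths, contradicting $(iv)$. Together with the oriented-graph setting in which a cactus is defined, this yields that $Q$ is a cactus, completing the cycle.

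The step I expect to be the main obstacle is the cycle-merging argument in $(i)\Rightarrow(ii)$: making fully rigorous that two directed cycles sharing two vertices genuinely produce a directed cycle reusing an arc. The care lies in the directional bookkeeping — one must pick a $u$--$v$ arc-progression in one cycle and a $v$--$u$ arc-progression in the other so that their union is a \emph{closed directed} walk, and then verify that every directed cycle extracted from it necessarily meets both $C_1$ and $C_2$ (which is where the acyclicity of the two sub-paths is used). Once this fact and the standard block--cut-tree properties are in hand, the block-tree ordering, the vertex/arc count, and the uniqueness induction are all routine.
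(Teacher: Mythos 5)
The paper itself offers no proof of this lemma: it is quoted from \cite{AM2017} with the remark that it is ``not hard to obtain'' from the definitions, so your argument has to stand on its own. Much of it does. Your central claim --- that under (i) two distinct directed cycles are arc-disjoint and meet in at most one vertex --- is proved correctly by the closed-walk argument (arc-disjointness from (i), acyclicity of the two subpaths, extraction of a directed cycle that must cross both), and your treatments of (ii)$\Rightarrow$(iii), (iii)$\Rightarrow$(iv) and (iv)$\Rightarrow$(i) are essentially sound. Two caveats there. First, the block graph $B(Q)$ as defined in this paper (blocks adjacent when they share a vertex) is \emph{not} in general a tree --- the paper explicitly warns of this; three cycles through a common vertex give $K_3$ --- so your ordering of $H_1,\dots,H_q$ must come from a search of the block--cut-vertex tree, not of $B(Q)$; the conclusion you want (each new block meets the union of the previous ones in exactly one vertex) is then correct. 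Second, in (iv)$\Rightarrow$(i), condition (iv) cannot by itself exclude symmetric arcs: a single digon satisfies (iv) yet is not an oriented graph, hence not a cactus under this paper's definition. So ``the oriented-graph setting'' you invoke is a standing hypothesis that must be imposed on $Q$, not something derivable from (iv); this is really a looseness in the statement itself, but it deserves to be flagged rather than waved at.

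The genuine gap is the final step of (i)$\Rightarrow$(ii). You argue: if a block $B$ properly contains a cycle $C$, then $B$ contains a second cycle $C'$ with $|V(C)\cap V(C')|\le 1$, ``making $x$ a cut-vertex of the block''. That inference is invalid: the common vertex $x$ is a cut-vertex of $C\cup C'$, but $B$ may properly contain $C\cup C'$, and further cycles inside $B$ can reconnect $C-x$ to $C'-x$ around $x$. Concretely, take three directed triangles $T_1,T_2,T_3$ with $V(T_1)\cap V(T_2)=\{x\}$, $V(T_2)\cap V(T_3)=\{y\}$, $V(T_3)\cap V(T_1)=\{z\}$ and $x,y,z$ distinct: their union is $2$-connected, hence a single block with no cut-vertex, even though any two of the triangles meet in exactly one vertex. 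So ``pairwise intersections of size at most one'' does not by itself force blocks to be single cycles; one must in addition rule out such \emph{rings} of cycles, and that is precisely where (i) has to be used a second time. Given a ring $C_1,\dots,C_k$ ($k\ge 3$) with distinct consecutive attachment vertices, concatenating the directed subpaths of the $C_i$ between consecutive attachment vertices produces a closed directed walk; a directed cycle extracted from it cannot lie inside a single segment (each segment is an acyclic path), so it uses arcs of at least two of the $C_i$ and therefore gives an arc lying on two distinct directed cycles, contradicting (i) exactly as in your two-cycle claim. This $k$-cycle version of your argument, together with the structural observation that a cut-vertex-free union of at least two cycles pairwise meeting in at most one vertex must contain such a ring, is the missing content of (i)$\Rightarrow$(ii); without it the implication does not go through.
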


Thus, (iv) shows that a cactus may be considered as a sort of analogue in a digraphs setting to trees in a simple graphs setting (although, the block graph of a cactus is not always a tree). Lemma \ref{cactuslem1} allows us to define some terms about a cactus $Q$. For $u,v\in V(Q)$, we denote by $uQv$ the unique $u - v$ directed path in $Q$. Let $K_{Q}$ denote the set formed by all the cut-vertices of $Q$. If $u\in V(Q)$ and $u$ belongs to the cycle $H$, then we write $u_{H+}$ and $u_{H-}$ for the out-neighbour and in-neighbour of $u$ in $H$. In particular, if $u\not\in K_Q$, then $u$ belongs to exactly one cycle, and we may simply write $u_+$ and $u_-$. We say that a cactus on $n$ vertices is an \emph{$(n,q)$-cactus} when it has a decomposition into $q$ cycles. Also, we always consider a cactus along with its cycle decomposition as given in (iii) of Lemma \ref{cactuslem1}. It is clear that such a decomposition is unique (up to the ordering of the cycles).

From the equation $q=m-n+1$ in (iii), we see that if $|V(H_i)|=n_i$ for $1\le i\le q$, then
\[
n_1+\dots+n_q=n+q-1.
\]
Since $n_i\ge 3$ for all $1\le i\le q$, we have $n\ge 2q+1$. It is also easy to obtain the following lemma.

\begin{lem}\label{cactuslem2}
Let $Q$ be a cactus with $q\ge 2$ blocks. Let $u\in V(H)$ and $v\in V(H')$, where $H,H'$ are two distinct cycles of $Q$. Then the unique $u-v$ and $v-u$ paths in $Q$ must be of the form $uQv=uHzQz'H'v$ and $vQu=vH'z'QzHu$, where $z\in V(H)\cap K_Q$ and $z'\in V(H')\cap K_Q$. Furthermore, no arc or internal vertex of $zQz'$ belongs to $H$ or $H'$, and likewise for $z'Qz$.
\end{lem}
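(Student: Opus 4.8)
The plan is to build both decompositions directly from the unique-path property in Lemma~\ref{cactuslem1}(iv), and then to match up the two gateway cut-vertices. Write $P=uQv$ for the unique $u$--$v$ path, let $z$ be the last vertex of $P$ lying in $V(H)$, and let $z'$ be the first vertex of $V(H')$ on the portion of $P$ running from $z$ to $v$. First I would identify the two outer segments. The segment of $P$ from $u$ to $z$ is a directed $u$--$z$ path, so by Lemma~\ref{cactuslem1}(iv) it coincides with the \emph{unique} such path; but $uHz$ (the directed path around the cycle $H$ from $u$ to $z$) is itself a $u$--$z$ path, so the two agree, and this segment equals $uHz$ and lies in $H$. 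Applying the identical argument to the portion of $P$ from $z'$ to $v$ shows it equals $z'H'v$ and lies in $H'$. Concatenating, $P$ has the form $uHz\cdot zQz'\cdot z'H'v$, which is the claimed shape once $z$ and $z'$ are seen to be cut-vertices.

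Next I would verify the ``furthermore'' clause simultaneously with $z,z'\in K_Q$. By the choice of $z$, no vertex of $P$ strictly after $z$ lies in $V(H)$, so no internal vertex of $zQz'$ is in $H$; by the choice of $z'$, no internal vertex of $zQz'$ is in $H'$. Since every arc of a cycle joins two of its own vertices while the internal vertices of $zQz'$ avoid both cycles, no arc of $zQz'$ can lie in $H$ or $H'$ (the degenerate case $z=z'$ being trivial). In particular, when $z\neq z'$ the first arc of $zQz'$ is an out-arc of $z$ not belonging to $H$, so $z$ meets a second block and hence $z\in K_Q$; symmetrically the last arc of $zQz'$ gives $z'\in K_Q$. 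When $z=z'$, this vertex already lies in the two distinct blocks $H$ and $H'$, so it is a cut-vertex as well.

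Running the same reasoning with the roles of $(u,H)$ and $(v,H')$ exchanged produces $vQu=vH'\hat z'\cdot \hat z'Q\hat z\cdot \hat z Hu$ with $\hat z\in V(H)\cap K_Q$ and $\hat z'\in V(H')\cap K_Q$. The remaining, and most delicate, step is to prove $\hat z=z$ and $\hat z'=z'$, i.e.\ that the gateway cut-vertex of $H$ leading toward $H'$ is the same for both directions; this is where the cactus structure must be invoked carefully. I would argue by contradiction via Lemma~\ref{cactuslem1}(iv): the forward decomposition yields a directed $z$--$v$ path avoiding every arc of $H$, and the reverse decomposition yields a directed $v$--$\hat z$ path avoiding every arc of $H$, so extracting a directed path from their concatenation gives a $z$--$\hat z$ path using no arc of $H$. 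If $z\neq\hat z$, this path differs from the directed path $zH\hat z$ inside the cycle $H$, which \emph{does} use arcs of $H$, contradicting the uniqueness of directed paths. Hence $z=\hat z$, and by symmetry $z'=\hat z'$. The outer-segment identifications and the cut-vertex property are routine consequences of Lemma~\ref{cactuslem1}(iv); the genuine obstacle is this coincidence of the two gateway cut-vertices for the opposite paths, which rests squarely on the ``exactly one directed path'' property of a cactus.
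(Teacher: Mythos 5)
Your proposal is correct, but it follows a genuinely different route from the paper's. The paper argues top-down from Lemma \ref{cactuslem1}(iii): it first sets up the global picture of $H$, $H'$ and a chain of intermediate ``kissing cycles'' meeting consecutively at distinct cut-vertices, explicitly constructs the $u-v$ and $v-u$ paths by traversing this chain, and only then invokes uniqueness (iv) to conclude that these constructed paths are \emph{the} paths; with that construction the fact that both directions pass through the same gateway vertices $z,z'$ is automatic, since both paths traverse the same chain of cut-vertices (in opposite orders). You instead work bottom-up from uniqueness alone: you take the unique path $uQv$ as given, define $z$ and $z'$ intrinsically (last vertex in $V(H)$, first subsequent vertex in $V(H')$), use (iv) locally to identify the outer segments with $uHz$ and $z'H'v$, read off the ``furthermore'' clause and cut-vertex membership from the definitions, and then supply a separate argument --- concatenating the $z$--$v$ and $v$--$\hat z$ paths, both avoiding $A(H)$, extracting a $z$--$\hat z$ path, and contradicting (iv) against $zH\hat z$ --- to force the gateway vertices of the two directions to coincide. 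This correctly isolates the one claim that the paper's construction gets for free, and it is the right thing to prove; your treatment of it is sound. The trade-off: the paper's proof is shorter and more pictorial but leans on structural facts about the chain decomposition that are left informal (it is explicitly only a sketch), whereas yours is more self-contained, needing only (iv), the defining property that each arc lies in exactly one cycle, and the standard fact that a vertex in two blocks is a cut-vertex. One small point to tighten: when the middle segment $zQz'$ is a single arc $zz'$, your internal-vertex argument is vacuous; you should instead note that $z'\notin V(H)$ (choice of $z$) and $z\notin V(H')$ (else $z'=z$), so the arc lies in neither cycle --- this is immediate from your definitions but deserves a line.
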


\begin{proof}[Proof (sketch)]
By Lemma \ref{cactuslem1}(iii), in order to connect $u$ to $v$ in $Q$, we connect $H$ and $H'$ by a chain of intermediate cycles. We may think of $H,H'$, and these intermediate cycles as a chain of ``kissing cycles'', with $H$ and $H'$ at the two ends, and every two consecutive cycles meeting at a cut-vertex of $Q$. The vertices where two consecutive cycles meet are distinct. Now to connect $u$ to $v$, we start at $u$, traverse along $H$ until we reach the cut-vertex in $H$, say $z$. Then we traverse along the chain of intermediate cycles until we reach the cut-vertex in $H'$, say $z'$. Finally, we traverse along $H'$ until we reach $v$. Now, note that to connect $v$ to $u$, we traverse in the opposite direction, first from $v$, through $H'$ to $z'$, then the intermediate cycles in reverse order, and then through $z$, $H$, and finally to $u$. By Lemma \ref{cactuslem1}(iv), these are the unique $u-v$ and $v-u$ paths in $Q$, and they have the forms as described in the lemma.
\end{proof}

By Lemma \ref{cactuslem1}(iv), since the directed path between any pair of vertices in a cactus $Q$ is unique, we have
\[
\overset{\rightarrow}{rc}(Q)= \overset{\rightarrow}{src}(Q),\quad\overset{\rightarrow}{rvc}(Q)= \overset{\rightarrow}{srvc}(Q),\quad\textup{and}\quad\overset{\rightarrow}{\smash{t}rc}(Q)= \overset{\rightarrow}{s\smash{t}rc}(Q).
\]
Thus, it suffices to consider the parameters $\overset{\rightarrow}{rc}(Q)$, $\overset{\rightarrow}{rvc}(Q)$, and $\overset{\rightarrow}{\smash{t}rc}(Q)$. Moreover, note that again by Lemma \ref{cactuslem1}(iv), we have 
\begin{equation}
\overset{\rightarrow}{\smash{t}rc}(Q)\le\overset{\rightarrow}{rc}(Q)+\overset{\rightarrow}{rvc}(Q),\label{rc+rvceq}
\end{equation}
since a total rainbow connected colouring of $Q$ can be obtained by combining a rainbow connected colouring with $\overset{\rightarrow}{rc}(Q)$ colours, and a rainbow vertex-connected colouring with $\overset{\rightarrow}{rvc}(Q)$ additional colours.

If $Q$ is an $(n,1)$-cactus, then $Q=\rC_n$. We have $\overset{\rightarrow}{rc}(\rC_n)=n$ for $n\ge 3$. In \cite{LLLS2017}, it was proved that
\[
\overset{\rightarrow}{rvc}(\rC_n)=
\left\{
\begin{array}{ll}
n-2 & \textup{if $n=3,4$,}\\
n & \textup{if $n\ge 5$.}
\end{array}
\right.
\]
Finally, $\overset{\rightarrow}{\smash{t}rc}(\rC_n)$ is given by Theorem \ref{thm3}. From now on, we consider $(n,q)$-cactus digraphs where $q\ge 2$. In \cite{AM2017}, Alva-Samos and Montellano-Ballesteros proved the following result.

\begin{thm}\label{ASMBcactusthm1}\textup{\cite{AM2017}}
Let $Q$ be an $(n,q)$-cactus with $q\ge 2$. We have the following.
\begin{enumerate}
\item[(a)] $n-q+1\le\overset{\rightarrow}{rc}(Q)\le n-1$.
\item[(b)] $\overset{\rightarrow}{rc}(Q)=n-q+1$ if and only if $K_Q$ is independent.
\item[(c)] $\overset{\rightarrow}{rc}(Q)=n-1$ if and only if $B(Q)=P_q$ and $Q[K_Q]=\rP_{q-1}$.
\end{enumerate}
\end{thm}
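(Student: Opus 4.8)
The plan is to recast the problem as a colouring of the arc set. Because a cactus has a unique directed path between every ordered pair of vertices (Lemma \ref{cactuslem1}(iv)), an arc-colouring is rainbow connected precisely when every directed path of $Q$ is rainbow, i.e. when no two arcs that lie together on some directed path get the same colour. Calling two such arcs \emph{in conflict}, $\overset{\rightarrow}{rc}(Q)$ is exactly the chromatic number of the conflict graph on $A(Q)$. The single fact I would establish first, since it drives both bounds, is this: if $E$ is an end-block (a cycle meeting the rest of $Q$ only in a cut-vertex $z$), then every arc of $E$ other than the out-arc $zz_{E+}$ and the in-arc $z_{E-}z$ is in conflict with \emph{every} arc outside $E$. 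Indeed, a ``middle'' arc of $E$ lies only on paths that enter $E$ at $z$ and stop inside $E$, or start inside $E$ and leave at $z$; using strong connectivity of $Q-(V(E)\setminus\{z\})$ to route any external arc to or from $z$, such a path extends through the middle arc, giving a common path.

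For the upper bound I would colour each arc $uv$ by its tail $u$. Any directed path visits distinct vertices, so its arcs get distinct colours, and this uses exactly $n$ colours since $Q$ is strongly connected. As $q\ge 2$, $Q$ is not a single block, so it has at least two end-blocks $E,E'$, with in-arcs $e=z_{E-}z$ and $e'=z'_{E'-}z'$. These two arcs are not in conflict: each forces any path containing it to \emph{start} inside its own end-block, and a path has only one start (if $z=z'$ they are merely two arcs into $z$). Recolouring $e'$ with the colour of $e$ keeps the colouring rainbow and frees one colour, so $\overset{\rightarrow}{rc}(Q)\le n-1$.

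For the lower bound I would induct on $q$ by deleting an end-block. Let $E$ sit at $z$ and put $Q'=Q-(V(E)\setminus\{z\})$, a strongly connected $(n',q-1)$-cactus with $n'=n-|A(E)|+1$. In any rainbow colouring of $Q$, its restriction to $Q'$ is again rainbow, and by the key fact the $|A(E)|$ (pairwise conflicting) arcs of $E$ need $|A(E)|$ distinct colours, of which at most two already occur in $Q'$. Hence $\overset{\rightarrow}{rc}(Q)\ge \overset{\rightarrow}{rc}(Q')+|A(E)|-2$. With the base case $\overset{\rightarrow}{rc}(\rC_n)=n=n-1+1$, the inequality $\overset{\rightarrow}{rc}(Q')\ge n'-(q-1)+1$ telescopes to $\overset{\rightarrow}{rc}(Q)\ge n'+|A(E)|-q=n-q+1$.

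The two equality statements refine this. For (b), if $K_Q$ is independent then no arc joins two cut-vertices, so the families ``all out-arcs of $z$'' and ``all in-arcs of $z$'' ($z\in K_Q$) are pairwise disjoint and each is conflict-free; colouring by these families (and singletons elsewhere) realises the full reuse $2\sum_{z}(d_z-1)=2(q-1)$, where $d_z$ is the number of blocks through $z$ and $\sum_z(d_z-1)=q-1$ follows from the block--cut tree; this meets the lower bound, giving $n-q+1$. Conversely an arc $zz'$ with $z,z'\in K_Q$ cannot lie in both the out-family of $z$ and the in-family of $z'$, and tracing this loss through the deletion induction forces $\overset{\rightarrow}{rc}(Q)>n-q+1$. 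For (c), the construction already gives $q-1$ reuses at cut-vertices plus one end-block merge, so the number of repetitions $m-\overset{\rightarrow}{rc}(Q)$ is at least $q$ and equals $q$ only in the most rigid configuration: a cut-vertex in three blocks, or a third end-block, supplies an extra reuse, which forces $B(Q)=P_q$ with every $d_z=2$, and the remaining slack vanishes exactly when the $q-1$ cut-vertices are consecutive and consistently oriented in their shared cycles, i.e. $Q[K_Q]=\rP_{q-1}$. The hardest part, and where I expect the bookkeeping to be delicate, is precisely this equality analysis, because the conflict relation is genuinely intricate: besides the cut-vertex ``both-out/both-in'' non-conflicts, the entry (resp. exit) arcs of distinct end-blocks are also mutually non-conflicting, so large conflict-free sets arise in several competing ways, and controlling how these families of reuses interact—via the end-block induction together with a careful charging of each reuse to a cut-vertex or an end-block—is the main obstacle.
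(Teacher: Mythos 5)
You should first note that the paper does not prove this theorem at all: it is quoted from \cite{AM2017}, so there is no internal proof to compare against. The closest material in the paper is Lemma \ref{cactuslem3} together with Theorems \ref{cactusthm1}--\ref{cactusthm3}, which establish the $\overset{\rightarrow}{rvc}$ and $\overset{\rightarrow}{\smash{t}rc}$ analogues by exactly the strategy you use for part (a). Judged on its own merits, your part (a) is correct: the conflict-graph reformulation is legitimate (by Lemma \ref{cactuslem1}(iv), rainbow connectedness is precisely the condition that no two arcs on a common directed path share a colour); your key fact that every ``middle'' arc of an end-block conflicts with every external arc is true, although its justification needs the same case analysis as the proof of Lemma \ref{cactuslem3}(b) — in particular the cases where the external arc is incident with the cut-vertex $z$, or where the head of the external arc lies on the $z$--tail path, which your one-line appeal to strong connectivity glosses over; the tail-colouring plus one end-block merge gives $n-1$; and the deletion inequality $\overset{\rightarrow}{rc}(Q)\ge\overset{\rightarrow}{rc}(Q')+|A(E)|-2$ telescopes correctly. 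The forward implication of (b) is also sound: with $K_Q$ independent the in/out families are pairwise disjoint and conflict-free, and $\sum_z(d_z-1)=q-1$ yields exactly $n-q+1$ colours.

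The genuine gaps are the converse of (b) and all of (c). For (b), ``tracing this loss through the deletion induction'' does not work as stated: if the end-block $E$ you delete is the unique second block at $z$ or at $z'$ (the adjacent cut-vertices), then that vertex ceases to be a cut-vertex of $Q'$, the set $K_{Q'}$ may be independent, and induction only returns $\overset{\rightarrow}{rc}(Q')\ge n'-(q-1)+1$, which telescopes to $n-q+1$, not $n-q+2$. This failure is unavoidable in the base configuration $q=3$ whose block path is $E_z$--$z$--$B$--$z'$--$E_{z'}$ with $zz'\in A(B)$: there \emph{every} end-block deletion destroys the adjacency, and one must analyse colourings directly. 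This can be done — the only arcs that can share colours are the seven arcs incident with $z$ or $z'$, and one checks that their non-conflict graph is a path on seven vertices (the families ``in-arcs of $z$'', ``end-block entry arcs'', ``in-arcs of $z'$'', ``out-arcs of $z$'', ``end-block exit arcs'', ``out-arcs of $z'$'' chain together, with $zz'$ sitting in two of them), so the savings are at most a maximum matching, namely $3=2(q-1)-1$ — but nothing of this sort appears in your write-up, and for $q\ge4$ you would also need to argue that an end-block whose deletion preserves both cut-vertices exists. For (c), both implications are only described, not proved, as you yourself concede. The forward direction requires showing that a special path cactus admits \emph{no} colouring with more than $q$ savings, which needs control over all independent sets of the conflict graph (including the end-block entry/exit pairs you identify, and the overlaps caused by arcs such as $u_iu_{i+1}$ lying simultaneously in an out-family and an in-family); the converse requires a verifiable charging argument that each structural violation — a cut-vertex in three blocks, a third end-block, a missing or inconsistently oriented arc between consecutive cut-vertices — produces an extra saving. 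As it stands, (a) and half of (b) are proved; the rest is a plausible programme rather than a proof.
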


They also showed that for every value $k$ in the range in (a), there exists an $(n,q)$-cactus whose rainbow connection number is equal to $k$.

Here, our aim is to prove similar results for the parameters $\overset{\rightarrow}{rvc}(Q)$ and $\overset{\rightarrow}{\smash{t}rc}(Q)$. To proceed, we first prove the following lemma. Given a digraph $D$ with a vertex-colouring $c$, we say that a vertex $u\in V(D)$ is \emph{singularly coloured} if no other vertex of $D$ has colour $c(u)$. Likewise, if $c$ is a total-colouring of $D$, we say that an element $x\in V(D)\cup A(D)$ is \emph{singularly coloured} if no other element of $V(D)\cup A(D)$ has colour $c(x)$.

\begin{lem}\label{cactuslem3}
Let $Q$ be a cactus with $q\ge 2$ blocks. Let $H$ be an end-block of $Q$, let $u$ be the unique cut-vertex of $Q$ in $H$, and $Q'=Q-V(H-u)$. Let $v$ and $w$ be the out-neighbour and in-neighbour of $u$ in $H$.
\begin{enumerate}
\item[(a)] If $c$ is a rainbow vertex-connected colouring on $Q$, then all vertices of $V(H-\{u,v,w\})$ must be singularly coloured.
\item[(b)] If $c$ is a total rainbow connected colouring on $Q$, then all elements of $V(H-\{u,v,w\})\cup A(H-\{wu,uv\})$ must be singularly coloured.
\item[(c)] We have 
\[
\overset{\rightarrow}{rvc}(Q) \ge\overset{\rightarrow}{rvc}(Q')+|V(H)|-3\textup{\emph{,\quad and}}\quad\overset{\rightarrow}{\smash{t}rc}(Q)\ge\overset{\rightarrow}{\smash{t}rc}(Q')+2|V(H)|-5.
\]
\end{enumerate}
\end{lem}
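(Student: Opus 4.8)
The plan is to derive part (c) from parts (a) and (b) by a restriction-and-counting argument, and to prove (a) and (b) directly by exhibiting, for each interior element of $H$ and each other element of $Q$, a pair of vertices whose (unique) directed path contains both, thereby forcing them to receive distinct colours.

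For (c), first note that $Q'=Q-V(H-u)$ is a cactus with $q-1$ blocks (Lemma \ref{cactuslem1}), hence strongly connected, so its parameters are defined. Suppose $c$ is a rainbow vertex-connected colouring of $Q$ using $\overset{\rightarrow}{rvc}(Q)$ colours. Since $H$ is an end-block meeting the rest of $Q$ only at $u$, no directed path between two vertices of $Q'$ can use a vertex of $V(H-u)$; hence by Lemma \ref{cactuslem1}(iv) the path $xQy$ for $x,y\in V(Q')$ coincides with $xQ'y$, and the restriction $c|_{Q'}$ is a rainbow vertex-connected colouring of $Q'$. By part (a) the $|V(H)|-3$ interior vertices of $H$ are singularly coloured, so their colours occur on no vertex of $Q'$; thus $c|_{Q'}$ uses at most $\overset{\rightarrow}{rvc}(Q)-(|V(H)|-3)$ colours, giving $\overset{\rightarrow}{rvc}(Q')\le\overset{\rightarrow}{rvc}(Q)-(|V(H)|-3)$, which is the first inequality. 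The second follows identically, now using part (b): the singularly-coloured set $V(H-\{u,v,w\})\cup A(H-\{wu,uv\})$ has $(|V(H)|-3)+(|V(H)|-2)=2|V(H)|-5$ elements, none of whose colours can occur in $Q'$, and the restriction of a total rainbow connected colouring of $Q$ to $Q'$ is again total rainbow connected, yielding $\overset{\rightarrow}{\smash{t}rc}(Q)\ge\overset{\rightarrow}{\smash{t}rc}(Q')+2|V(H)|-5$.

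For (a) and (b), write the cycle as $H=u\,z_1z_2\cdots z_\ell\,u$ with $z_1=v$ and $z_\ell=w$, so the interior vertices are $z_2,\dots,z_{\ell-1}$, and fix an interior element, say a vertex $z_i$ with $2\le i\le\ell-1$ (an interior arc $z_iz_{i+1}$ is treated analogously). The guiding principle is that, since paths are unique, an element of $Q$ is forced to differ in colour from $z_i$ exactly when some directed path contains both as a (total-)rainbow internal vertex or arc. I would first record three convenient paths: $vQw=z_1z_2\cdots z_\ell$, which forces the interior vertices (and, for (b), the arcs $z_1z_2,\dots,z_{\ell-1}z_\ell$) to be pairwise distinct; the path $aQw$, where $a$ is an in-neighbour of $u$ outside $H$ (which exists since $u$ is a cut-vertex and $q\ge2$, so $u$ lies on a second directed cycle), whose internal vertices are $u,v,z_2,\dots,z_{\ell-1}$; and $vQu=z_1\cdots z_\ell u$, whose internal vertices are $z_2,\dots,z_\ell=w$. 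Together these force every interior element of $H$ to differ in colour from each of $u,v,w$, from one another, and (in (b)) from the arcs $au,uv,wu$ as well.

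The remaining and main task is to handle an arbitrary \emph{external} element, i.e.\ one lying in $Q'$ other than $u$. Here I would argue via Lemma \ref{cactuslem2}: for an external vertex $y$, let $B$ be the block of $Q'$ in which $uQ'y$ terminates and $c_B$ its entering cut-vertex. Then $y$ either has an out-neighbour in $B$ lying strictly beyond it, in which case the ``Type B'' path from $v$ that fully traverses $H$, leaves through $u$, and continues one step past $y$ contains $y$ internally together with every interior element of $H$; or else $y$ is the in-neighbour of $c_B$ in $B$, in which case the ``Type A'' path starting at the in-neighbour of $y$ in $B$, passing through $y$, returning through $c_B$ to $u$, and then entering $H$ to end at $z_{i+1}$ (or at $w$, to capture the last interior arc) again contains $y$ internally together with the desired interior element. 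Both are the unique paths between their endpoints, and the degenerate configurations ($c_B=u$, or $y=c_B$ itself) are absorbed into the same two constructions. For (b) one simply reads the needed distinctness of arcs and internal vertices off the total-rainbow condition, taking care to extend the path far enough into $H$ to include the particular interior arc under consideration. I expect this external case — verifying that in every block configuration one of the two path types is available and simple — to be the principal obstacle; everything else is bookkeeping.
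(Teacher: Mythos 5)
Your proposal is correct and takes essentially the same route as the paper's proof: parts (a) and (b) are established by producing, for any alleged colour repetition, a single (necessarily unique) directed path containing both elements internally — your Type A/Type B dichotomy is exactly the paper's choice between the two alternative paths $x_-HuQ'u'H'r_{H'+}$ and $r_{H'-}H'u'Q'uHx_+$ built from Lemma \ref{cactuslem2} — and part (c) is the same restriction-plus-counting deduction from (a) and (b). The only difference is organisational: you dispose of the comparisons with $u,v,w$ and the arcs $uv,wu$ via three fixed paths, whereas the paper folds these into its general case analysis.
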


\begin{proof}
We first prove (b), from which we can easily deduce (a). We then prove (c).\\[1ex]
\indent(b) Let $c$ be a total rainbow connected colouring on $Q$. We will proceed by assuming that there are $a\in V(H-\{u,v,w\})\cup A(H-\{wu,uv\})$ and $b\in V(Q)\cup A(Q)$ with $c(a)=c(b)$. We then obtain a contradiction by finding a path $P$ whose set of arcs and internal vertices contains both $a$ and $b$, since the end-vertices of $P$ would then not be connected by a total-rainbow path in $Q$.

Firstly, let $x\in V(H-\{u,v,w\})$, and suppose that we have $r\in V(Q)$ with $c(r)=c(x)$. Suppose that $r\in V(H-u)$. Then note that either $H$ has length at least $5$, or $H$ has length $4$ and $r\in \{v,w\}$. In either case, we may let $P=x_-Hr_+$ or $r_-Hx_+$. Now, suppose that $r\in V(Q')$. Then $r$ belongs to some cycle $H'$ in $Q'$. By Lemma \ref{cactuslem2}, the unique $x-r$ and $r-x$ paths in $Q$ are $xQr=xHuQ'u'H'r$ and $rQx=rH'u'Q'uHx$, for some $u'\in V(H')\cap K_Q$. Thus, we may let $P=x_-HuQ'u'H'r_{H'+}$ or $P=r_{H'-}H'u'Q'uHx_+$. 

Next, suppose that we have $st\in A(Q)$ with $c(st)=c(x)$. If $st\in A(H)$, then since $H$ has length at least $4$, we let $P=x_-Ht$ or $P=sHx_+$. If $st\in A(Q')$, then let $H'$ be the unique cycle in $Q'$ containing $st$. By Lemma \ref{cactuslem2}, the unique $x-s$ and $s-x$ paths in $Q$ are $xQs=xHuQ'u'H's$ and $sQx=sH'u'Q'uHx$, for some $u'\in V(H')\cap K_Q$. The same condition holds for $t$ in place of $s$. Thus, we may let $P=x_-HuQ'u'H't$ or $P=sH'u'Q'uHx_+$. 

Secondly, let $xy\in A(H-\{wu,uv\})$, and suppose that we have $r\in V(Q)$ with $c(r)=c(xy)$. If $r\in V(H-u)$, then we may let $P=xHr_+$ or $P=r_-Hy$. Now, suppose that $r\in V(Q')$. Then $r$ belongs to some cycle $H'$ in $Q'$. By Lemma \ref{cactuslem2}, the unique $x-r$ and $r-x$ paths in $Q$ are $xQr=xHuQ'u'H'r$ and $rQx=rH'u'Q'uHx$, for some $u'\in V(H')\cap K_Q$. The same condition holds for $y$ in place of $x$. Thus, we may let $P=xHuQ'u'H'r_{H'+}$ or $P=r_{H'-}H'u'Q'uHy$. 

Finally, suppose that we have $st\in A(Q)$ with $c(st)=c(xy)$. If $st\in A(H)$, then we let $P=xHt$ or $P=sHy$. If $st\in A(Q')$, then let $H'$ be the unique cycle in $Q'$ containing $st$. By Lemma \ref{cactuslem2}, the unique $x-s$ and $s-x$ paths in $Q$ are $xQs=xHuQ'u'H's$ and $sQx=sH'u'Q'uHx$, for some $u'\in V(H')\cap K_Q$. The same conditions hold for 
when $x$ is replaced by $y$, or $s$ is replaced by $t$, or both. Thus, we may let $P=xHuQ'u'H't$ or $P=sH'u'Q'uHy$. 

In every case, we have found a desired path $P$, and thus we always have a contradiction.\\[1ex]
\indent(a) Let $c$ be a rainbow vertex-connected colouring on $Q$, and suppose that there exist vertices $x\in V(H-\{u,v,w\})$ and $r\in V(Q)$ with $c(r)=c(x)$. We can use exactly the same argument as in the first part of (b) to obtain a path $P$ which contains $x$ and $r$ as internal vertices. Thus $P$ is not vertex-rainbow, which is a contradiction.\\[1ex]
\indent(c) Suppose that $c$ is a rainbow vertex-connected colouring of $Q$. Then in $Q'$, we require at least $\overset{\rightarrow}{rvc}(Q')$ colours for $c$, otherwise there would exist two vertices in $Q'$ which are not connected by a vertex-rainbow path in $Q'$, and thus also in $Q$. Also by (a), the vertices of $V(H-u)$ must provide $|V(H)|-3$ additional colours for $c$. Therefore, $c$ uses at least $\overset{\rightarrow}{rvc}(Q')+|V(H)|-3$ colours, and $\overset{\rightarrow}{rvc}(Q) \ge\overset{\rightarrow}{rvc}(Q')+|V(H)|-3$. Likewise, if $c$ is a total rainbow connected colouring of $Q$, then in $Q'$, we require at least $\overset{\rightarrow}{\smash{t}rc}(Q')$ colours for $c$. By (b), the elements of $V(H-u)\cup A(H)$ must provide $2|V(H)|-5$ additional colours for $c$. Therefore, $\overset{\rightarrow}{\smash{t}rc}(Q) \ge\overset{\rightarrow}{\smash{t}rc}(Q')+2|V(H)|-5$.
\end{proof}

We are now ready to present the main results of this section. Firstly, we have the following result, which contains the rainbow vertex-connection and total rainbow connection analogues of Theorem \ref{ASMBcactusthm1}.

\begin{thm}\label{cactusthm1}
Let $Q$ be an $(n,q)$-cactus, with $q\ge 2$. We have the following.
\begin{enumerate}
\item[(a)] $n-2q+2\le\overset{\rightarrow}{rvc}(Q)\le n-2$.
\item[(b)] $2n-3q+3\le\overset{\rightarrow}{\smash{t}rc}(Q)\le 2n-3$.
\end{enumerate}
\end{thm}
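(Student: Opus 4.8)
The plan is to prove the two upper bounds first and then obtain both lower bounds through a single induction on $q$ built on Lemma \ref{cactuslem3}(c). A useful preliminary reduction is that the upper bound in (b) follows from the one in (a): by the inequality (\ref{rc+rvceq}) together with Theorem \ref{ASMBcactusthm1}(a), once $\overset{\rightarrow}{rvc}(Q)\le n-2$ is known we get $\overset{\rightarrow}{\smash{t}rc}(Q)\le\overset{\rightarrow}{rc}(Q)+\overset{\rightarrow}{rvc}(Q)\le(n-1)+(n-2)=2n-3$. So for the upper bounds it suffices to produce a rainbow vertex-connected colouring of $Q$ with $n-2$ colours.

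For that colouring, I would exploit that $q\ge2$ forces $Q$ to have two distinct end-blocks $H,H'$, with unique cut-vertices $u,u'$. Put $v=u_{H+}$, $w=u_{H-}$, $v'=u'_{H'+}$, $w'=u'_{H'-}$; since $v,w\in V(H)\setminus\{u\}$ and $v',w'\in V(H')\setminus\{u'\}$ are all non-cut, they are four distinct vertices (a vertex common to two distinct blocks would be a cut-vertex). The crucial local claim is that $v$ and $v'$ can never both be internal on one directed path, and likewise $w$ and $w'$. Indeed, because $H$ is an end-block every vertex of $H$ other than $u$ is non-cut, so whenever $v$ is internal on a path $P$ the arcs $u\to v$ and $v\to v_+$ lie on $P$, and as $u$ cannot be revisited the whole portion of $P$ after $v$ stays inside $H$; if $v'$ were also internal on $P$ then (say with $v$ first) $v'$ would lie in $H$, contradicting $v'\in V(H')\setminus\{u'\}$. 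The claim for $w,w'$ is symmetric, using the portion of $P$ \emph{before} $w$. Since paths in a cactus are unique (Lemma \ref{cactuslem1}(iv)), a colouring is rainbow vertex-connected exactly when no two equally-coloured vertices are simultaneously internal on a common path, so I may colour $v,v'$ alike, $w,w'$ alike, and all remaining vertices distinctly, using $n-2$ colours.

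For the lower bounds I would induct on $q$, with base case $q=2$. When $q=2$, writing $H_1,H_2$ for the two cycles meeting at the cut-vertex $u$, Lemma \ref{cactuslem2} shows the unique path from $u_{H_1+}$ to $u_{H_2-}$ runs once around $H_1$ and once around $H_2$, hence is a Hamiltonian path of length $n-1$, with exactly $n-2$ internal vertices and $n-1$ arcs; as this path must be (totally) rainbow we get $\overset{\rightarrow}{rvc}(Q)\ge n-2$ and $\overset{\rightarrow}{\smash{t}rc}(Q)\ge(n-2)+(n-1)=2n-3$, matching the targets at $q=2$. For the step $q\ge3$, peel an end-block $H$ with cut-vertex $u$ and set $Q'=Q-V(H-u)$, a cactus with $q-1\ge2$ blocks and $n'=n-|V(H)|+1$ vertices; applying the induction hypothesis to $Q'$ and then Lemma \ref{cactuslem3}(c) yields $\overset{\rightarrow}{rvc}(Q)\ge\overset{\rightarrow}{rvc}(Q')+|V(H)|-3\ge(n'-2(q-1)+2)+|V(H)|-3=n-2q+2$, and likewise $\overset{\rightarrow}{\smash{t}rc}(Q)\ge\overset{\rightarrow}{\smash{t}rc}(Q')+2|V(H)|-5\ge(2n'-3(q-1)+3)+2|V(H)|-5=2n-3q+3$.

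The main obstacle, and the reason the induction must start at $q=2$, is the base case. If one tried to peel all the way down to a single cycle $H_1$ and invoke the known values of $\overset{\rightarrow}{rvc}(\rC_{n_1})$ and $\overset{\rightarrow}{\smash{t}rc}(\rC_{n_1})$, then for $n_1\in\{3,4\}$ (where $\overset{\rightarrow}{rvc}(\rC_{n_1})=n_1-2$ and $\overset{\rightarrow}{\smash{t}rc}(\rC_{n_1})$ is only $3$ or $6$) the telescoped estimate falls short of the target by $2$, respectively up to $3$ in the total case: the sums coming from Lemma \ref{cactuslem3}(c) never charge for the cut-vertex of the final cycle nor for its two neighbours — exactly the vertices excluded in parts (a),(b) of that lemma. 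Replacing the single-cycle base by the two-cycle base removes this deficit at once, since the Hamiltonian path through $H_1\cup H_2$ simultaneously accounts for the cut-vertex and those neighbours. The only remaining routine verifications are the distinctness of $v,w,v',w'$ and that the arithmetic in the inductive step collapses to the stated formulas.
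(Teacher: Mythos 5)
Your proposal is correct and follows essentially the same route as the paper: the identical two-end-block colouring (with $v,v'$ and $w,w'$ paired) for the upper bound in (a), the upper bound in (b) deduced from (\ref{rc+rvceq}) together with Theorem \ref{ASMBcactusthm1}(a), and the lower bounds by induction on $q$ with the two-cycle Hamiltonian-path base case and Lemma \ref{cactuslem3}(c) for the inductive step. Your structural justification that $v,v'$ (resp.\ $w,w'$) can never both be internal on a common path is a slightly cleaner packaging of the paper's case analysis, but the argument is the same in substance.
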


\begin{proof}
(a) We first prove the lower bound by induction on $q$. For $q=2$, $Q$ consists of two cycles $H,H'$ meeting at a cut-vertex $u$. Then $u_{H+}HuH'u_{H'-}$ is a path in $Q$ with length $n-1$, and therefore $\overset{\rightarrow}{rvc}(Q)\ge n-2$. Now let $q\ge 3$, and suppose the lower bound holds for any cactus with $q-1$ blocks. Let $H$ be an end-block of $Q$, let $u$ be the unique cut-vertex of $Q$ in $H$, and let $Q'=Q-V(H-u)$. By Lemma \ref{cactuslem3}(c) and induction, we have
\begin{align*}
\overset{\rightarrow}{rvc}(Q) &\ge\overset{\rightarrow}{rvc}(Q')+|V(H)|-3\\
&\ge |V(Q')|-2(q-1)+2+|V(H)|-3\\
&=n-2q+2,
\end{align*}
since $|V(Q')|+|V(H)|=n+1$. This proves the lower bound of (a).

Now we prove the upper bound. Let $H,H'$ be two end-blocks of $Q$, and $u,u'$ be the cut-vertices of $Q$ in $H,H'$. Let $v,w$ be the out-neighbour and in-neighbour of $u$ in $H$, and similarly for $v',w'$ in $H'$. We define the vertex-colouring $c$ on $Q$ by setting $c(v)=c(v')=1$, $c(w)=c(w')=2$, and all remaining vertices are given the distinct colours $3,4,\dots,n-2$. It is easy to check that $c$ is a rainbow vertex-connected colouring for $Q$. Indeed, if $x,y\in V(Q)\setminus V(H-u)$ or $x,y\in V(Q)\setminus V(H'-u')$, then clearly $xQy$ is a vertex-rainbow path. Otherwise, we may assume that $x\in V(H-u)$ and $y\in V(H'-u')$. Then $xQy$ does not have $v$ and $w'$ as internal vertices, and thus is also vertex-rainbow. Therefore, $\overset{\rightarrow}{rvc}(Q)\le n-2$.\\[1ex]
\indent(b) The lower bound can be proved similarly by induction $q$. For $q=2$, since we have a path of length $n-1$ in $Q$, this implies $\overset{\rightarrow}{\smash{t}rc}(Q)\ge 2n-3$ by (\ref{eq}). Now let $q\ge 3$, let $H$ be an end-block of $Q$, and $Q'$ be as defined in (a). By Lemma \ref{cactuslem3}(c) and induction, we have
\begin{align*}
\overset{\rightarrow}{\smash{t}rc}(Q) &\ge\overset{\rightarrow}{\smash{t}rc}(Q')+2|V(H)|-5\\
&\ge 2|V(Q')|-3(q-1)+3+2|V(H)|-5\\
&=2n-3q+3,
\end{align*}
and the lower bound of (b) holds. 

The upper bound $\overset{\rightarrow}{\smash{t}rc}(Q)\le 2n-3$ easily follows from (\ref{rc+rvceq}), Theorem \ref{ASMBcactusthm1}(a) and part (a).
\end{proof}

We have the following corollary.
\begin{cor}\label{cactuscor1}
For every $(n,2)$-cactus $Q$, we have $\overset{\rightarrow}{rvc}(Q)=n-2$ and $\overset{\rightarrow}{\smash{t}rc}(Q)= 2n-3$.
\end{cor}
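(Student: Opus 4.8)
The plan is to obtain this statement as an immediate specialisation of Theorem \ref{cactusthm1} to the case $q=2$. First I would substitute $q=2$ into the two inequalities of part (a). The lower bound $n-2q+2$ then becomes $n-2\cdot 2+2 = n-2$, which coincides exactly with the upper bound $n-2$. Hence the two bounds pinch together and force $\overset{\rightarrow}{rvc}(Q)=n-2$.

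Next I would do the analogous substitution in part (b). Here the lower bound $2n-3q+3$ becomes $2n-3\cdot 2+3 = 2n-3$, which again coincides with the upper bound $2n-3$, so the bounds pinch together and force $\overset{\rightarrow}{\smash{t}rc}(Q)=2n-3$. Since a $(2,n)$-cactus (i.e.\ $q=2$) indeed falls under the hypotheses $q\ge 2$ of Theorem \ref{cactusthm1}, both conclusions apply directly with no further argument.

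I do not anticipate any genuine obstacle, as the corollary is a pure specialisation: the only content is the arithmetic observation that at $q=2$ the lower and upper bounds of Theorem \ref{cactusthm1} become equal. If one wished to make the write-up self-contained rather than merely citing the theorem, the single point worth recalling is \emph{why} the lower bounds are already tight at $q=2$, namely that a $(2,n)$-cactus is a pair of directed cycles $H,H'$ sharing exactly one cut-vertex $u$, so that $u_{H+}HuH'u_{H'-}$ is a directed path of length $n-1$; the bound $2\,\textup{diam}(Q)-1$ from (\ref{eq}) (applied along this path) and the corresponding rainbow vertex-connection count then reproduce the lower bounds $n-2$ and $2n-3$ without invoking the inductive step of Theorem \ref{cactusthm1}. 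But the cleanest route is simply to plug $q=2$ into the established bounds.
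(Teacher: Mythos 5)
Your proposal is correct and matches the paper's intent exactly: the corollary is stated immediately after Theorem \ref{cactusthm1} with no separate proof, precisely because substituting $q=2$ makes the lower bounds $n-2q+2$ and $2n-3q+3$ coincide with the upper bounds $n-2$ and $2n-3$. Your optional self-contained remark (the path $u_{H+}HuH'u_{H'-}$ of length $n-1$ giving the lower bounds via (\ref{eq})) is also exactly the $q=2$ base case of the paper's induction, so nothing differs beyond the harmless typo ``$(2,n)$-cactus'' for ``$(n,2)$-cactus''.
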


In the next result, we characterise the cactus digraphs that attain equality in each of the two lower bounds in Theorem \ref{cactusthm1}. It turns out that we have the same characterisation in both cases.

\begin{thm}\label{cactusthm2}
Let $Q$ be an $(n,q)$-cactus, with $q\ge 2$. Then the following are equivalent.
\begin{enumerate}
\item[(i)] $\overset{\rightarrow}{rvc}(Q)=n-2q+2$.
\item[(ii)] $\overset{\rightarrow}{\smash{t}rc}(Q)=2n-3q+3$.
\item[(iii)] For all $u,v\in K_Q$, we have $d(u,v)\ge 3$.
\end{enumerate}
\end{thm}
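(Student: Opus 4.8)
The plan is to establish the two implications (iii)$\Rightarrow$(i) and (iii)$\Rightarrow$(ii), together with the converses (i)$\Rightarrow$(iii) and (ii)$\Rightarrow$(iii); since (iii) is then equivalent to each of (i) and (ii), the three conditions are equivalent. Throughout I would use that all directed paths in $Q$ are unique (Lemma \ref{cactuslem1}(iv)), so that two vertices (or elements) may share a colour precisely when no directed path has both of them internal. Because the lower bounds $\overset{\rightarrow}{rvc}(Q)\ge n-2q+2$ and $\overset{\rightarrow}{\smash{t}rc}(Q)\ge 2n-3q+3$ are exactly Theorem \ref{cactusthm1}, each of (i) and (ii) asserts that the corresponding bound is attained, so in each case only a matching upper bound (or, for the converse, a strict improvement of the lower bound) has to be produced.

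For (iii)$\Rightarrow$(i) I would argue by induction on $q$, the base case $q=2$ being Corollary \ref{cactuscor1}. For $q\ge 3$, pick an end-block $H$ with cut-vertex $u$ and set $Q'=Q-V(H-u)$; since $K_{Q'}\subseteq K_Q$ and distances inside $Q'$ agree with those in $Q$, the cactus $Q'$ still satisfies (iii), so by induction it has an optimal rainbow vertex-colouring $c'$. I would extend $c'$ to $Q$ by giving the $|V(H)|-3$ vertices of $V(H-\{u,v,w\})$ new private colours (these are forced to be singular anyway, by Lemma \ref{cactuslem3}(a)) and by reusing colours at $v=u_{H+}$ and $w=u_{H-}$, setting $c(v)=c'(v')$ and $c(w)=c'(w')$, where $v'=u_{H'+}$ and $w'=u_{H'-}$ are the out- and in-neighbours of $u$ in a cycle $H'$ of $Q'$. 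The total number of colours is then exactly $n-2q+2$, and combined with Theorem \ref{cactusthm1}(a) this gives (i), provided the extension is rainbow vertex-connected. The verification splits into paths lying in $Q'$ (covered by $c'$), paths lying in $H$ (handled because $c'(u),c'(v'),c'(w')$ are pairwise distinct, being internal on the path $w'uv'$ inside $Q'$), and paths crossing between $H$ and $Q'$ through $u$; by Lemma \ref{cactuslem2} such a crossing path meets only one of $v,w$ internally. The crucial point — and the place where hypothesis (iii) is used — is that $d(u,u')\ge 3$ for every cut-vertex $u'$ forces the four vertices $u_+,u_{++},u_-,u_{--}$ of $Q'$ to be non-cut-vertices. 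This ``buffer'' guarantees that no directed path reaching $u$ from another cycle can pass through $v'$ or $v'_+$ (resp.\ $w'$ or $w'_-$), so that in $Q'$ the vertex $v'$ conflicts with every vertex that can be internal immediately before $u$, and $w'$ with every vertex internal immediately after $u$. Hence the reused colours $c'(v'),c'(w')$ can never coincide with a colour appearing elsewhere on a crossing path, and no path is spoilt.

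For (iii)$\Rightarrow$(ii) I would avoid a second induction and instead combine three available facts. Since $d(u,u')\ge 3>1$ for all cut-vertices, $K_Q$ is independent, so Theorem \ref{ASMBcactusthm1}(b) gives $\overset{\rightarrow}{rc}(Q)=n-q+1$; together with the just-proved (i) and inequality (\ref{rc+rvceq}) this yields $\overset{\rightarrow}{\smash{t}rc}(Q)\le\overset{\rightarrow}{rc}(Q)+\overset{\rightarrow}{rvc}(Q)=(n-q+1)+(n-2q+2)=2n-3q+3$, which matches the lower bound of Theorem \ref{cactusthm1}(b). Thus $\overset{\rightarrow}{\smash{t}rc}(Q)=2n-3q+3$, which is (ii).

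The converses I would prove in contrapositive form, showing that if some two cut-vertices satisfy $d(u,u')\le 2$ then both bounds are strict, again by induction on $q$ via Lemma \ref{cactuslem3}(c) (using Lemma \ref{cactuslem3}(b) for the total version). When an end-block can be peeled without destroying the close pair — i.e.\ keeping $u,u'\in K_{Q'}$ at the same distance — the strict inequality for $Q'$ propagates through Lemma \ref{cactuslem3}(c) and gains the extra colour. The delicate case, which I expect to be the main obstacle, is the ``absorbed'' one, in which every admissible peel turns one of $u,u'$ into a non-cut-vertex; this happens exactly when the close pair sits at an end-block, say $u$ is the cut-vertex of the end-block $H$ and $u'$ lies at distance at most $2$ in $Q'$ with $u$ becoming non-cut in $Q'$. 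Here the broken buffer must be exploited directly: because $u'$ is a cut-vertex reachable from $u$ through a detour of length at most $2$, one obtains a simple directed path threading $H$, this short detour past $u$, and a second cycle through $u'$, whose internal vertices (and, for the total version, also arcs) outnumber the respective budget by one; equivalently, the colour that the construction would reuse at $v=u_{H+}$ (or $w=u_{H-}$) is now forced to be new, since its only candidate reuse-targets are made to conflict with $v$ by that detour. This yields $\overset{\rightarrow}{rvc}(Q)\ge n-2q+3$ and $\overset{\rightarrow}{\smash{t}rc}(Q)\ge 2n-3q+4$, contradicting (i) and (ii) respectively. Combining the four implications gives (i)$\Leftrightarrow$(iii)$\Leftrightarrow$(ii), as required.
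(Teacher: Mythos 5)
Your overall architecture (induction on $q$, peeling end-blocks via Lemma \ref{cactuslem3}, deducing (ii) from (i) through Theorem \ref{ASMBcactusthm1}(b) and (\ref{rc+rvceq})) is the same as the paper's, but your inductive step for (iii)$\Rightarrow$(i) has a genuine gap: the induction hypothesis ``$Q'$ has \emph{some} optimal rainbow vertex-connected colouring $c'$'' is too weak to support your extension. The extension reuses $c'(v')$ at $v$ and $c'(w')$ at $w$, and its correctness requires in particular $c'(v')\neq c'(w')$; but when $|V(H')|\le 4$ no directed path of $Q'$ has both $v'$ and $w'$ as internal vertices (under (iii) the vertex $w'$ has only the out-arc $w'u$ and $v'$ has only the in-arc $uv'$, so such a path would have to run $w'_-\,w'\,u\,v'\,v'_+$, and in a triangle $w'_-=v'$ and $v'_+=w'$, forcing a repeated vertex), so an optimal $c'$ may legitimately give them the same colour. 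Concretely, let $Q$ consist of three copies of $\rC_3$ sharing the single vertex $u$, so that (iii) holds, and let $Q'$ be the union of the two triangles $uv'w'u$ and $uabu$. The colouring $c'(v')=c'(w')=1$, $c'(a)=c'(b)=2$, $c'(u)=3$ is rainbow vertex-connected on $Q'$ with $|V(Q')|-2=3$ colours, hence optimal; your extension then assigns $c(v)=c(w)=1$ on the third triangle $uvwu$, and the unique $v'-w$ path $v'w'uvw$ has internal vertices $w',u,v$ with colours $1,3,1$, so it is not vertex-rainbow. This is exactly where your ``buffer'' claim breaks: $w'$ \emph{can} be internal immediately before $u$, yet nothing forces $c'(v')\neq c'(w')$. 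The paper avoids this by strengthening the induction hypothesis: it fixes one explicit colouring (all in-neighbours of each cut-vertex $u_i$ receive a colour $\alpha_i$, all out-neighbours a colour $\beta_i$, all remaining vertices distinct colours), notes that its restriction to $Q'$ has the same form, and proves by induction that this particular colouring is rainbow vertex-connected. Your proof needs the same repair; reusing colours from an arbitrary optimal colouring cannot work.

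The converse direction also needs tightening. You rightly anticipate an ``absorbed'' case in which peeling any end-block destroys the close pair, but you neither delimit when it occurs nor give a sound argument there: your justification speaks of the colour ``the construction would reuse'' being forced to be new, which is reasoning about one particular colouring and cannot yield a lower bound valid for all colourings. In the paper this case is shown to arise only for $q=3$: for $q\ge 4$, with $H$ the unique cycle containing both close cut-vertices, one can always choose an end-block inside a branch $H(u)$, $H(v)$, or $H(w)$ (for a third cut-vertex $w$ on $H$) of $Q-A(H)$ whose removal keeps the close pair in $K_{Q'}$ at the same distance; the base case $q=3$ is then settled directly by exhibiting a path of length at least $n-2$, giving $\overset{\rightarrow}{rvc}(Q)\ge n-3>n-2q+2$ and $\overset{\rightarrow}{\smash{t}rc}(Q)\ge 2n-5>2n-3q+3$. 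Your steps (iii)$\Rightarrow$(ii) and the propagation of strict inequalities through Lemma \ref{cactuslem3}(c) do match the paper and are fine.
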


\begin{proof}
We first prove that (iii) $\Rightarrow$ (i). We use induction on $q$ to show that if $Q$ is an $(n,q)$-cactus and $d(u,v)\ge 3$ for all $u,v\in K_Q$, then we have $\overset{\rightarrow}{rvc}(Q)=n-2q+2$. For any such cactus $Q$, we may define a vertex-colouring $c$ as follows. Let $K_Q=\{u_1,\dots,u_p\}$ for some $1\le p\le q-1$. Note that for every $i\neq j$, we have $\Gamma[u_i]\cap\Gamma[u_j]=\emptyset$. Now, for $1\le i\le p$, set $c(w)=\alpha_i$ for all $w\in\Gamma^-(u_i)$, and $c(w)=\beta_i$ for all $w\in\Gamma^+(u_i)$, where the $\alpha_i$ and $\beta_i$ are some colours. Assign further distinct colours to all remaining vertices. Observe that every cycle of $Q$ is rainbow coloured.

We use induction on $q$ to show that $c$ is a rainbow vertex-connected colouring for $Q$ with $n-2q+2$ colours. The assertion holds for $q=2$, since this is included in the proof of the upper bound in Theorem \ref{cactusthm1}(a). Now suppose that $q\ge 3$ and the assertion holds for $q-1$. Let $c$ be the vertex-colouring of $Q$ as described. Let $H$ be an end-block of $Q$, let $u$ be the unique cut-vertex of $Q$ in $H$, and let $Q'=Q-V(H-u)$. Let $c'$ be the vertex-colouring of $Q'$ when $c$ is restricted to $Q'$. Then obviously we have $d_{Q'}(u,v)\ge 3$ for all $u,v\in K_{Q'}$, and it is easy to see that $c'$ is the vertex-colouring for $Q'$ of the type as described. Thus by induction, $c'$ is a rainbow vertex-connected colouring of $Q'$, using $|V(Q')|-2(q-1)+2$ colours. This means that the number of colours used by $c$ is $|V(Q')|-2(q-1)+2+|V(H)|-3=n-2q+2$, since $|V(Q')|+|V(H)|=n+1$. It remains to show that $c$ is a rainbow vertex-connected colouring for $Q$. Let $c(w)=\alpha$ for all $w\in\Gamma^-(u)$, and $c(w)=\beta$ for all $w\in\Gamma^+(u)$. Let $x,y\in V(Q)$. Then the unique $x-y$ path is vertex-rainbow if $x,y\in V(Q')$ (by induction), or $x,y\in V(H)$ (since $H$ is rainbow coloured by $c$). Now let $x\in V(Q'-u)$ and $y\in V(H-u)$. Then the unique $x-y$ path has the form $xQ'uHy$. By induction, the path $xQ'u$ is vertex-rainbow, and furthermore, by the definition of $c$, the vertices of $xQ'u-x$ have distinct colours and do not use the colour $\beta$. Since the path $uHy$ does not have an internal vertex with colour $\alpha$, it follows, again from the definition of $c$, that $xQ'uHy$ is a vertex-rainbow path. A very similar argument can be used for the case $x\in V(H-u)$ and $y\in V(Q'-u)$. Therefore by induction, $c$ is a rainbow vertex-connected colouring of $Q$ with $n-2q+2$ colours. Hence, $\overset{\rightarrow}{rvc}(Q)\le n-2q+2$, and $\overset{\rightarrow}{rvc}(Q)=n-2q+2$ by Theorem \ref{cactusthm1}(a). This completes the proof that (iii) $\Rightarrow$ (i).

Next, we prove that (i) $\Rightarrow$ (iii), and (ii) $\Rightarrow$ (iii), by using similar arguments for both implications. We use induction on $q$ to show that if $\overset{\rightarrow}{rvc}(Q)=n-2q+2$ (resp.~$\overset{\rightarrow}{\smash{t}rc}(Q)=2n-3q+3$), then $d(u,v)\ge 3$ for all $u,v\in K_Q$. The assertions trivially hold for $q=2$, since we have $\overset{\rightarrow}{rvc}(Q)=n-2$ (resp.~$\overset{\rightarrow}{\smash{t}rc}(Q)=2n-3$) by Corollary \ref{cactuscor1}, and $Q$ has only one cut-vertex. Next, we prove the assertion for $q=3$. Suppose that $Q$ has two cut-vertices $u,v$ with $d(u,v)\in\{1,2\}$. Then $Q$ consists of three cycles $H,H',H''$ where $V(H)\cap V(H')=\{u\}$, $V(H)\cap V(H'')=\{v\}$, $V(H')\cap V(H'')=\emptyset$, and $d_H(u,v)\in\{1,2\}$. The unique $v_{H''+}-u_{H'-}$ path in $Q$ is $v_{H''+}H''vHuH'u_{H'-}$ and has length at least $n-2$, so that $\overset{\rightarrow}{rvc}(Q)\ge n-3$ (resp.~$\overset{\rightarrow}{\smash{t}rc}(Q)\ge 2n-5$, by (\ref{eq})). Thus the assertions hold for $q=3$.

Now let $q\ge 4$, and suppose that the assertions hold for $q-1$. Suppose that there exist $u,v\in K_Q$ with $d(u,v)\in\{1,2\}$. Note first that there is a unique cycle $H$ containing both $u$ and $v$. Indeed, if $d(u,v)=1$, then $H$ is the unique cycle containing the arc $uv$. If $d(u,v)=2$, then we may assume that the unique $u-v$ path in $Q$ is $uwv$, where $w\not\in K_Q$ (if $w\in K_Q$, then we could consider $u,w$ instead of $u,v$). Then $H$ is the unique cycle containing the arc $uw$, and $H$ also contains $v$. Next, note that the set of the remaining cycles of $Q$ can be partitioned so that each part forms a ``branch of cycles attached to $H$''. More formally, for $z\in V(H)$, let $H(z)$ be the component of $Q-A(H)$ that contains $z$. Thus the subdigraphs $H(z)$ are the ``branches'', and every cycle other than $H$ belongs to exactly one of the $H(z)$. Now, we choose an end-block $H'$ of $Q$ as follows. If $H(u)$ has at least two cycles, then we let $H'$ be an end-block of $Q$ in $H(u)$, and likewise if $H(v)$ has at least two cycles. Otherwise, both $H(u)$ and $H(v)$ must have exactly one cycle, and since $q\ge 4$, there exists a vertex $w\in K_Q\setminus\{u,v\}$ in $H$. We let $H'$ be an end-block of $Q$ in $H(w)$. Now, let $x$ be the only cut-vertex of $Q$ in $H'$, and $Q'=Q-V(H'-x)$. By the choice of $H'$, we see that $u,v\in K_{Q'}$ and $d_{Q'}(u,v)\in\{1,2\}$.

For the rainbow vertex-connection, we have $\overset{\rightarrow}{rvc}(Q')\ge |V(Q')|-2(q-1)+3$ by induction. Therefore by Lemma \ref{cactuslem3}(c), we have
\begin{align*}
\overset{\rightarrow}{rvc}(Q) &\ge\overset{\rightarrow}{rvc}(Q')+|V(H')|-3\\
&\ge |V(Q')|-2(q-1)+3+|V(H')|-3\\
&=n-2q+3,
\end{align*}
since $|V(Q')|+|V(H')|=n+1$. By induction, we have proved that (i) $\Rightarrow$ (iii).

Similarly, for the total rainbow connection, we have $\overset{\rightarrow}{\smash{t}rc}(Q')\ge 2|V(Q')|-3(q-1)+4$ by induction. Therefore by Lemma \ref{cactuslem3}(c), we have
\begin{align*}
\overset{\rightarrow}{\smash{t}rc}(Q) &\ge\overset{\rightarrow}{\smash{t}rc}(Q')+2|V(H')|-5\\
&\ge 2|V(Q')|-3(q-1)+4+2|V(H')|-5\\
&=2n-3q+4,
\end{align*}
By induction, we have proved that (ii) $\Rightarrow$ (iii).

Finally, we can now easily show that (i) $\Rightarrow$ (ii), and this would mean that the three conditions are equivalent. Suppose that $\overset{\rightarrow}{rvc}(Q)=n-2q+2$. Since we know that (i) $\Rightarrow$ (iii), we have $d(u,v)\ge 3$ for all $u,v\in K_Q$. In particular, $K_Q$ is independent. By Theorem \ref{ASMBcactusthm1}(b), we have $\overset{\rightarrow}{rc}(Q)=n-q+1$. Thus by (\ref{rc+rvceq}), we have $\overset{\rightarrow}{\smash{t}rc}(Q) \le\overset{\rightarrow}{rc}(Q)+\overset{\rightarrow}{rvc}(Q)=2n-3q+3$. Therefore, $\overset{\rightarrow}{\smash{t}rc}(Q) =2n-3q+3$ by Theorem \ref{cactusthm1}(b), and (ii) holds.
\end{proof}

Similarly in the next result, we characterise the cactus digraphs that attain equality in each of the two upper bounds in Theorem \ref{cactusthm1}.

\begin{thm}\label{cactusthm3}
Let $Q$ be an $(n,q)$-cactus, with $q\ge 2$. Then the following are equivalent.
\begin{enumerate}
\item[(i)] $\overset{\rightarrow}{rvc}(Q)=n-2$.
\item[(ii)] $\overset{\rightarrow}{\smash{t}rc}(Q)=2n-3$.
\item[(iii)] $B(Q)=P_q$ and $Q[K_Q]=\rP_{q-1}$.
\end{enumerate}
\end{thm}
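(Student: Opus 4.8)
The plan is to first recast condition (iii) in a form convenient for lower bounds. I claim that for an $(n,q)$-cactus $Q$ with $q\ge 2$, condition (iii) holds if and only if $Q$ contains a Hamiltonian directed path (a directed path on all $n$ vertices). Indeed, $B(Q)=P_q$ means the blocks $B_1,\dots,B_q$ form a chain meeting at cut-vertices $c_1,\dots,c_{q-1}$ with $c_m=B_m\cap B_{m+1}$, and the only possible arcs among cut-vertices join consecutive ones inside the middle blocks; so $Q[K_Q]=\rP_{q-1}$ says exactly that in each middle block the two cut-vertices are adjacent and consistently oriented, which is precisely what lets one traverse each block completely while passing from one cut-vertex to the next, producing a Hamiltonian directed path. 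Conversely, such a path is the unique directed path between its endpoints by Lemma \ref{cactuslem1}(iv), and covering all $n$ vertices forces both $B(Q)=P_q$ and the cut-vertices to form $\rP_{q-1}$ (a path entering a block at a cut-vertex and covering it must leave at the adjacent cut-vertex). I also note that by Theorem \ref{ASMBcactusthm1}(c), (iii) is equivalent to $\overset{\rightarrow}{rc}(Q)=n-1$. The base case $q=2$ is settled by Corollary \ref{cactuscor1} (all three conditions then hold), so I would assume $q\ge 3$ throughout.

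Granting the reformulation, I would establish the implications (iii)$\Rightarrow$(i), (iii)$\Rightarrow$(ii), (ii)$\Rightarrow$(iii), and (i)$\Rightarrow$(iii), which together give all equivalences. For (iii)$\Rightarrow$(i) and (iii)$\Rightarrow$(ii): a Hamiltonian directed path $P$ has $n-2$ internal vertices and $n-1$ arcs, and being the unique path between its endpoints it must be vertex-rainbow (resp.\ total-rainbow) under any rainbow vertex-connected (resp.\ total rainbow connected) colouring; hence $\overset{\rightarrow}{rvc}(Q)\ge n-2$ and $\overset{\rightarrow}{\smash{t}rc}(Q)\ge 2n-3$, and these combine with the upper bounds of Theorem \ref{cactusthm1} to give equality. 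For (ii)$\Rightarrow$(iii) I argue the contrapositive: if (iii) fails then $\overset{\rightarrow}{rc}(Q)\le n-2$ by Theorem \ref{ASMBcactusthm1}(c), while $\overset{\rightarrow}{rvc}(Q)\le n-2$ always by Theorem \ref{cactusthm1}(a), so (\ref{rc+rvceq}) gives $\overset{\rightarrow}{\smash{t}rc}(Q)\le\overset{\rightarrow}{rc}(Q)+\overset{\rightarrow}{rvc}(Q)\le 2n-4$, whence (ii) fails.

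The substantial step is (i)$\Rightarrow$(iii), again by contraposition: assuming (iii) fails, I construct a rainbow vertex-connected colouring using only $n-3$ colours. The guiding principle (in the spirit of Lemma \ref{cactuslem3}) is that two vertices may receive the same colour precisely when they are never simultaneously internal vertices of one directed path; for an end-block with cut-vertex $u$, the out-neighbour of $u$ can be internal only to paths ending in that block, and the in-neighbour only to paths starting there. The colouring of Theorem \ref{cactusthm1}(a) already merges the two out-neighbours and the two in-neighbours coming from two end-blocks, and I need one more merge. If $B(Q)\ne P_q$ then $Q$ has at least three end-blocks, and merging the out-neighbours of all of them into one colour and the in-neighbours into another gives at most $n-4$ colours. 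If $B(Q)=P_q$ but $Q[K_Q]\ne\rP_{q-1}$, then some middle block has its two cut-vertices non-adjacent, or some interior cut-vertex $c_m$ is a source or sink of $Q[K_Q]$; in either situation I exhibit middle-block vertices $p,p'$ so that $p$ is never internal together with the out-neighbour $v$ of one end-block and $p'$ never internal together with the out-neighbour $v'$ of the other. The key point is that $p$ is chosen to be the out-neighbour of a cut-vertex inside a block lying on the side of that cut-vertex \emph{away} from the end-block in question, so that once a path uses the arc into $p$ it has spent that cut-vertex and can no longer reach the end-block (here the separation property of cut-vertices, Lemma \ref{cactuslem2}, is essential). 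Colouring $\{v,p\}$, $\{v',p'\}$, and $\{w,w'\}$ each monochromatically, and using the arc-reversal symmetry of $\overset{\rightarrow}{rvc}$ to reduce the source case to the sink case, yields a rainbow vertex-connected colouring with $n-3$ colours, so (i) fails.

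The main obstacle is this final case analysis. The delicate issue is that a directed path may wrap all the way around a block, passing through one cut-vertex and continuing past it, so one cannot naively assume that a path entering a middle block exits at a prescribed cut-vertex. Verifying that the chosen vertices $p,p'$ truly cannot co-occur as internal vertices of the critical paths therefore requires carefully tracking which cut-vertex is consumed when a path enters a block, and hence which end-blocks become unreachable thereafter; this bookkeeping, rather than any single computation, is where the proof must be done with care.
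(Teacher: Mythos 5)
Your proposal is correct, and for the substantial implication it takes a genuinely different route from the paper. Where you agree: your (iii) $\Rightarrow$ (i), (ii) via the Hamiltonian directed path is exactly the paper's argument for (iii) $\Rightarrow$ (ii), and your contrapositive (ii) $\Rightarrow$ (iii) using Theorem \ref{ASMBcactusthm1}(c) is a minor variant of the paper's (ii) $\Rightarrow$ (i), which uses only $\overset{\rightarrow}{rc}(Q)\le n-1$ and (\ref{rc+rvceq}). The divergence is (i) $\Rightarrow$ (iii): the paper proceeds by induction on $q$ --- base cases $q=2$ (Corollary \ref{cactuscor1}) and $q=3$ (split according to Theorem \ref{cactusthm2}, plus one explicit colouring), then for $q\ge 4$ it removes an end-block $H$ with cut-vertex $u$, observes that if $Q'=Q-V(H-u)$ were not a special path cactus then an optimal colouring of $Q'$ extended by $|V(H)|-1$ fresh colours would use at most $n-3$ colours, and finishes with a structural argument about where $H$ can attach to the special path cactus $Q'$. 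You avoid induction entirely by a direct case analysis of how (iii) fails: when $B(Q)\neq P_q$ there are at least three end-blocks (your block-cut-tree observation is what justifies this) and merging the out-neighbours, and separately the in-neighbours, of all end-block cut-vertices already saves two colours; when $B(Q)=P_q$ but $Q[K_Q]\neq\rP_{q-1}$, the failure itself --- a non-adjacent consecutive pair of cut-vertices, or an interior sink of $Q[K_Q]$ (sources by arc reversal) --- is exactly what makes your vertices $p,p'$ non-cut-vertices, so each has a unique in-arc emanating from a cut-vertex lying on the far side from the relevant end-block, and the separation property of Lemma \ref{cactuslem2} then kills any path containing both members of a merged pair as internal vertices. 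Both arguments yield $\overset{\rightarrow}{rvc}(Q)\le n-3$; the paper's induction is shorter because it recycles Lemma \ref{cactuslem3} and Theorem \ref{cactusthm2}, while your construction is self-contained, needs neither of those, and makes visible exactly where the saved colour comes from. One point you must state explicitly in a full write-up: everything hinges on $p$ and $p'$ not being cut-vertices (a cut-vertex has in-arcs in two blocks, so ``the arc into $p$'' would not be unique and a path could reach $p$ without spending the intended cut-vertex); this is guaranteed precisely by non-adjacency in the first sub-case and by the sink orientation in the second, and it is the reason your dichotomy is the right one to distinguish.
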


It is easy to see (as shown in \cite{AM2017}) that an equivalent way of saying $B(Q)=P_q$ and $Q[K_Q]=\rP_{q-1}$ is that $Q$ has the following structure. $Q$ consists of $q$ cycles $L_1,\dots,L_q$ and $q-1$ cut-vertices $u_1,\dots,u_{q-1}$ such that, $K_Q\cap V(L_1)=\{u_1\}$, $K_Q\cap V(L_q)=\{u_{q-1}\}$, and $K_Q\cap V(L_i)=\{u_{i-1},u_i\}$ for $2\le i\le q-1$. Moreover, $u_1u_2\cdots u_{q-1}$ is a directed path in $Q$. We shall identify $Q$ with this notation and say that such a cactus is a \emph{special path cactus}.

\begin{proof}[Proof of Theorem \ref{cactusthm3}]
(iii) $\Rightarrow$ (ii). Suppose that $B(Q)=P_q$ and $Q[K_Q]=\rP_{q-1}$. In other words, $Q$ is a special path cactus, with the notation as described above. Let $v$ be the out-neighbour of $u_{q-1}$ in $L_q$, and $w$ be the in-neighbour of $u_1$ in $L_1$. Then $vQw$ is a path of length $n-1$, and thus $\overset{\rightarrow}{\smash{t}rc}(Q)\ge 2n-3$ by (\ref{eq}). We have $\overset{\rightarrow}{\smash{t}rc}(Q)=2n-3$ by Theorem \ref{cactusthm1}(b).

(ii) $\Rightarrow$ (i). Let $\overset{\rightarrow}{\smash{t}rc}(Q)=2n-3$. By Theorem \ref{ASMBcactusthm1}(a), we have  $\overset{\rightarrow}{rc}(Q)\le n-1$. By (\ref{rc+rvceq}), we have $\overset{\rightarrow}{rvc}(Q)\ge n-2$. We have $\overset{\rightarrow}{rvc}(Q)=n-2$ by Theorem \ref{cactusthm1}(a).

(i) $\Rightarrow$ (iii). We use induction on $q$ to show that if $\overset{\rightarrow}{rvc}(Q)=n-2$, then $Q$ is a special path cactus. The case $q=2$ holds, since $\overset{\rightarrow}{rvc}(Q)=n-2$ by Corollary \ref{cactuscor1} and $Q$ is always a special path cactus. Next, we consider the case $q=3$. Then $Q$ consists of three cycles $H,H',H''$. Suppose that $Q$ is not a special path cactus. Then either $Q$ has one cut-vertex, or two cut-vertices $u,v$ with $d(u,v)\neq 1$ and $d(v,u)\neq 1$. If we have the former, or the latter with $d(u,v)\ge 3$ and $d(v,u)\ge 3$, then by Theorem \ref{cactusthm2}, we have $\overset{\rightarrow}{rvc}(Q)=n-4$. Otherwise, we may assume that $V(H)\cap V(H')=\{u\}$, $V(H)\cap V(H'')=\{v\}$, $V(H')\cap V(H'')=\emptyset$, $d(u,v)=2$ and $d(v,u)\ge 2$. Define the vertex-colouring $c$ on $Q$ where $c(w)=1$ if $w\in\Gamma^+(u)$, $c(w)=2$ if $w\in\Gamma^+(v)$, $c(w)=3$ if $w\in\{u_{H'-},v_{H''-}\}$, and all remaining vertices are given further distinct colours. It is easy to check that $c$ is a rainbow vertex-connected colouring for $Q$ with $n-3$ colours, and thus $\overset{\rightarrow}{rvc}(Q)\le n-3$. In fact, we have $\overset{\rightarrow}{rvc}(Q)=n-3$ by Theorems \ref{cactusthm1}(a) and \ref{cactusthm2}.

Now, let $q\ge 4$, and suppose that the assertion holds for $q-1$. Let $H$ be an end-block of $Q$, let $u$ be the only cut-vertex of $Q$ in $H$, and $Q'=Q-V(H-u)$. If $Q'$ is not a special path cactus, then by induction and Theorem \ref{cactusthm1}(a), we have $\overset{\rightarrow}{rvc}(Q')\le |V(Q')|-3$. We have a rainbow vertex-connected colouring for $Q$, using $\overset{\rightarrow}{rvc}(Q')+|V(H)|-1\le |V(Q')|-3+|V(H)|-1=n-3$ colours, since $|V(Q')|+|V(H)|=n+1$. Therefore, $Q'$ must be a special path cactus with at least three cycles. Using the prescribed notation, the end-blocks of $Q'$ are $L_1$ and $L_{q-1}$. Suppose that $V(H)\cap V(L_1)=\emptyset$. Then we can apply the same argument with $L_1$ instead of $H$, and conclude that $Q-V(L_1-u_1)$ is a special path cactus, and we have $u$ is the out-neighbour of $u_{q-2}$ in $L_{q-1}$. Thus $Q$ is a special path cactus. A similar argument holds if $V(H)\cap V(L_{q-1})=\emptyset$, where we would apply the same argument with $L_{q-1}$ instead of $H$.
\end{proof}

Finally in the following result, we see that, as in the case for $\overset{\rightarrow}{rc}(Q)$, every value in the range in Theorem \ref{cactusthm1}(a) can be attained by $\overset{\rightarrow}{rvc}(Q)$. However, the same is not quite true for $\overset{\rightarrow}{\smash{t}rc}(Q)$ with respect to the range in Theorem \ref{cactusthm1}(b). We will see that $\overset{\rightarrow}{\smash{t}rc}(Q)$ can attain every value in the range, except for the value of $2n-4$.

\begin{thm}\label{cactusthm4}
Let $q\ge 2$.
\begin{enumerate}
\item[(a)] Let $2\le k\le 2q-2$. For every $n$ where
\[
n\ge
\left\{
\begin{array}{ll}
2q+1 & \textup{\emph{if} $k$ \emph{is even,}}\\
2q+2 & \textup{\emph{if} $k$ \emph{is odd,}}
\end{array}
\right.
\]
there is an $(n,q)$-cactus $Q$ with $\overset{\rightarrow}{rvc}(Q)=n-2q+k$.
\item[(b)] Let $3\le k\le 3q-3$ with $k\neq 3q-4$. For every $n$ where
\[
n\ge
\left\{
\begin{array}{ll}
2q+1 & \textup{\emph{if} $k\equiv 0$ (mod $3$)\emph{,}}\\
2q+2 & \textup{\emph{if} $k\equiv 1$ (mod $3$)\emph{,}}\\
2q+3 & \textup{\emph{if} $k\equiv 2$ (mod $3$)\emph{,}}
\end{array}
\right.
\]
there is an $(n,q)$-cactus $Q$ with $\overset{\rightarrow}{\smash{t}rc}(Q)=2n-3q+k$.
\item[(c)] For every $(n,q)$-cactus $Q$, we have $\overset{\rightarrow}{\smash{t}rc}(Q)\neq 2n-4$.
\end{enumerate}
\end{thm}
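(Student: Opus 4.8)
The plan is to dispose of part (c) first, since its proof is short and simultaneously explains why $2n-4$ is the exceptional value. The two upper bounds $\overset{\rightarrow}{rc}(Q)\le n-1$ (Theorem~\ref{ASMBcactusthm1}(a)) and $\overset{\rightarrow}{rvc}(Q)\le n-2$ (Theorem~\ref{cactusthm1}(a)) are each attained \emph{only} when $Q$ is a special path cactus: the first by Theorem~\ref{ASMBcactusthm1}(c), the second by Theorem~\ref{cactusthm3}. Hence if $Q$ is \emph{not} a special path cactus, then both bounds drop, giving $\overset{\rightarrow}{rc}(Q)\le n-2$ and $\overset{\rightarrow}{rvc}(Q)\le n-3$, so by (\ref{rc+rvceq}) we get $\overset{\rightarrow}{\smash{t}rc}(Q)\le(n-2)+(n-3)=2n-5$. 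If instead $Q$ \emph{is} a special path cactus, then $\overset{\rightarrow}{\smash{t}rc}(Q)=2n-3$ by Theorem~\ref{cactusthm3}. Either way $\overset{\rightarrow}{\smash{t}rc}(Q)\neq 2n-4$, which is exactly (c); it also shows that in (b) the value $k=3q-4$ is genuinely skipped.

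For (a) and (b) I would build explicit tunable families and compute both bounds. The guiding principle is to read $\overset{\rightarrow}{rvc}(Q)$ (resp.\ $\overset{\rightarrow}{\smash{t}rc}(Q)$) as $n$ (resp.\ $n+m=2n+q-1$) minus a \emph{saving}, where around each cut-vertex $u$ lying on $t$ cycles one may merge the $t$ in-neighbours into one colour and the $t$ out-neighbours into another. When the closed neighbourhoods $\Gamma[u]$ are pairwise disjoint (equivalently $d(u,v)\ge 3$ for distinct $u,v\in K_Q$, the situation of Theorem~\ref{cactusthm2}) these merges are independent and the total saving is maximal, recovering the minima $n-2q+2$ and $2n-3q+3$. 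To realise larger values I would use path cacti $L_1,\dots,L_q$ and prescribe the distances between consecutive cut-vertices: forcing a pair to be adjacent, or at distance $2$, destroys part of the local saving in a controlled amount, and choosing how many links are ``close'' lets the saving run through every admissible integer. For each constructed $Q$ the lower bound comes from the peeling induction of Lemma~\ref{cactuslem3}(c) (as in Theorems~\ref{cactusthm1} and \ref{cactusthm2}), and the matching upper bound from an explicit colouring of the type used there; for (b) one can instead combine an $\overset{\rightarrow}{rc}$-colouring from Theorem~\ref{ASMBcactusthm1} with an $\overset{\rightarrow}{rvc}$-colouring from part (a) via (\ref{rc+rvceq}).

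The arithmetic conditions on $n$ should fall out of the block-length constraint $|V(L_i)|\ge 3$. For instance in (a) with every block a triangle ($n=2q+1$), a ``close'' link with $d(u,v)=2$ inside a triangle forces $d(v,u)=1$, i.e.\ a special-path link; to obtain a genuine distance-$2$ link (which lowers the saving by an odd amount) one needs a block of length $\ge 4$, hence $n\ge 2q+2$ when $k$ is odd. The analogous count for (b), where each cut-vertex configuration now also controls arc-savings, produces the three residue classes modulo $3$, realised by inserting blocks of length $4$ and $5$ to shift $n$ by the required amount while keeping the target value of $k$ fixed.

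The main obstacle, and the part most in need of care, is verifying that these families hit \emph{every} admissible value at the stated minimal $n$: I must show that the local savings at distinct cut-vertices really are additive, that bringing one pair of cut-vertices close lowers the saving by precisely the predicted amount, and that the parity (resp.\ mod-$3$) of the attainable totals is controlled exactly by the block lengths. Reconciling this increment bookkeeping with the boundary value $k=3q-4$ in (b) --- reached from above only down to $2n-5$ by part (c), and from below by the constructions --- is where I expect the delicate case analysis to lie.
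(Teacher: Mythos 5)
Your part (c) is correct and is precisely the paper's argument: special path cacti give $2n-3$ by Theorem~\ref{cactusthm3}, and all other cacti satisfy $\overset{\rightarrow}{rc}(Q)\le n-2$ and $\overset{\rightarrow}{rvc}(Q)\le n-3$, whence $\overset{\rightarrow}{\smash{t}rc}(Q)\le 2n-5$ by (\ref{rc+rvceq}). Parts (a) and (b), however, are a plan rather than a proof: no construction is fixed, no colouring is exhibited, and no lower bound is computed --- you yourself defer exactly the verifications (additivity of savings, the effect of each ``close'' link, the parity and mod-$3$ bookkeeping) that constitute the theorem. Worse, the family you propose has a concrete defect. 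A path cactus (block graph $P_q$) with $q\ge 3$ has at least two cut-vertices, and at the minimal order $n=2q+1$ every block is a triangle, so two consecutive cut-vertices lie on a common triangle and are at distance at most $2$ in one direction; by Theorem~\ref{cactusthm2} this forces $\overset{\rightarrow}{rvc}(Q)>n-2q+2$, so your family cannot realise $k=2$ at $n=2q+1$. The paper avoids this by \emph{not} using a path cactus: its tunable example $Q_{n,q,\ell}$ consists of a chain of $\ell$ triangles (which drives the value up) together with a bouquet of $q-\ell-1$ triangles and one long cycle $\rC_{n-2q+2}$ all attached at a single vertex $u$ (which keeps the rest of the cactus at the minimum cost); for $k=2$ one takes $\ell=1$, leaving a single cut-vertex.

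A second substantive gap is the lower-bound mechanism. Peeling via Lemma~\ref{cactuslem3}(c) contributes only $|V(H)|-3=1$ (resp.\ $2|V(H)|-5=1$) per peeled triangle, so it can only certify the \emph{minimum}-type bounds $n-2q+2$ and $2n-3q+3$; it cannot separate the intermediate values of $k$. The paper's lower bounds instead rest on the long-path/diameter inequality (\ref{eq}) applied to the chain-plus-long-cycle subcactus $Q'$ (giving $2\,\textup{diam}-1$ there), supplemented by Lemma~\ref{cactuslem3}(b) to show the $q-\ell-1$ bouquet arcs not incident with $u$ are singularly coloured, yielding $\overset{\rightarrow}{\smash{t}rc}(Q)\ge\overset{\rightarrow}{\smash{t}rc}(Q')+q-\ell-1$. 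Finally, your suggested shortcut for the upper bound in (b) --- combining an $\overset{\rightarrow}{rc}$-colouring with an $\overset{\rightarrow}{rvc}$-colouring through (\ref{rc+rvceq}) --- needs the exact value of $\overset{\rightarrow}{rc}$ for the specific construction (it would have to equal $n-q+\ell$), which Theorem~\ref{ASMBcactusthm1} does not supply for intermediate configurations; the paper sidesteps this by writing down explicit total colourings and counting their colours directly.
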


\begin{proof}
For (a) and (b), we construct an $(n,q)$-cactus $Q_{n,q,\ell}$ as follows. For some $1\le \ell\le q-1$, we take $\ell$ copies of $\rC_3$, say $H_1,\dots,H_\ell$, and amalgamate them linearly. That is, we may let $H_1=uv_1w_1u$ and $H_i=v_{i-1}v_iw_iv_{i-1}$ for $2\le i\le \ell$. Then, we attach one copy of $\rC_{n-2q+2}$, say $H_{\ell+1}$, and $q-\ell-1$ further copies of $\rC_3$, say $H_{\ell+2},\dots,H_q$, at the same vertex $u$. That is, we identify one vertex from each of $H_{\ell+1},\dots,H_q$ with $u$. Let $Q_{n,q,\ell}$ be the resulting $(n,q)$-cactus, and note that $Q_{n,q,\ell}$ can be constructed for $n\ge 2q+1$. We define the path
\begin{equation}
P=v_\ell w_\ell v_{\ell-1}w_{\ell-1}\cdots v_1w_1uH_{\ell+1}u_-\label{cactusthm4eq1}
\end{equation} 

(a) Suppose first that $k=2\ell$ is even, and note that $1\le\ell\le q-1$. Let $Q=Q_{n,q,\ell}$. The path $P$ as defined in (\ref{cactusthm4eq1}) has length $n-2q+k+1$. Thus $\overset{\rightarrow}{rvc}(Q)\ge n-2q+k$. Now, consider the vertex-colouring $c$ of $Q$ where $c(z)=1$ if $z=v_\ell$ or $z\in\Gamma^+(u)\setminus\{v_1\}$; $c(z)=2$ if $z=w_\ell$ or $z\in\Gamma^-(u)\setminus\{w_1\}$; and the remaining vertices are given further distinct colours. It is easy to check that $c$ is a rainbow vertex-connected colouring for $Q$ using $n-2q+k$ colours, since no path can have two internal vertices with both having colour $1$, or both having colour $2$. Thus, $\overset{\rightarrow}{rvc}(Q)\le n-2q+k$, and $\overset{\rightarrow}{rvc}(Q)=n-2q+k$ as required.

Now, let $k=2\ell-1$ be odd, and note that $2\le\ell\le q-1$. We take $Q_{n,q,\ell}$ and replace the arc $uv_1$ with the path $uxv_1$, and $H_{\ell+1}$ with a copy of $\rC_{n-2q+1}$. Let $Q$ be the resulting cactus, and note that $Q$ can be constructed for $n\ge 2q+2$. Now the path $P$ as in (\ref{cactusthm4eq1}) has length $n-2q+k+1$, so that $\overset{\rightarrow}{rvc}(Q)\ge n-2q+k$. We consider the vertex-colouring $c$ of $Q$ where $c(z)=1$ if $z\in\Gamma^+(u)$; $c(z)=2$ if $z=w_\ell$ or $z\in\Gamma^-(u)\setminus\{w_1\}$; $c(v_\ell)=c(w_{\ell-1})=3$; and the remaining vertices are given further distinct colours. Again, it is easy to check that $c$ is a rainbow vertex-connected colouring for $Q$ using $n-2q+k$ colours. Thus, $\overset{\rightarrow}{rvc}(Q)\le n-2q+k$, and $\overset{\rightarrow}{rvc}(Q)=n-2q+k$ as required.\\[1ex]
\indent(b) For the case $k\equiv 0$ (mod $3$), let $k=3\ell$ and $Q=Q_{n,k,\ell}$. We have $1\le\ell\le q-1$. Let $Q'=\bigcup_{i=1}^{\ell+1} H_i$. Then 
\begin{equation}
\overset{\rightarrow}{\smash{t}rc}(Q)\ge\overset{\rightarrow}{\smash{t}rc}(Q')+q-\ell-1.\label{cactusthm4eq2}
\end{equation}
Indeed, (\ref{cactusthm4eq2}) can be seen with a similar argument as in the proof of Lemma \ref{cactuslem3}(c). If we have a total rainbow connected colouring $c$ for $Q$, then $Q'$ must use at least $\overset{\rightarrow}{\smash{t}rc}(Q')$ colours. Furthermore, by Lemma \ref{cactuslem3}(b), the $q-\ell-1$ arcs in $H_{\ell+2},\dots, H_q$ not incident with $u$ must be singularly coloured by $c$, and thus they provide an additional $q-\ell-1$ colours for $c$. Now, the path $P$ in (\ref{cactusthm4eq1}) has length $n-2q+2\ell+1$ in $Q'$, so that by (\ref{eq}), we have $\overset{\rightarrow}{\smash{t}rc}(Q')\ge 2(n-2q+2\ell+1)-1$, and thus
\[
\overset{\rightarrow}{\smash{t}rc}(Q)\ge [2(n-2q+2\ell+1)-1]+q-\ell-1=2n-3q+k.
\]

Now we define a total-colouring $c$ of $Q$ as follows. Let $c(v_iw_i)=c(v_iv_{i+1})=i$ for $1\le i\le\ell-1$; $c(uz)=\ell$ if $z\in\Gamma^+(u)$; $c(zu)=\ell+1$ if $z\in\Gamma^-(u)$; $c(z)=c(v_\ell)=\ell+2$ if $z\in\Gamma^+(u)\setminus\{v_1\}$; and $c(z)=c(w_\ell)=\ell+3$ if $z\in\Gamma^-(u)\setminus\{w_1\}$. All remaining arcs and vertices are given further distinct colours. Then, since the colours $1,\dots,\ell-1$ are each used twice, and the colours $\ell,\ell+1,\ell+2,\ell+3$ are each used $q-\ell+1$ times, and $|V(Q)\cup A(Q)|=2n+q-1$, we have that $c$ uses $2n+q-1-(\ell-1)-4(q-\ell)=2n-3q+k$ colours. Moreover, it is easy to check that $c$ is a total rainbow connected colouring for $Q$, since no path can have two arcs or internal vertices with the same colour. Thus, $\overset{\rightarrow}{\smash{t}rc}(Q)\le 2n-3q+k$, and $\overset{\rightarrow}{\smash{t}rc}(Q)=2n-3q+k$ as required.

Next, for the case $k\equiv 1$ (mod $3$), let $k=3\ell-2$, so that $2\le\ell\le q-1$. With such an $\ell$, we let $Q$ be the $(n,q)$-cactus as in the second part of (a). Let $Q'=\bigcup_{i=1}^{\ell+1} H_i$. The path $P$ in (\ref{cactusthm4eq1}) has length $n-2q+2\ell$ in $Q'$, so that by (\ref{eq}), we have $\overset{\rightarrow}{\smash{t}rc}(Q')\ge 2(n-2q+2\ell)-1$. Again (\ref{cactusthm4eq2}) holds, and thus
\[
\overset{\rightarrow}{\smash{t}rc}(Q)\ge [2(n-2q+2\ell)-1]+q-\ell-1=2n-3q+k.
\]

Now we define a total-colouring $c$ of $Q$ as follows. Let $c(v_iw_i)=c(v_iv_{i+1})=i$ for $1\le i\le\ell-1$; $c(xv_1)=c(w_2v_1)=\ell$; $c(w_1)=c(v_\ell)=\ell+1$; $c(uz)=\ell+2$ if $z\in\Gamma^+(u)$; $c(zu)=\ell+3$ if $z\in\Gamma^-(u)$; $c(z)=\ell+4$ if $z\in\Gamma^+(u)$; and $c(z)=c(w_\ell)=\ell+5$ if $z\in\Gamma^-(u)\setminus\{w_1\}$. All remaining arcs and vertices are given further distinct colours. Then, since the colours $1,\dots,\ell+1$ are each used twice, the colours $\ell+2,\ell+3,\ell+4,\ell+5$ are each used $q-\ell+1$ times, and $|V(Q)\cup A(Q)|=2n+q-1$, we have that $c$ uses $2n+q-1-(\ell+1)-4(q-\ell)=2n-3q+k$ colours. Again, it is easy to check that $c$ is a total rainbow connected colouring for $Q$. Thus, $\overset{\rightarrow}{\smash{t}rc}(Q)\le 2n-3q+k$, and $\overset{\rightarrow}{\smash{t}rc}(Q)=2n-3q+k$ as required.

Finally, we consider the case $k\equiv 2$ (mod $3$) with $k\neq 3q-4$. Let $k=3\ell-4$, and note that $3\le\ell\le q-1$. With such an $\ell$, we take $Q_{n,q,\ell}$ and replace the arcs $uv_1$ and $v_1v_2$ with the paths $ux_1v_1$ and $v_1x_2v_2$, and $H_{\ell+1}$ with a copy of $\rC_{n-2q}$. Let $Q$ be the resulting $(n,q)$-cactus, and note that $Q$ can be constructed for $n\ge 2q+3$. Now the path $P$ in (\ref{cactusthm4eq1}) has length $n-2q+2\ell-1$, so that $\overset{\rightarrow}{\smash{t}rc}(Q')\ge 2(n-2q+2\ell-1)-1$ by (\ref{eq}). Again (\ref{cactusthm4eq2}) holds, and thus
\[
\overset{\rightarrow}{\smash{t}rc}(Q)\ge [2(n-2q+2\ell-1)-1]+q-\ell-1=2n-3q+k.
\]

Now we define a total-colouring $c$ of $Q$ as follows. Let $c(x_2v_2)=c(w_3v_2)=1$; $c(v_iw_i)=c(v_iv_{i+1})=i$ for $2\le i\le\ell-1$; $c(x_1v_1)=c(w_2v_1)=\ell$; $c(v_1w_1)=c(v_\ell)=\ell+1$; $c(x_1)=c(w_2)=\ell+2$; $c(x_2)=c(w_\ell)=\ell+3$; $c(uz)=\ell+4$ if $z\in\Gamma^+(u)$; $c(zu)=\ell+5$ if $z\in\Gamma^-(u)$; $c(z)=c(v_1x_2)=\ell+6$ if $z\in\Gamma^+(u)\setminus\{x_1\}$; and $c(z)=\ell+7$ if $z\in\Gamma^-(u)$. All remaining arcs and vertices are given further distinct colours. Then, since the colours $1,\dots,\ell+3$ are each used twice, the colours $\ell+4,\ell+5,\ell+6,\ell+7$ are each used $q-\ell+1$ times, and $|V(Q)\cup A(Q)|=2n+q-1$, we have that $c$ uses $2n+q-1-(\ell+3)-4(q-\ell)=2n-3q+k$ colours. Again, it is easy to check that $c$ is a total rainbow connected colouring for $Q$. Thus, $\overset{\rightarrow}{\smash{t}rc}(Q)\le 2n-3q+k$, and $\overset{\rightarrow}{\smash{t}rc}(Q)=2n-3q+k$ as required.\\[1ex]
\indent(c) If $Q$ is a special path cactus, then by Theorem \ref{cactusthm3}, we have $\overset{\rightarrow}{\smash{t}rc}(Q)=2n-3$. Otherwise, by Theorems \ref{ASMBcactusthm1}, \ref{cactusthm1}, and \ref{cactusthm3}, we have $\overset{\rightarrow}{rc}(Q)\le n-2$ and $\overset{\rightarrow}{rvc}(Q)\le n-3$. Thus by (\ref{rc+rvceq}), we have $\overset{\rightarrow}{\smash{t}rc}(Q)\le (n-2)+(n-3)=2n-5$.
\end{proof}

\section*{Acknowledgements}

Hui Lei and Yongtang Shi are partially supported by National Natural Science Foundation of\, China\, (Nos.~11371021,\, 11771221),\, and\, Natural\, Science\, Foundation\, of\, Tianjin\, (No. 17JCQNJC00300). Henry Liu is partially supported by the Startup Fund of One Hundred Talent Program of SYSU. Henry Liu would also like to thank the Chern Institute of Mathematics, Nankai University, for their generous hospitality. He was able to carry out part of this research during his visit there.

The authors thank the anonymous referees for the careful reading of the manuscript.

\end{document}